\def\R{\mathbb R}
\def\N{\mathbb N}
\newcommand*{\mr}{\rm{mr}}
\def\E{\mathbb E}
\def\ds{\displaystyle\sum}
\def\rank{{\rm rank}}
\def\sr{{\rm sr}}
\def\G{\mathcal{G}}
\def\I{\mathcal I}
\def\int{{\rm int\,}}
\def\B{\mathcal B}
\def\A{\mathcal A}
\def\C{\mathcal C}
\def\M{\mathcal M}
\def\oA{\overline{A}}
\def\det{{\rm det }}
\def\sgn {{\rm sgn}}
\def\eps{\varepsilon}
\def\rank{{\rm rank}}
\theoremstyle{plain}
\newtheorem{theorem}{Theorem}[section]
\theoremstyle{theorem}
\numberwithin{theorem}{section}
\newtheorem{lemma}[theorem]{Lemma}
\newtheorem{proposition}[theorem]{Proposition}
\theoremstyle{definition}
\newtheorem{definition}[theorem]{Definition}
\newtheorem{example}[theorem]{Example}
\newtheorem{notation}[theorem]{Notation}
\begin{document}
\title{An algebra for the propagation of errors in matrix calculus}
\date{}

\author{Nam Van Tran\\ Faculty of Applied Sciences\\
	HCMC University of Technology and Education, Vietnam\\
	namtv@hcmute.edu.vn\\
	\\
	Imme van den Berg\\ Department of Mathematics, University of \'{E}vora,  Portugal\\ ivdb@uevora.pt}
\maketitle
\begin{abstract}
We assume that every element of a matrix has a small, individual error, and model it by an external number, which is the sum of a nonstandard real number and a neutrix, the latter being a convex (external) set having the group property. The algebra of external numbers formalizes common error analysis, with rules for calculation which are a sort of mellowed form of the axioms for real numbers.

We extend the algebra of external numbers to matrix calculus. Many classical properties continue to hold, sometimes stated in terms of inclusion instead of equality. There are notable exceptions, for which we give counterexamples and investigate suitable adaptations. In particular we study addition and multiplication of matrices, determinants, near inverses, and generalized notions of linear independence and rank.

\textbf{Keywords}: matrix calculus, error propagation, independence, rank, external numbers.
	
\textbf{AMS classification}: 03H05, 15A03, 15A09, 15B33, 65F99.
\end{abstract}
%\tableofcontents

\section{Introduction}

In many mathematical models input data and output data are given in the form of vectors and matrices. The data often are imprecise, which may have several origins, for example imperfect knowledge, measurement problems, changes in time, model reductions, rounding off etc. and the imprecisions  can have various sizes. In this article the imprecisions are modelled by (scalar) \emph{neutrices}, which are convex subgroups of the set of nonstandard numbers, most of them are external sets. Then each entry of a matrix is an \emph{external number}, which is the pointwise (Minkowski) sum of a (nonstandard) real number and a neutrix. Every entry has its own individual neutrix, modelling the diversity of imprecisions. The neutrix usually is infinitely small with respect to the real number, and then the external number is called \emph{zeroless}. Examples of neutrices are the external set of infinitesimals $ \oslash $ and the external set $ \pounds  $ of numbers smaller in absolute value than some standard real number, as well as all multiples of them, but there exist other types of neutrices \cite{Koudjeti Van den Berg}. The term neutrix is borrowed from Van der Corput, and we were inspired by his \emph{Ars Neglegendi} \cite{Van der Corput}.

Within the setting of external numbers we study the effects of error propagation in calculations with matrices and determinants. 

We recognize the properties which are supposed to hold for operations in error analysis in the definition of addition and multiplication below, given in terms of the Minkowski operations. We consider the particular case of zeroless external numbers  $\alpha=a+A, \beta=b+B$, where $ a,b $ are real numbers and $A,B$ are two neutrices. Then we have (see also Definition \ref{defnam})

%\begin{enumerate}
	%\item \label{congI} $\alpha+\beta=a+ b+A+B$. 	
	%\item  $-\alpha=-a+A$.
	%\item \label{nhanI}  $\alpha \beta=ab+Ab+Ba$.
	%\item \label{1/alphaI} $\dfrac{1}{\alpha}=\dfrac{1}{a}+\dfrac{A}{a^2}$.	
%\end{enumerate}
\begin{equation}\label{erroranalysis}
\begin{aligned}
	 \alpha+\beta&=a+ b+A+B \\	
	 -\alpha&=-a+A\\
	 	   \alpha \beta&=ab+Ab+Ba\\
	 \dfrac{1}{\alpha}&=\dfrac{1}{a}+\dfrac{A}{a^2}.	
\end{aligned}
\end{equation}
Classical error analysis is more or less informal, for instance the above rules correspond to "provisional rules" for propagation of errors of \cite{Taylor}, to hold approximately and somewhat adhoc, using common sense. In contrast, in terms of external numbers, the equalities of \eqref{erroranalysis} are part of formal mathematics and permit us to prove much more general laws, which lead to the notion of complete arithmetical solid in \cite{dinisberg 2016}. Addition and multiplication satisfy the properties of a completely regular commutative semigroup \cite{Petrich}, and adapted forms of distributivity, order relation, Dedekind completeness and the Archimedean property are shown to hold.

We cannot hope for such strong rules for matrix calculus, still the matrices form a regular commutative semigroup for addition: the usual laws for addition are valid, but the sum of a matrix and its "inverse element for addition" will be a matrix of neutrices, and not the zero-matrix. Also in many cases the common laws for multiplication of matrices do hold.  Problems may appear when multiplying matrices with entries of different sign, in particular when some entries are almost equal in absolute value but opposite, or when the matrix has a small determinant. Still general conditions can be given for algebraic properties to hold, sometimes in the form of inclusions instead of equalities; typically entries should not be \emph{nearly opposite}, a notion defined in Section \ref{neutrices and external numbers}.

We pay special attention to invertibility, linear dependence and independence, and rank.
  
The product of two matrices with non-zero neutrices will usually be a matrix with non-zero neutrices, so it is to be expected, that analogously to the case of addition, generically we will never obtain the identity matrix. We speak of a \emph{near inverse} if we obtain the identity matrix up to neutrices included in $\oslash$. We give conditions for near inverses to exist, in particular in the form of the adjugate matrix. In order to avoid the blow up of neutrices, the determinant should not be too small, though sometimes it may be infinitesimal.

We give a straightforward definition for linear independence of external numbers, and relate it to classical linear independence and dependence of vectors of representatives, i.e vectors of real numbers included in the external vectors.

There are several ways to define the rank of matrices of external numbers, a rank $ r $ coming directly from independence or a \emph{minor rank}, defined by the nonsingularity of minors. In fact a mixed notion \emph{strict rank} happens to be the more operational. We give conditions for its existence, and show that then the rank $ r $ is equal to the minor rank.

The present approach to propagation of errors in matrix calculus is preceded by other studies of matrices and vectors with external numbers, starting with \cite{Immedecompositionofneutrices}, where it was shown that a neutrix in standard dimension $ n $ was the direct sum of $ n $ scalar neutrices, in orthogonal directions. In \cite{Jus} and \cite{Jus2}, determinants and \emph{flexible systems}, i.e systems of linear equations such that the coefficient matrix and constant term contain external numbers, are studied in the case the former is nonsingular. Singular systems are studied in \cite{Nam} and \cite{Nam2}, which contain also an approach to linear programming under uncertainties along these lines.

%{\color{red} In \cite{dinisnam imme} an calculus for external numbers as well as its applications were presented. }

Our approach to treatment of error respects various features of common error analysis. There the errors around a value are represented by a small interval, and  an external number is a convex set, in the form of a real number plus a neutrix; also if the approximated value is non-zero, the size of the neutrix is infinitely small with respect to this value. Furthermore the interval related to the set of errors is somewhat arbitrary, often it is an upper bound and is susceptible to allow for some modification. This arbitrariness of size is reflected by the group property of  neutrices. This makes that the external number satisfies the \emph{Sorites} property: it is invariant under at least some shifts, and also some additions and multiplications. We stress the point that our approach is not functional, which is not always natural for uncertainties; however parameters are allowed, and in  principle the occurrence of a multiple of parameters does not significantly augment the complexity of the calculus.

To our opinion the approach by external numbers respects the uncertainty of error sets, while allowing for a calculus for error propagation exceeding other approaches, as attested by the axioms of a complete arithmetical solid of \cite{dinisberg 2016}.

Many of the approaches are functional, and though perhaps linear operations are preserved, they do not respond very well to multiplication, and do not allow for a proper order relation. The $o(.), O(.)-$notation of classical asymptotics may be interpreted by sets of functions \cite{Bruijn}, allowing for some calculus, but functions may be oscillatory, and as such cannot be ordered. The approach has been generalized by Van der Corput's neutrix calculus. His functional neutrices are groups of functions without unity, but again order is not respected. Functional dependence and absence of order is also a drawback for the Infinit\"{a}rcalc\"{u}l of Du Bois-Reymond, and, although ordered, Hardy-fields suppose conditions of smoothness \cite{Hardy} which are not always natural in the context of error analysis.

Other classical approaches to uncertainties include statistical and stochastical methods \cite{Clifford}, \cite{Sentivity}, \cite{Kall}, fuzzy mathematics \cite{Wierman}, \cite{Kall}, multi-parametric methods, where uncertainties depend on parameters taking values in specific domains \cite{GalNedoma}, \cite{Gollmer}, and methods based on interval calculus in various settings \cite{Neumaier}, \cite{Alefeld}, \cite{Moore}, \cite{Gabrel}. They have their own modelling purposes, random influences in the case of statistics and stochastics, properties which hold partially for the fuzzy approach, while the multi-parametric approach deals with individual treatment of errors and interval calculus manages lower and upper bounds for errors. 

Except for the latter, all are functional methods, which thus are maybe not efficient for advanced error propagation. The set-theoretic calculus of intervals has better properties when applying operations, still, due to problems of subdistributivity and intersection not in all cases simple laws can be given, and algebraic operations need not to respect order. 

Compared to the statistical, stochastical or fuzzy approach, the approach by external numbers, being deterministic with obvious membership relation, is closer to the multi-parametric approach and interval calculus. With both methods it shares the property of individual treatment of errors. However, the well-defined boundaries of intervals and parameter domains harm a strong calculus of error-propagation, which in our approach is overcome by the flexibility of the external numbers, which due to the Sorites property absorb at least some shifts, additions and multiplications. Also our approach to error-propagation starts directly from the straightforward formalization of the rules of error analysis given in \eqref{erroranalysis}. 

The above mentioned classical approaches are of courser easier to implement, still we defend that the calculus of external numbers, vectors and matrices yields insights at a intermediate level between qualitative and quantitative analysis, resulting from direct calculations of moderate complexity.

This article has the following structure. We start by recalling some properties of neutrices and external numbers, in Section \ref{neutrices and external numbers}. In Section \ref{section1c3} we show that almost all common properties of operations on matrices hold for non-negative matrices, and give general conditions for these properties to hold beyond. Section \ref{section2c3} deals with the  determinant and its minors. In section \ref{inverse matrix} we study invertible matrices. In classical algebra a matrix is invertible if and only if it is non-singular and the inverse matrix is represented through the adjugate matrix. In this context, it may not hold, but we will present conditions guaranteeing that a non-singular matrix is (nearly) invertible, with its near-inverse matrix still represented by the adjugate matrix. 
In Section \ref{section3c3} we extend the notions of linear dependence and independence to external vectors. The relationships between linear dependence and linear independence of a set of external vectors as well between external vectors and their representatives  are investigated. In section \ref{section4c3} we study the rank of a matrix with external numbers.  In classical linear algebra the rank of a matrix determined via determinants is equal to the maximum number of independent row vectors, but in  our context this relation is less evident. Different notions of rank are given, called row-rank, minor-rank, and strict rank, the latter taking into account a matrix of representatives. The minor rank is less than or equal to the row rank. However under some conditions we have equality, in particular if the strict rank is well-defined. 

%\section{Preliminaries}\label{preliminaries}

\section{Neutrices and external numbers}\label{neutrices and external numbers}

Neutrices and external numbers are well-defined external sets in the axiomatic $HST$ for nonstandard analysis as given by Kanovei and Reeken in \cite{KanoveiReeken}. This is an extension of a bounded form of Nelson's Internal Set Theory $ IST $ \cite{Nelson}. This theory extends common set theory $ ZFC $ by adding an undefined predicate "standard" to the language of set theory, and three new axioms. Introductions to $ IST $ are contained in e.g. \cite{Dienerreeb}, \cite{Dienernsaip} or \cite{Lyantsekudryk}. An important feature is that infinite sets always have nonstandard elements. In particular nonstandard numbers are already present within $ \R $. \emph{Limited} numbers are real numbers bounded in absolute value by standard natural numbers. Real numbers larger in absolute value than limited numbers are called \emph{unlimited}. Its reciprocals, together with $ 0 $, are called \emph{infinitesimal}. Limited numbers which are not infinitesimal are called \emph{appreciable}.

A {\em (scalar) neutrix} is an additive convex subgroup of the set of nonstandard real numbers $\R$.  Except for $\{0\}$ and $\R$, all neutrices are external sets.   The set of all limited numbers $\pounds$ and the set of all infinitesimals  $\oslash$  are neutrices. Note that $\pounds$ and $\oslash$ are not sets in the sense of $ ZFC $, for they are bounded subsets of $ \R $ without lowest upper bound. 
Let $\varepsilon\in \R$ be a positive infinitesimal. Some other neutrices are  $\varepsilon \oslash, \ \varepsilon\pounds$,  $\displaystyle\bigcap_{st(n)\in \N}[-\varepsilon^n, \varepsilon^n]=\pounds\varepsilon^{\not\infty}$, $\displaystyle\bigcup_{st(n)\in \N}[-e^{-1/(n\eps)}, e^{-1/(n\eps)}]=\pounds e^{-@/\eps}$; here $@  $ denotes the external set of positive appreciable numbers and $ \not\hskip -0.16cm \infty $
the external set of positive unlimited numbers.

An {\em external number}  is the Minkowski-sum of a real number and a neutrix. 
So each external number  has the form $\alpha=a+A=\{a+x|x\in A\}$, where $A$ is called the {\em neutrix part} of $\alpha$, denoted by $N(\alpha)$, and $a\in \R$ is called a {\em representative}  of $\alpha$. If $ N(\alpha)=\{0\} $, we may identify $ \{a\} $ and $ a $. If $0\not\in \alpha=a+N(\alpha)$, we call $\alpha$ {\em zeroless}. 

The collection of all neutrices is not an external set, but a definable class, denoted by $\mathcal{N}$. Also the external  numbers form a class, denoted by $ \E $.

Addition, subtraction, multiplication and division are given by the Minkowski operations below. We list also some elementary properties. For more details on neutrices and external numbers we refer to \cite{ Koudjeti Van den Berg, Immedecompositionofneutrices, Dinis, dinisberg 2016}.

Let $\alpha=a+A, \beta=b+B$ be two external numbers and $A, B$ be two neutrices.
\begin{definition}\label{defnam}
	\begin{enumerate}
		\item \label{cong} $\alpha\pm\beta=a\pm b+A+B$.		
		\item \label{nhan}  $\alpha \beta=ab+Ab+Ba+AB=ab+\max\{aB, bA, AB\}.$
		\item \label{1/alpha} If $\alpha$ is zeroless, $\dfrac{1}{\alpha}=\dfrac{1}{a}+\dfrac{A}{a^2}.$		
		\end{enumerate}
\end{definition}

If $\alpha$ and $ \beta$ are zeroless, in Definition \ref{defnam}.\ref{nhan} we may neglect the neutrix product $ AB $. For division of neutrices $A, B\in \mathcal{N}$ we use the common notation for division of groups
$$ A:B=\{c\in \R\ |\ cB\subseteq A\}.$$
Neutrices are obviously ordered by inclusion. An order relation for all external numbers $ \alpha,\beta $ is given by
\begin{equation*}
\alpha\leq\beta\equiv\forall a\in \alpha \exists b\in \beta(a\leq b).
\end{equation*}
An external number $\alpha$ is called \emph{positive} if $0<x$ for all $x\in \alpha$ and \emph{negative} if $x<0$ for all $x\in \alpha$. The number $\alpha$ is \emph{non-negative} if $0\leq \alpha$, i.e. if there exists $x\in \alpha$ such that $ 0\leq x$ and \emph{non-positive} if $ 0\geq \alpha$; this means there exists $x\in \alpha$ with $0\geq x$. Note that a neutrix is both non-negative and non-positive. The order relation is shown to be compatible with the operations, with some small adaptations \cite{Koudjeti Van den Berg}. 
\begin{proposition}\label{propnam}
	\begin{enumerate}
		\item  \label{tcnt1}$A+B=\max\{A, B\}$. 
		\item \label{tcnt2}  $\pounds A=A.$ 
		\item \label{giao} If $\alpha$ is zeroless, then $\alpha\cap \oslash \alpha=\emptyset.$		
		\item \label{tcoptiv} If $\beta$ is zeroless and $ N $ is a neutrix one has $N\beta=bN$ and $\dfrac{N}{\beta}=\dfrac{N}{b}$.
		\item \label{beta/alpha} If $\beta$ is zeroless, we have $\dfrac{\alpha}{\beta}=\dfrac{\alpha \beta}{b^{2}}=\dfrac{a}{b}+\dfrac{1}{b^2}\max\{aB, bA\}.$
	\end{enumerate}
\end{proposition}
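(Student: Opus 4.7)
The plan is to leverage the fact that a neutrix is a convex additive subgroup of $\R$: convexity plus symmetry under negation forces any two neutrices to be comparable by inclusion (otherwise one could pick $a\in A\setminus B$ and $b\in B\setminus A$ with $|a|\leq|b|$, which by convexity would put $a$ in $B$). With that observation, part \ref{tcnt1} is immediate: assuming $A\subseteq B$, one has $A+B\subseteq B+B=B\subseteq A+B$ by the group property. For part \ref{tcnt2}, given $\ell\in\pounds$ and $a\in A$, I would choose a standard integer $n\geq|\ell|$; then $na\in A$ by the group property, and convexity together with symmetry puts $[-na,na]$, hence $\ell a$, inside $A$. The reverse inclusion is $1\in\pounds$.

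Parts \ref{tcnt1} and \ref{tcnt2} are the workhorses, because each remaining item reduces to an \emph{absorption} step in which a neutrix multiplied by an appreciable factor, or by an infinitesimal of comparable size, is swallowed by a surrounding neutrix. For part \ref{giao}, I expand $\oslash\alpha=\oslash a+\oslash A$ via Definition \ref{defnam}.\ref{nhan} and note $\oslash A\subseteq A$ by part \ref{tcnt2}. If $x$ belonged to $\alpha\cap\oslash\alpha$, equating the two representations would give $a(1-\epsilon)=z-y\in A$ for some $\epsilon\in\oslash$ and $y,z\in A$; dividing by the appreciable quantity $1-\epsilon$ and invoking part \ref{tcnt2} a second time yields $a\in A$, contradicting zerolessness (which amounts to $a\notin A$ since $-A=A$).

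Part \ref{tcoptiv} rests on the same absorption: zerolessness of $\beta$ gives $B\subseteq b\oslash$, so in the expansion $N\beta=Nb+NB$ the term $NB\subseteq Nb\oslash$ is absorbed into $Nb$ by part \ref{tcnt2}, and in $N/\beta=N/b+NB/b^{2}$ the term $NB/b^{2}$ is absorbed into $N/b$ for the same reason. Part \ref{beta/alpha} is then a direct computation: multiplying out $(a+A)(1/b+B/b^{2})$ by Definition \ref{defnam}.\ref{nhan} produces four summands, the cross-term $AB/b^{2}$ is absorbed into $A/b$ exactly as above, and part \ref{tcnt1} rewrites what remains as $a/b+(1/b^{2})\max\{aB,bA\}$; the analogous expansion of $\alpha\beta/b^{2}$ yields the same expression, so both equalities drop out at once.

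The only real obstacle is part \ref{giao}: one must identify the correct appreciable factor to extract---here $1-\epsilon$---so that part \ref{tcnt2} can be wielded to force $a\in A$. Everything else is bookkeeping with the Minkowski definitions of sum and product once parts \ref{tcnt1} and \ref{tcnt2} have been secured.
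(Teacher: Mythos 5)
The paper itself gives no proof of Proposition \ref{propnam}: it is presented as a list of elementary properties with the reader referred to \cite{Koudjeti Van den Berg} and related work, so there is no in-paper argument to compare yours against. Your proof is correct and essentially self-contained. The comparability of any two neutrices by inclusion (convexity plus the group property), together with $\pounds A=A$, does indeed carry parts \ref{tcnt1} and \ref{tcnt2}, and your absorption arguments for parts \ref{giao}, \ref{tcoptiv} and \ref{beta/alpha} are the standard ones and check out: in part \ref{giao} the factor $1-\epsilon$ is appreciable, so dividing lands $a$ in $\pounds A=A$, contradicting zerolessness; in parts \ref{tcoptiv} and \ref{beta/alpha} the cross-terms $NB$, $AB$ are swallowed because $B\subseteq b\oslash$. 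The one ingredient you invoke without justification, namely that zerolessness of $\beta=b+B$ forces $B\subseteq b\oslash$, is precisely Lemma \ref{relAa} of the paper; it is stated there only after the proposition, but no circularity arises, since it follows in one line from your own parts \ref{tcnt1}--\ref{tcnt2} and convexity: if some $x\in B$ had $|x|\geq c|b|$ with $c$ appreciable, then $b\in\pounds B=B$, so $0\in\beta$, a contradiction. With that line added (or a pointer to Lemma \ref{relAa}), the argument is complete; the only other remarks are cosmetic, e.g.\ writing $[-|na|,|na|]$ in part \ref{tcnt2} and noting that $aB$, $bA$ are neutrices so that part \ref{tcnt1} applies in part \ref{beta/alpha}.
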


The usual properties for the algebraic operations are preserved, like associativity and commutativity. Although the distributivity law does not hold, we always obtain subdistributivity as shown in  Part \ref{tcopti1} of Theorem \ref{tcopti}  below; Part \ref{tcopti3} characterizes the validity of distributivity with the help of the notions of \emph{relative uncertainty} and \emph{oppositeness}. Both the definition and the theorem below are from \cite{Dinis}, which contains illustrative examples and proofs.
\begin{definition}
	Let $\alpha=a+A$ and $  \beta=b+B $ be external numbers and $ C $ be a neutrix.
	\begin{enumerate}
		\item The \emph{relative uncertainty} $ R(\alpha) $ of $\alpha$  is defined by $ A/\alpha $ if $\alpha $ is zeroless, otherwise $ R(\alpha)=\R $.
		\item $\alpha$ and $  \beta$ are \emph{opposite} with respect to $ C $ if $ (\alpha+\beta)C\subset \max (\alpha C, \beta C). $
	\end{enumerate}
\end{definition}

\begin{lemma}\label{relAa}Let $\alpha=a+A$ be zeroless. Then $R(\alpha)=A/a\subseteq \oslash.$	
\end{lemma}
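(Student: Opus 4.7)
The plan is to split the lemma into two assertions: the equality $R(\alpha)=A/a$ and the inclusion $A/a\subseteq\oslash$.

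For the first equality, since $\alpha$ is zeroless, by definition $R(\alpha)=A/\alpha$. I would then apply Proposition \ref{propnam}.\ref{tcoptiv} directly with $N=A$ and $\beta=\alpha$, which yields $A/\alpha=A/a$. This reduces the problem to proving that $A/a\subseteq\oslash$, where $A/a=\{x/a:x\in A\}$ is the neutrix obtained by scaling $A$ by $1/a$.

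For the inclusion, I would argue by contradiction. Fix $x\in A$ and suppose $x/a$ is not infinitesimal. Then there exists a standard positive integer $n$ such that $|x/a|\geq 1/n$, equivalently $|a|\leq n|x|$. Since $A$ is a (convex) additive subgroup of $\R$, we have $nx\in A$ and $-nx\in A$. By convexity, the whole interval $[-n|x|,n|x|]$ is contained in $A$, and since $|a|\leq n|x|$ this forces $a\in A$. But then $\alpha=a+A=A$, so $0\in\alpha$, contradicting the zerolessness of $\alpha$. Hence every element $x/a$ with $x\in A$ is infinitesimal, giving $A/a\subseteq\oslash$.

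The argument is essentially a one-step consequence of the three defining features of a neutrix (additive subgroup, convexity, containment of $0$) together with zerolessness, so there is no real obstacle. The only subtle point worth stating carefully is the transition from $a\in A$ to $0\in\alpha$: this relies on the group identity $a+A=A$ whenever $a\in A$, which I would either invoke directly or note in passing. Everything else is bookkeeping and a single appeal to Proposition \ref{propnam}.\ref{tcoptiv}.
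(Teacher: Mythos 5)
Your proof is correct. Note that the paper itself gives no proof of Lemma \ref{relAa} (it is stated as a known fact from the external-numbers literature, cf.\ \cite{Koudjeti Van den Berg}, \cite{Dinis}), so there is nothing to compare against; your two-step argument — reducing $A/\alpha$ to $A/a$ via Proposition \ref{propnam}.\ref{tcoptiv}, then using the subgroup and convexity properties of $A$ together with zerolessness of $\alpha$ to rule out a non-infinitesimal quotient — is exactly the standard argument and is complete, including the key observation that $a\in A$ would force $\alpha=a+A=A\ni 0$.
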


\begin{lemma}\label{relativeprecision} 
	Let $n\in \N$ be standard and $\alpha_{1},\dots, \alpha_n$ be external numbers. Let $\lambda=\alpha_1\cdots \alpha_n$. Then $R(\lambda)=\max\limits_{1\leq i\leq n}R(\alpha_i)$.
\end{lemma}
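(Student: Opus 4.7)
The plan is to use external induction on the standard natural number $n$, reducing the substantive content to the case $n=2$.

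First I would dispose of the degenerate case in which some $\alpha_i$ is not zeroless: then $0\in\alpha_i$, and choosing representatives of the remaining factors one sees $0\in\lambda$, so $R(\lambda)=\R$; simultaneously $R(\alpha_i)=\R$ forces $\max_{j}R(\alpha_j)=\R$, and the equality is trivial. Thus I may assume each $\alpha_i=a_i+A_i$ is zeroless, in which case Lemma~\ref{relAa} gives $R(\alpha_i)=A_i/a_i\subseteq\oslash$.

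For the base case $n=2$, write $\alpha=a+A$, $\beta=b+B$. Definition~\ref{defnam}.\ref{nhan} gives $\alpha\beta=ab+Ab+Ba+AB$, and the inclusions $A\subseteq a\oslash$ and $B\subseteq b\oslash$ imply that each summand of the neutrix part lies in $ab\oslash$; in particular $\alpha\beta$ is zeroless. Proposition~\ref{propnam}.\ref{tcoptiv} then permits division of each piece by $ab$, yielding
\[
R(\alpha\beta)=\frac{A}{a}+\frac{B}{b}+\frac{AB}{ab}=R(\alpha)+R(\beta)+R(\alpha)R(\beta).
\]
Proposition~\ref{propnam}.\ref{tcnt1} collapses $R(\alpha)+R(\beta)$ to $\max(R(\alpha),R(\beta))$, and for the cross term, since $R(\alpha),R(\beta)\subseteq\oslash\subseteq\pounds$, Proposition~\ref{propnam}.\ref{tcnt2} gives $R(\alpha)R(\beta)\subseteq \pounds R(\alpha)=R(\alpha)$ and, symmetrically, $\subseteq R(\beta)$. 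Hence $R(\alpha)R(\beta)\subseteq\max(R(\alpha),R(\beta))$, and the whole sum equals $\max(R(\alpha),R(\beta))$.

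The general statement then follows by external induction on the standard parameter $n$: applying the $n=2$ identity to the pair $\alpha_1\cdots\alpha_{n-1}$ and $\alpha_n$, with the former zeroless and of relative uncertainty $\max_{i\le n-1}R(\alpha_i)$ by the induction hypothesis, produces $R(\lambda)=\max_{1\le i\le n}R(\alpha_i)$. The only genuine obstacle is the behaviour of the cross term $AB/(ab)$, but it is precisely the absorption property of Proposition~\ref{propnam}.\ref{tcnt2} — that a set of limited numbers multiplying a neutrix is swallowed back by that neutrix — which guarantees this term disappears into the maximum; without such absorption one would obtain only an inclusion instead of the claimed equality.
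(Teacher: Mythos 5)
Your proof is correct, and it is organized differently from the paper's. The paper argues in one shot for general $n$: it expands the full product multinomially, groups the neutricial terms by their order $p$ (the number of neutrix factors $A_{i_1}\cdots A_{i_p}$), observes that the first-order terms contribute exactly $\sum_i R(\alpha_i)=\max_i R(\alpha_i)$, and then shows every higher-order contribution $R(\mu_p)$ is absorbed into this maximum because each $R(\alpha_i)\subseteq\oslash$. You instead isolate the two-factor identity $R(\alpha\beta)=R(\alpha)+R(\beta)+R(\alpha)R(\beta)$, collapse it to $\max(R(\alpha),R(\beta))$ via Proposition~\ref{propnam}.\ref{tcnt1} and the absorption $\pounds N=N$ of Proposition~\ref{propnam}.\ref{tcnt2}, and then run external induction on the standard parameter $n$ — the same device the paper uses for Proposition~\ref{sumexternalnumber}. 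The substance is identical (both reduce to the facts $R(\alpha_i)\subseteq\oslash$ from Lemma~\ref{relAa}, $A+B=\max\{A,B\}$, and absorption of the cross terms), but your route avoids the multinomial bookkeeping and the slightly awkward auxiliary numbers $\mu_p$, and it makes explicit two points the paper leaves implicit: that the product of zeroless external numbers is again zeroless (so that dividing by the representative $ab$ via Proposition~\ref{propnam}.\ref{tcoptiv} is legitimate), and that the degenerate case reduces to $0\in\lambda$. The price is that you must invoke associativity of multiplication and the external induction principle, both of which the paper has already licensed, so the argument stands.
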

\begin{proof} Let $\alpha_i=a_i+A_i.$ If $ \max\limits_{1\leq i\leq n}R(\alpha_i)=\R$, there exists $i_0\in \{1,\dots, n\}$ such that $\alpha_{i_0}$ is a neutrix. It follows that $\lambda$ is a neutrix. Hence $R(\lambda)=\R=\max\limits_{1\leq i\leq n}R(\alpha_i).$ Otherwise, 
\begin{alignat*}{2}\lambda=& a_1\dots a_n +\ds_{\substack{p=1}}^n \sum\limits_{1\leq i_1<\cdots<i_p\leq n} \big(A_{i_1}\dots A_{i_p} \Pi_{j\in J} a_j\big)\\
= & a_1\dots a_n +A_1a_2\dots a_n+\cdots+A_n a_1\dots a_{n-1}\\
+&\ds_{\substack{p=2}}^n \sum\limits_{1\leq i_1<\cdots<i_p\leq n} \big(A_{i_1}\dots A_{i_p} \Pi_{j\in J} a_j\big),
\end{alignat*}  where $J=\{1,\dots, n\}\setminus \{i_1,\dots, i_p\}.$

For each $p\in \{2,\dots, n\}$ and $i_1,\dots, i_p\in \{1,\dots,n\}$ put $\mu_p=a_1\cdots a_n+ \sum\limits_{1\leq i_1<\cdots<i_p\leq n}\big(A_{i_1}\dots A_{i_p} \Pi_{j\in J} a_j\big)$. Then $$R(\lambda)=\ds_{i=1}^n R(\alpha_i)+\sum_{p=2}^n  R(\mu_p)=\max\limits_{1\leq i\leq n}R(\alpha_i)+\sum_{p=2}^n  R(\mu_p).$$ Because $R(\alpha_i)\subseteq \oslash$, we derive that $R(\alpha_{i_1})\cdots R(\alpha_{i_p})\leq \max\limits_{1\leq i\leq n} R(\alpha_i)$ for all $p\in \{2,\dots, n\}$ and $i_1,\dots, i_p\in \{1,\dots, n\}$. As a result $R(\mu_p)\leq \max\limits_{ 1\leq i\leq n} R(\alpha_i).$ So $\ds_{p=2}^n   R(\mu_p)\leq \ds_{p=2}^n   \max\limits_{1\leq i\leq n} R(\alpha_i)=\max\limits_{1\leq i\leq n} R(\alpha_i).$ Hence  $R(\lambda)=\max\limits_{1\leq i\leq n}R(\alpha_i)$.
\end{proof}

\begin{theorem}\label{tcopti}Let $\alpha=a+A, \beta=b+B, \gamma=c+C$ be external numbers. Then
	\begin{enumerate}
		\item \text{\rm (Subdistributivity)} \label{tcopti1} $(\alpha+\beta)\gamma\subseteq\alpha \gamma+\beta \gamma$.  
		\item \text{\rm (Distributivity with correction term)} $\alpha \gamma+\beta \gamma=(\alpha+\beta)\gamma+C \alpha+C \beta$.   \label{tcopti2}
		\item \text{\rm (Criterium for distributivity)}\label{tcopti3} $\alpha \gamma+\beta \gamma=(\alpha+\beta)\gamma$ if and only if    
		$R(\gamma)\subseteq \max( R(\alpha), R(\beta))$, or  $\alpha $ and  $\beta $ are not opposite with respect to $ C $. 
	\end{enumerate}	
\end{theorem}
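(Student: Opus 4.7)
The plan is to prove all three parts by expanding both sides using the Minkowski definition of multiplication and exploiting two observations: first, that for a real scalar $c$ and a neutrix $A$ the identity $(A+B)c = Ac + Bc$ holds exactly, whereas for real scalars $a,b$ and a neutrix $C$ one only has the inclusion $(a+b)C \subseteq aC + bC$ (strict when $a,b$ are nearly opposite), and second, the fact from Proposition \ref{propnam}.\ref{tcnt1} that neutrix sum equals max, together with the fact that convex subgroups of $\R$ are linearly ordered by inclusion, so any two neutrices (and in particular $R(\alpha), R(\beta), R(\gamma)$) are comparable.

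For Part \ref{tcopti1}, I would simply expand
\[
(\alpha+\beta)\gamma = (a+b)c + (a+b)C + (A+B)c + (A+B)C,
\]
compare with $\alpha\gamma + \beta\gamma = (a+b)c + aC + bC + Ac + Bc + AC + BC$, and use $(a+b)C \subseteq aC + bC$ and $(A+B)C = AC + BC$ to conclude the inclusion. For Part \ref{tcopti2}, I would compute $C\alpha + C\beta = aC + bC + CA + CB$ by Definition \ref{defnam}.\ref{nhan} and observe that adding this to $(\alpha+\beta)\gamma$ upgrades $(a+b)C$ to $aC + bC$, producing exactly $\alpha\gamma + \beta\gamma$. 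Both identities are straightforward bookkeeping.

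For Part \ref{tcopti3}, Part \ref{tcopti2} reduces the distributivity equality to the inclusion $C\alpha + C\beta \subseteq N((\alpha+\beta)\gamma)$, i.e.\
\[
aC + bC \subseteq (a+b)C + Ac + Bc + AC + BC.
\]
If $\alpha,\beta$ are not opposite with respect to $C$, then $(\alpha+\beta)C = \max(\alpha C, \beta C)$, which unfolds to $(a+b)C + AC + BC = aC + bC + AC + BC$, giving the required inclusion at once. If instead $R(\gamma) \subseteq \max(R(\alpha), R(\beta))$, I would assume without loss of generality $R(\gamma) \subseteq R(\alpha)$, which via $A = aR(\alpha)$ and $C = cR(\gamma)$ translates to $aC \subseteq Ac$; the term $bC$ is then handled by case analysis on $b/a$: when $b/a$ is appreciable we get $bC = aC \subseteq Ac$ from $\pounds C = C$, when $b/a$ is infinitesimal we get $bC \subseteq aC \subseteq Ac$, and when $b/a$ is unlimited then $(a+b)/b$ is appreciable so $(a+b)C = bC$, which is absorbed by the correction term.

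For the converse direction I would argue the contrapositive: if $\alpha,\beta$ are opposite with respect to $C$ and $R(\gamma) \not\subseteq \max(R(\alpha), R(\beta))$, then by comparability of neutrices each of $(a+b)C$, $Ac$, $Bc$, $AC$, $BC$ is strictly contained in $aC + bC$; since a Minkowski sum of neutrices is the maximum, their sum remains strictly inside $aC + bC$, so the inclusion fails. The main technical obstacle is the forward case $R(\gamma) \subseteq \max(R(\alpha), R(\beta))$, where one must bound $bC$ even though $R(\gamma)$ need not be contained in $R(\beta)$; the trick is to route the estimate either through $Ac$ (when $|b| \leq |a|$ up to appreciable factor) or through $(a+b)C$ (when $|b|$ dominates $|a|$), using that scaling a neutrix by an appreciable number leaves it invariant.
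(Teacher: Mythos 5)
Your argument is essentially correct, but note that this paper does not prove Theorem \ref{tcopti} at all: it is quoted from the reference \cite{Dinis}, so there is no in-paper proof to match, and what you have written is a self-contained substitute. Your route is the natural one and checks out: Parts \ref{tcopti1} and \ref{tcopti2} are indeed pure bookkeeping from $(A+B)c=Ac+Bc$, $(A+B)C=AC+BC$ and $(a+b)C\subseteq aC+bC$; and reducing Part \ref{tcopti3} via Part \ref{tcopti2} to the neutrix inclusion $aC+bC\subseteq (a+b)C+Ac+Bc+AC+BC$ is exactly the right lever, since adding the neutrix $C\alpha+C\beta$ to $(\alpha+\beta)\gamma$ changes nothing precisely when it is contained in $N\big((\alpha+\beta)\gamma\big)$. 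The sufficiency of ``not opposite'' and the contrapositive for necessity (oppositeness gives $(a+b)C, AC, BC\subset \max(aC,bC)$, while $\max(R(\alpha),R(\beta))\subset R(\gamma)$ gives $Ac\subset aC$ and $Bc\subset bC$, and the max of finitely many neutrices is one of them) are sound.

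Two small points you should make explicit. First, the translations $A=aR(\alpha)$ and $C=cR(\gamma)$ are only valid when $\alpha$ and $\gamma$ are zeroless; in the sufficiency direction the condition $R(\gamma)\subseteq\max(R(\alpha),R(\beta))$ is also satisfied vacuously when $\alpha$ or $\beta$ is neutricial (where $R=\R$), and that case must be disposed of separately --- it is easy, since for neutricial $\alpha$ one has $(\alpha+\beta)\gamma=\alpha\gamma+\beta\gamma$ directly from $(A+B)c=Ac+Bc$ and $(A+B)C=AC+BC$; in the necessity direction the hypothesis $R(\gamma)\not\subseteq\max(R(\alpha),R(\beta))$ automatically forces $\alpha,\beta$ zeroless, which is what legitimizes $Ac\subset aC$, $Bc\subset bC$ there. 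Second, in the case $b/a$ unlimited the containment of $bC$ is through the term $(a+b)C$ of $N\big((\alpha+\beta)\gamma\big)$ (as you say in your closing sentence), not through the correction term $C\alpha+C\beta$; the phrase ``absorbed by the correction term'' is a slip, though the intended estimate $bC=(a+b)C\subseteq N\big((\alpha+\beta)\gamma\big)$ is correct. With these clarifications your proof is complete and is in the same spirit as the proof in the cited source.
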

\begin{proposition}\label{sumexternalnumber}
	Let $n\in \N$ be standard, $\alpha, \beta_1,\dots, \beta_n$ be external numbers. If $R(\alpha)\leq \min\limits_{1\leq i\leq n} R(\beta_i)$, we have $\alpha(\beta_1+\cdots+\beta_n)=\alpha\beta_1+\cdots+\alpha\beta_n$.
\end{proposition}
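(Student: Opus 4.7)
The plan is to proceed by induction on $n$, with the base case $n = 2$ as the crux and the induction step reducing to it by grouping the first $n$ summands.

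For $n = 2$, I would invoke Theorem \ref{tcopti}.\ref{tcopti3} with the roles of $(\alpha,\beta,\gamma)$ played by $(\beta_1,\beta_2,\alpha)$: the equality $\alpha\beta_1+\alpha\beta_2=\alpha(\beta_1+\beta_2)$ holds as soon as $R(\alpha)\subseteq\max(R(\beta_1),R(\beta_2))$. Since the order on neutrices is inclusion, the hypothesis $R(\alpha)\leq\min(R(\beta_1),R(\beta_2))$ forces $R(\alpha)\subseteq R(\beta_i)$ for $i=1,2$, hence $R(\alpha)\subseteq\max(R(\beta_1),R(\beta_2))$ \emph{a fortiori}. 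The case $n=1$ is trivial.

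For the induction step from $n$ to $n+1$, set $\sigma_n=\beta_1+\cdots+\beta_n$ with representative $s=b_1+\cdots+b_n$. I would apply the $n=2$ case to $\alpha(\sigma_n+\beta_{n+1})$ and then the induction hypothesis to $\alpha\sigma_n$, so that $\alpha(\sigma_n+\beta_{n+1})=\alpha\sigma_n+\alpha\beta_{n+1}=\sum_{i=1}^{n+1}\alpha\beta_i$. This requires verifying $R(\alpha)\leq\min(R(\sigma_n),R(\beta_{n+1}))$; the second inequality is part of the assumption. For the first I distinguish whether $\sigma_n$ is zeroless: if $\sigma_n$ is a neutrix, then $R(\sigma_n)=\R$ and the inclusion is trivial, while if $\sigma_n$ is zeroless, Proposition \ref{propnam}.\ref{tcnt1} gives $N(\sigma_n)=\max_{1\leq i\leq n}N(\beta_i)$ and therefore $R(\sigma_n)=\max_i N(\beta_i)/s$. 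Using $|s|\leq n\max_i|b_i|$, the identity $nA=A$ for standard $n$ (which follows from Proposition \ref{propnam}.\ref{tcnt2}), and the componentwise inclusions $A|b_i|\subseteq|a|N(\beta_i)$ encoded in the hypothesis $R(\alpha)\subseteq R(\beta_i)$, I would conclude $A|s|\subseteq|a|\max_i N(\beta_i)$, which rearranges to $R(\alpha)\subseteq R(\sigma_n)$.

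The main obstacle is not the base case but this verification that the hypothesis transfers under forming the partial sum $\sigma_n$; here the standardness of $n$ is crucial, for otherwise the factor of $n$ arising from the triangle inequality $|s|\leq n\max_i|b_i|$ could inflate $A$ beyond $\max_i N(\beta_i)$ and the induction would break. Edge cases in which some $\beta_i$ or $\alpha$ is itself a neutrix are harmless: the contribution to $s$ from a neutrix $\beta_i$ vanishes, and $R$ equals $\R$ on neutrices, which is the largest element under inclusion and so does not affect the minimum.
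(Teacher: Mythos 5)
Your proof is correct and takes essentially the same route as the paper, whose proof is precisely induction on standard $n$ combined with Theorem~\ref{tcopti}.\ref{tcopti3}. Two small remarks: the verification that $R(\alpha)\leq R(\sigma_n)$ is superfluous, since in the induction step one may apply Theorem~\ref{tcopti}.\ref{tcopti3} directly to the pair $(\sigma_n,\beta_{n+1})$ with multiplier $\alpha$, for which only $R(\alpha)\subseteq\max\left(R(\sigma_n),R(\beta_{n+1})\right)$ is required and this already follows from $R(\alpha)\subseteq R(\beta_{n+1})$; and because the statement is external, the induction over standard $n$ should formally be the External Induction principle of Nelson, as the paper's proof indicates.
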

\begin{proof}
	It is proved by using External Induction \cite{Nelson} and Theorem \ref{tcopti}.\ref{tcopti3}.
\end{proof}
Obviously distributivity holds if $ \alpha $ and $ \beta $ are of the same sign, say if we are always working with positive numbers or non-negative numbers. The following generalization may have some practical value. 

\begin{definition}\label{defnearop}
Two zeroless elements $ \alpha, \beta \in \E$ are \emph{nearly opposite} if $\alpha/\beta\subseteq  -1+\oslash$.
\end{definition} 
 For example, a real number $ b\simeq 1 $ and $ -1 $ are nearly opposite, but two different standard real numbers are not nearly opposite.
\begin{proposition}\label{notnearlyopposite}
	Let $ \alpha,\beta, \gamma \in \E$ such that $ \alpha$  and $ \beta $ are not nearly opposite. Then $ (\alpha+\beta)\gamma=\alpha\gamma +\beta\gamma $.
\end{proposition}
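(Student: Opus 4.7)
The strategy is to apply Theorem~\ref{tcopti}.\ref{tcopti3}: distributivity holds as soon as $\alpha$ and $\beta$ are not opposite with respect to $C:=N(\gamma)$, that is, as soon as $(\alpha+\beta)C=\max(\alpha C,\beta C)$. Writing $\alpha=a+A$, $\beta=b+B$ and $\gamma=c+C$ and expanding the Minkowski products via Definition~\ref{defnam}, one obtains $(\alpha+\beta)C=(a+b)C+AC+BC$ and $\max(\alpha C,\beta C)=aC+bC+AC+BC$, so non-oppositeness reduces to the single scalar identity $(a+b)C=aC+bC$.

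Suppose first that both $\alpha$ and $\beta$ are zeroless. Using Proposition~\ref{propnam}.\ref{beta/alpha} and Lemma~\ref{relAa}, a routine computation shows that the neutrix part of $\alpha/\beta=a/b+\max\!\bigl((a/b)R(\beta),A/b\bigr)$ is automatically contained in $\oslash$ whenever $a/b$ is appreciable, so $\alpha/\beta\subseteq -1+\oslash$ is equivalent to the real condition $a/b\simeq -1$. The hypothesis "not nearly opposite" therefore becomes $a/b\not\simeq -1$, i.e.\ $(a+b)/b\notin\oslash$. Distinguishing the subcases in which $a/b$ is appreciable, infinitesimal, or unlimited, one verifies that $|a+b|$ and $\max(|a|,|b|)$ differ by an appreciable, nonzero factor; using the absorption $\lambda C=C$ for appreciable $\lambda$, this yields $(a+b)C=\max(|a|,|b|)C=aC+bC$, as required, and Theorem~\ref{tcopti}.\ref{tcopti3} closes this case.

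It remains to treat the boundary case in which at least one of $\alpha,\beta$ is not zeroless, where Definition~\ref{defnearop} does not strictly apply and the hypothesis is vacuously satisfied. The conclusion is then verified by direct expansion: if, for instance, $\alpha=A$ is a pure neutrix, Definition~\ref{defnam} yields both $(\alpha+\beta)\gamma$ and $\alpha\gamma+\beta\gamma$ equal to $bc+\max(bC,cA,cB,AC,BC)$, and the subcase in which both $\alpha$ and $\beta$ are neutrices is analogous and even simpler.

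The main technical point is the translation of $\alpha/\beta\subseteq -1+\oslash$ into the purely real condition $a/b\simeq -1$: one must observe that the neutrix component of the quotient is automatically swallowed by $\oslash$ when $a/b$ is appreciable, a fact that relies on Lemma~\ref{relAa} together with the formula of Proposition~\ref{propnam}.\ref{beta/alpha}. Once this reduction is in place, the remaining work is absorption-style bookkeeping with Minkowski sums of neutrices and the identity $\lambda C=C$ for appreciable $\lambda$, and the distributivity criterion of Theorem~\ref{tcopti}.\ref{tcopti3} finishes the argument.
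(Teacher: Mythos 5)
Your proof is correct, and it reaches the conclusion by a somewhat different organization than the paper's. Both arguments ultimately rest on the non-oppositeness branch of Theorem~\ref{tcopti}.\ref{tcopti3}, but the paper splits on $\gamma$: when $\gamma=c+C$ is zeroless it factors $\gamma=c(1+R(\gamma))$ and applies the criterion directly, and when $\gamma=C$ is a neutrix it assumes without loss of generality $|\alpha|\leq|\beta|$ and uses the hypothesis to get $1+R(\beta)+\frac{\alpha}{b}\subset @$, whence $(\alpha+\beta)C\subseteq b@C=\beta C=\max(\alpha C,\beta C)=\alpha C+\beta C$. You instead split on $\alpha,\beta$ (both zeroless versus at least one neutricial, which is exhaustive because a non-zeroless external number is a neutrix) and verify the oppositeness criterion uniformly for every $\gamma$ by reducing it, via $\max\{M,N\}=M+N$ for neutrices, to the representative-level identity $(a+b)C=aC+bC$, which you derive from $a/b\not\simeq -1$. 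Your explicit translation of Definition~\ref{defnearop} into the condition $a/b\not\simeq -1$, using Proposition~\ref{propnam}.\ref{beta/alpha} and Lemma~\ref{relAa} to see that the neutrix part of $\alpha/\beta$ is absorbed by $\oslash$ when $a/b$ is appreciable, is precisely the point the paper leaves implicit when it asserts that $\alpha$ and $\beta$ are not opposite with respect to $R(\gamma)$ in the zeroless case; so your route is more uniform (no case distinction on $\gamma$) and more explicit about where the hypothesis enters, at the cost of some Minkowski bookkeeping, while the paper's neutricial case trades that bookkeeping for the absorption estimate through the appreciable numbers $@$. Your direct expansion in the degenerate case ($\alpha$ or $\beta$ a neutrix), giving $bc+\max(bC,cA,cB,AC,BC)$ on both sides, checks out and matches the paper's remark that distributivity always holds there.
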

\begin{proof}
If $ \gamma=c+C $ is zeroless, $ R(\gamma)\subseteq \oslash $, hence $ \alpha$  and $ \beta $ are not  opposite with respect to $R(\gamma)  $. Hence, applying Theorem~\ref{tcopti}.\ref{tcopti3}, $$(\alpha+\beta)\gamma=(\alpha+\beta)c(1+R(\gamma))=\alpha c(1+R(\gamma))+\beta c(1+R(\gamma))=\alpha\gamma +\beta\gamma. $$
Assume now that $ \gamma=C $. We have always distributivity if $ \alpha $ or $ \beta $ is neutricial. In case both are zeroless, we may suppose that $ |\alpha|\leq |\beta| $. Then with $ \beta=b+B $ we have $ \left|\frac{\alpha}{b}\right| \leq 1+\oslash$ and $ 1+\frac{\alpha}{b}\subset@ $, hence also $ 1+R(\beta)+\frac{\alpha}{b}\subset@  $, so by Lemma~\ref{relAa} and Theorem~\ref{tcopti}.\ref{tcopti3}
$$ (\alpha+\beta)C=b(1+R(\beta)+\frac{\alpha}{b}) C\subseteq b@C=\beta C=\max(\alpha C,\beta C)=\alpha C+\beta C.$$
We conclude that distributivity holds in each case.
\end{proof}
\begin{definition}\label{neutrix part 2} 
	Let $N$ be a neutrix and $\alpha$ be an external number. The external number $\alpha$ is called an \emph{absorber} of  $N$ if $\alpha N\subset N,$ and an \emph{exploder} of $N$ if $N\subset \alpha N.$ 
\end{definition}
We have $tA=A$ for all $|t|\in @$, so appreciables are neither absorbers nor exploders. Infinitesimals are absorbers of $ \pounds $ and $ \oslash $, and unlimited numbers are exploders of these neutrices. Observe that if  $\varepsilon\in \R$ is a positive infinitesimal, it is not an absorber of  $\pounds\varepsilon^{\not\infty}$, nor of $\pounds e^{-@/\eps}$, and its reciprocal $ 1/\varepsilon $ is not an exploder for these neutrices.

\section{Matrices with external numbers}\label{section1c3}
In this section operations on matrices with external numbers are studied. We start with addition, which satisfies the rules of a regular commutative semigroup. Then we turn to the algebra of scalar multiplication respectively matrix multiplication, which may give rise to inclusions instead of equalities, in particular as regards associativity and distributivity. We present conditions to guarantee equalities.

We will consider matrices of the form 

$$\A=\begin{pmatrix}
\alpha_{11} & \alpha_{12}& \cdots & \alpha_{1n}\\
\vdots & \vdots&  \ddots & \vdots \\
\alpha_{m1} & \alpha_{m2} & \cdots & \alpha_{mn}
\end{pmatrix},$$
where $m,n\in \N$ and $\alpha_{ij}\in \E$ for $ 1\leq i\leq m, 1\leq j\leq n$; the natural numbers $ m,n $	are always supposed to be standard. We use the common notation $\A=(\alpha_{ij})_{m\times n}$. The \emph{transpose} of the matrix $\A$ is defined by $ \A^T=(\nu_{ij})_{n\times m}$ with $\nu_{ij}=\alpha_{ji}$ for all $i=1,\dots, n; j=1,\dots, m$. 
 
 A matrix $\A=(\alpha_{ij})_{m\times n}$ is called \emph{neutricial} if all elements of $\A$ are neutrices, and \emph{zeroless} if all of its entries are zeroless.
 If $ \alpha_{ij}= a_{ij}+A_{ij}$ for all $ 1\leq i\leq m, 1\leq j\leq n$, the matrix $ (a_{ij})_{m\times n} $ is called a \emph{matrix of representatives} and $ N(\A):=(A_{ij})_{m\times n} $ the \emph{associated neutricial matrix}. We denote by $\M_{m,n}(F)$ the set of all $m\times n$ matrices over $F$, where $F$ is either $\R$ or $\E$. When $m=n$ we simply write $\M_n(F)$.

For matrices $\A=(\alpha_{ij})_{n\times n}\equiv (a_{ij}+A_{ij})_{n\times n}\in \M_{n}(\E)$ we define 

$$\overline{A}=\max\limits_{\substack{1\leq i, j\leq n}} A_{ij}, \underline{A}=\min\limits_{\substack{ 1\leq i, j\leq n}} A_{ij}, |\overline{\alpha}|=\max\limits_{\substack{1\leq i,j\leq n}} \left| \alpha_{ij}\right|, |\underline{\alpha}|=\min\limits_{\substack{ 1\leq i, j\leq n}} |\alpha_{ij}|.$$
Operations on  $\M_{m, n}(\E)$ are defined similarly as in classical linear algebra. Let $\A=(\alpha_{ij})_{m\times n}\in \M_{m,n}(\E), \B=(\beta_{ij})_{m\times n}\in \M_{m,n}(\E), \C=(\gamma_{ij})_{n\times p} \in \M_{n,p}(\E) $ and $\lambda\in \E$. Then

$$\A+\B=(\alpha_{ij}+\beta_{ij})_{m\times n}$$
$$ \lambda\A=(\lambda\alpha_{ij})_{m\times n}$$
$$\A\C=(\mu_{ij})_{m\times p}$$ with $\mu_{ij}=\ds_{k=1}^n \alpha_{ik}\gamma_{kj}$ for all $i=1,\dots, m, j=1,\dots, p.$ 

The associative law and commutative law for addition hold for external numbers, hence also for matrices. This makes $\M_{m, n}(\E)$ a commutative semigroup for addition. Let $\A\in\M_{m, n}(\E)$. Then $ \A+(N(\A)) =\A$, and $ \A+(-\A) =N(\A)$. Hence $ \A+(-\A+\A) =\A$, so the commutative semigroup $\M_{m, n}(\E)$ is regular. If also $\mathcal{O}\in\M_{m, n}(\E)$ is a neutrix and $ \A+\mathcal{O}=A $, then $ N(\A)+\mathcal{O} =N(\A)$; note that $ N(\A) $ is in a sense a maximal individualized neutral element, because $ \mathcal{O}_{ij}\subseteq N(\A)_{ij} $ for all $i=1,\dots, n; j=1,\dots, m$. Next proposition resumes the above observations.

\begin{proposition}\label{algmat}
Let $\A, \B, \C\in \M_{m, n}(\E)$. Let $\mathcal{O}\in \M_{m, n}(\E)$ be neutricial. Then 
\begin{enumerate}
	\item $\A+(\B+\C)=(\A+\B)+C$.
	\item $\A+\B=\B+\A$
	\item $\A+ \mathcal{O}=\A$ if and only if $\mathcal{O}_{ij}\subseteq (N(\A))_{ij}$ for all $i=1,\dots, n, j=1,\dots, m$.
	\item $\A + (-\A)=N(\A)$.	
\end{enumerate}
As a consequence $ \M_{m, n}(\E) $ is a commutative regular semigroup for addition.
\end{proposition}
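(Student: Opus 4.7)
The plan is to reduce each clause of the proposition to the corresponding entry-wise fact about external numbers, since all matrix operations here are defined componentwise. Parts 1 and 2 follow immediately from the associativity and commutativity of addition on $\E$, which are built into the definition of the Minkowski sum; no interaction between coordinates occurs, so nothing needs to be said beyond noting that these properties are inherited entry by entry.

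For Part 3, the key tool is Proposition \ref{propnam}.\ref{tcnt1}, which asserts $A+B=\max\{A,B\}$ for neutrices. Writing $\alpha_{ij}=a_{ij}+A_{ij}$ and $\mathcal{O}_{ij}$ for the (neutricial) entries of $\mathcal{O}$, one has
$$(\A+\mathcal{O})_{ij} = a_{ij} + A_{ij} + \mathcal{O}_{ij} = a_{ij}+\max\{A_{ij},\mathcal{O}_{ij}\},$$
and this equals $\alpha_{ij}=a_{ij}+A_{ij}$ precisely when $\max\{A_{ij},\mathcal{O}_{ij}\}=A_{ij}$, i.e.\ when $\mathcal{O}_{ij}\subseteq N(\A)_{ij}$. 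Quantifying over all $i,j$ gives the biconditional. Part 4 is the direct entry-wise computation $\alpha_{ij}+(-\alpha_{ij}) = (a_{ij}+A_{ij})+(-a_{ij}+A_{ij}) = A_{ij}+A_{ij}=A_{ij}$, using that $A_{ij}$ is an additive group; so $\A+(-\A)=N(\A)$.

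For the concluding structural claim, Parts 1 and 2 already make $\M_{m,n}(\E)$ a commutative semigroup under addition. Regularity requires that for each $\A$ there exists some $\B$ with $\A+\B+\A=\A$. Taking $\B=-\A$, Part 4 gives $\A+\B=N(\A)$, and then Part 3, applied with $\mathcal{O}=N(\A)$ (for which the inclusions $\mathcal{O}_{ij}\subseteq N(\A)_{ij}$ are trivial equalities), yields $N(\A)+\A=\A$. Composing, $\A+(-\A)+\A=\A$, which establishes regularity.

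There is no substantial obstacle; everything needed was already proved for scalar external numbers in Section~\ref{neutrices and external numbers}. The one conceptual subtlety, flagged in the remarks preceding the proposition, is that the neutral element in Part 3 is not unique: any neutricial matrix dominated entry-wise by $N(\A)$ is absorbed by $\A$, and $N(\A)$ itself is the maximal such neutral; this is exactly why $\M_{m,n}(\E)$ is a regular semigroup rather than a group.
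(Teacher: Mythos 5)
Your proposal is correct and follows essentially the same route as the paper, which proves these claims in the paragraph immediately preceding the proposition: entry-wise reduction to the scalar facts $A+B=\max\{A,B\}$ and $A+A=A$, the identities $\A+N(\A)=\A$ and $\A+(-\A)=N(\A)$, and the composition $\A+(-\A)+\A=\A$ for regularity. Your explicit handling of the biconditional in Part 3 via the neutrix part of each entry is a slightly more detailed write-up of the same observation.
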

We denote the zero matrix of arbitrary dimension by $O$. Observe that $\M_{m, n}(\E)$ is also a monoid for addition since $\A+O=O+\A=\A$ for all $\A \in \M_{m, n}(\E)$, i.e. the matrix $O$ acts a neutral element. But except for matrices with real elements we do not have $\A+-\A=O$.

In the remaining part of this section we study multiplication and its interaction with addition. We will see that almost all usual properties hold for non-negative matrices and non-negative scalars, and outside these classes they still hold under quite general conditions.
 
The following properties of scalar multiplication, multiplication by the identity matrix and transposition are proved as in classical linear algebra. 
\begin{proposition}\label{algmat2} Let $\A\in \M_{m,n}(\E)$. Then 
	\begin{enumerate}
		\item $0\A=O$. 
		\item $1\A =\A$.			
		\item $\alpha(\beta\A)=(\alpha\beta)\A$.		
	\end{enumerate}
\end{proposition}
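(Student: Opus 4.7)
The plan is to observe that all three identities are genuinely entry-wise statements, each reducing to the corresponding scalar identity for external numbers established in Section~\ref{neutrices and external numbers}. By the definition of scalar multiplication, $\lambda\A=(\lambda\alpha_{ij})_{m\times n}$ for any $\lambda\in\E$, so I would compare $(i,j)$-entries independently in each part; the $O$-matrix on the right of (1) likewise has entries equal to the real number $0$.

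For part (1), viewing $0=0+\{0\}$ and applying Definition~\ref{defnam}.\ref{nhan} to an arbitrary entry $\alpha_{ij}=a_{ij}+A_{ij}$, I would compute
\[
0\cdot\alpha_{ij}=0\cdot a_{ij}+0\cdot A_{ij}+a_{ij}\cdot\{0\}+\{0\}\cdot A_{ij}=\{0\},
\]
which is identified with the real number $0$. For part (2), writing $1=1+\{0\}$ and using the same multiplication rule together with Proposition~\ref{propnam}.\ref{tcnt2} (so that $1\cdot A_{ij}=\pounds A_{ij}\cap(1\cdot A_{ij})$ reduces to $A_{ij}$), I get
\[
1\cdot\alpha_{ij}=1\cdot a_{ij}+1\cdot A_{ij}+a_{ij}\cdot\{0\}+\{0\}\cdot A_{ij}=a_{ij}+A_{ij}=\alpha_{ij}.
\]
Assembling these entry-wise equalities yields $0\A=O$ and $1\A=\A$.

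For part (3), I would invoke the associativity of multiplication for external numbers, noted in the paragraph preceding Theorem~\ref{tcopti}, to obtain
\[
\alpha(\beta\alpha_{ij})=(\alpha\beta)\alpha_{ij}
\]
for each $(i,j)$, and then assemble into the matrix identity. The only delicate point worth flagging is that, unlike distributivity, associativity of the scalar product on $\E$ is a genuine equality and not merely an inclusion, so no hypothesis on $\alpha$, $\beta$ or on the entries of $\A$ (such as zerolessness or non-near-oppositeness) is needed. I do not anticipate any real obstacle here; once the entry-wise reduction is in place, the proof is purely routine and parallels the classical argument.
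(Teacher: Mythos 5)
Your proof is correct and matches the paper, which simply remarks that these properties ``are proved as in classical linear algebra''; your entry-wise verification via Definition~\ref{defnam} and the associativity of multiplication on $\E$ is exactly that routine argument spelled out. The only cosmetic point is that in part (2) the detour through Proposition~\ref{propnam}.\ref{tcnt2} is unnecessary, since $1\cdot A_{ij}=A_{ij}$ holds directly.
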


\begin{proposition} Let $\A\in \M_{m,p}(\E), \B\in \M_{p,q}(\E)$ and $I_n$ be the identity matrix of order $n$. Then 
	\begin{enumerate}
		\item $I_m\A=\A= \A I_p$.
		\item $(\A\B)^T=\B^T\A^T$.
	\end{enumerate}

\end{proposition}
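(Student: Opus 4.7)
The proposition asserts two identities that are immediate in classical linear algebra; the plan is to verify that the external arithmetic from Definition~\ref{defnam} (together with Propositions~\ref{algmat} and~\ref{algmat2}) behaves well enough that the usual component-wise computations go through unchanged. Both parts reduce to entrywise calculations, and I will argue that every step is legal in $\E$.

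For the identity law $I_m\A = \A = \A I_p$, I would write $(I_m\A)_{ij} = \sum_{k=1}^{m} (I_m)_{ik}\,\alpha_{kj}$. The only non-neutricial term is the one with $k=i$, and there we have $1\cdot \alpha_{ij} = \alpha_{ij}$, which follows directly from Definition~\ref{defnam}.\ref{nhan} (and is just the scalar form of Proposition~\ref{algmat2}.2). For $k\neq i$ the term is $0\cdot \alpha_{kj}$; writing $\alpha_{kj}=a_{kj}+A_{kj}$ and applying Definition~\ref{defnam}.\ref{nhan} gives $0\cdot a_{kj}+0\cdot A_{kj}+A_{kj}\cdot 0+\{0\}\cdot A_{kj}=\{0\}$, so this term equals the real number $0$ (which is the scalar content of Proposition~\ref{algmat2}.1). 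Since $\{0\}$ is the neutral element for addition in $\E$, all the $k\neq i$ contributions collapse and $(I_m\A)_{ij}=\alpha_{ij}$. The equality $\A I_p = \A$ follows by the symmetric computation.

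For $(\A\B)^T = \B^T\A^T$, I would compare entries. Writing $\A=(\alpha_{ij})\in \M_{m,p}(\E)$ and $\B=(\beta_{ij})\in \M_{p,q}(\E)$, the $(i,j)$-entry of $(\A\B)^T$ is the $(j,i)$-entry of $\A\B$, namely $\sum_{k=1}^{p}\alpha_{jk}\beta_{ki}$. On the other hand, by the definition of transpose, $(\B^T)_{ik}=\beta_{ki}$ and $(\A^T)_{kj}=\alpha_{jk}$, so the $(i,j)$-entry of $\B^T\A^T$ is $\sum_{k=1}^{p}\beta_{ki}\alpha_{jk}$. Commutativity of external multiplication, which is visible from the symmetry of the formula $\alpha\beta = ab + Ab + Ba + AB$ in Definition~\ref{defnam}.\ref{nhan}, matches these two sums term by term; the external associative and commutative laws for addition (Proposition~\ref{algmat}.1-2) then let me rearrange the sums freely.

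The potential obstacle would be if any of the usual equalities (distributivity, cancellation of zero terms) only held as inclusions in $\E$; however neither identity requires distributivity, only associativity and commutativity of addition and multiplication, which do hold unconditionally. The single delicate point is the identity $0\cdot \alpha_{kj}=\{0\}$, and, as just verified, it is an immediate consequence of the product formula even when the neutrix part of $\alpha_{kj}$ is large (including $\R$). Hence no adaptation of the classical argument is needed.
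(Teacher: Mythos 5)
Your proof is correct and matches the paper's approach: the paper simply asserts these identities are ``proved as in classical linear algebra,'' and your entrywise verification — noting that only commutativity of the Minkowski product, neutrality of $0$ for addition, and the identities $1\cdot\alpha=\alpha$ and $0\cdot\alpha=\{0\}$ are needed, with no appeal to distributivity — is exactly that classical argument carried out in $\E$.
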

It follows from the fact that the multiplication on external numbers is not distributive that scalar multiplication and the multiplication of matrices is not distributive. The theorem below presents conditions such that the distributive property does hold.  
\begin{definition}
	Let $\A=(\alpha_{ij})_{m\times n}, \B=(\beta_{ij})_{m\times n}\in \M_{m,n}(\E)$. The matrices $\A$ and $\B$ are said to be \emph{not nearly opposite} if $\alpha_{ij},\beta_{ij}$ are not nearly opposite for any $i\in\{1,\dots, m\}, j\in \{1,\dots, n\}$. 
\end{definition}
Note that matrices with entries of the same sign, and in particular non-negative matrices are not nearly opposite. The next theorem follows readily from the criterion for distributivity given  by Theorem~\ref{tcopti}.\ref{tcopti3}, Proposition \ref{sumexternalnumber} and Proposition~\ref{notnearlyopposite}.

\begin{theorem}\label{distributivitymatrix} Let $\A=(\alpha_{ij})_{m\times n}\in \M_{m,n}(\E)$ and $ \B=(\beta_{ij})_{n\times p}, \C=(\gamma_{ij})_{n\times p}\in \M_{n,p}(\E)$. Let $\alpha,\beta\in \E$.
	
	\begin{enumerate}
		\item \label{congmatrix1}	If either $R(\alpha)\leq \min\limits_{\substack{1\leq i\leq m\\1\leq j\leq n}}\max\left\{R(\alpha_{ij}), R(\beta_{ij})\right\}$ or $\A,\B$ are not nearly opposite,  then $$\alpha(\A+\B) =\alpha\A+\alpha\B.$$
		\item \label{nhanmatrix1}	If either $\max\limits_{\substack{1\leq i\leq m\\1\leq j\leq n}}\left\{R(\alpha_{ij}) \right\}\leq \max \{R(\alpha), R(\beta)\}$ or $\alpha,\beta$ are not nearly opposite, then $(\alpha+\beta)\A = \alpha\A+\beta\A$.
		
		\item \label{nhanmatrix2} If either $\max\limits_{\substack{1\leq i\leq m\\1\leq j\leq n}} R(\alpha_{ij})\leq \min\limits_{\substack{1\leq i\leq m\\1\leq j\leq n}}\max \{R(\beta_{ij}), R(\gamma_{ij})\}$ or $\B,\C$ are not nearly opposite, then  $\A(\B+\C)= \A\B+\A\C$.
	\end{enumerate} 
\end{theorem}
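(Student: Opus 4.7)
The strategy is to reduce each matrix identity to an entrywise identity between external numbers, to which the scalar distributivity results from Theorem~\ref{tcopti}.\ref{tcopti3}, Proposition~\ref{sumexternalnumber} and Proposition~\ref{notnearlyopposite} apply directly. The key observation is that matrix addition, scalar multiplication and matrix multiplication are all defined entry by entry (with the latter involving a finite sum over a standard index), so all that is needed is uniform control over the pointwise distributivity conditions. Since associativity and commutativity of addition on $\E$ transfer to $\M_{m,n}(\E)$, rearranging summations poses no difficulty.

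For part~\ref{congmatrix1}, the $(i,j)$-entries of $\alpha(\A+\B)$ and $\alpha\A+\alpha\B$ are $\alpha(\alpha_{ij}+\beta_{ij})$ and $\alpha\alpha_{ij}+\alpha\beta_{ij}$ respectively. Under the relative uncertainty hypothesis, for each $(i,j)$ one has $R(\alpha)\leq\max\{R(\alpha_{ij}),R(\beta_{ij})\}$, which by Theorem~\ref{tcopti}.\ref{tcopti3} gives distributivity at that entry; under the alternative hypothesis, $\alpha_{ij}$ and $\beta_{ij}$ are not nearly opposite, and Proposition~\ref{notnearlyopposite} applies. Part~\ref{nhanmatrix1} is symmetric: the entries to compare are $(\alpha+\beta)\alpha_{ij}$ and $\alpha\alpha_{ij}+\beta\alpha_{ij}$, and either the bound $R(\alpha_{ij})\leq\max\{R(\alpha),R(\beta)\}$ holds uniformly (so Theorem~\ref{tcopti}.\ref{tcopti3} applies), or $\alpha,\beta$ are not nearly opposite and Proposition~\ref{notnearlyopposite} applies.

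For part~\ref{nhanmatrix2}, the $(i,j)$-entry of $\A(\B+\C)$ is $\sum_{k=1}^n \alpha_{ik}(\beta_{kj}+\gamma_{kj})$, while the corresponding entry of $\A\B+\A\C$ is $\sum_{k=1}^n(\alpha_{ik}\beta_{kj}+\alpha_{ik}\gamma_{kj})$. The termwise distributivity $\alpha_{ik}(\beta_{kj}+\gamma_{kj})=\alpha_{ik}\beta_{kj}+\alpha_{ik}\gamma_{kj}$ is established exactly as in part~\ref{congmatrix1}, by verifying that the hypothesis on $R$ (respectively, on near-oppositeness) holds for the scalar $\alpha_{ik}$ and the pair $(\beta_{kj},\gamma_{kj})$; the max/min quantification in the statement is tailored precisely so that the bound is valid for every such triple. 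Once termwise distributivity is secured, rearranging the sum by commutativity of addition on $\E$ yields the required identity.

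\textbf{Main obstacle.} The only subtle point lies in part~\ref{nhanmatrix2}: the relative uncertainty hypothesis is stated as an inequality between a global $\max$ over $\A$ and a global $\min$ over $\max\{R(\beta_{ij}),R(\gamma_{ij})\}$, so one must check that this single inequality is strong enough to validate Theorem~\ref{tcopti}.\ref{tcopti3} at \emph{every} pair $(i,k),(k,j)$ appearing in the product. This is exactly what the min--max formulation guarantees, but the reader should be shown explicitly that $R(\alpha_{ik})\leq\max\{R(\beta_{kj}),R(\gamma_{kj})\}$ for all $i,j,k$. A small care is also needed when some entries are neutricial, in which case one invokes the second clause of Proposition~\ref{notnearlyopposite}; otherwise the proof is a direct, essentially mechanical translation of the scalar criteria to the matrix setting.
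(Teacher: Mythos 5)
Your proposal is correct and follows essentially the same route as the paper: parts \ref{congmatrix1} and \ref{nhanmatrix1} reduce entrywise to Theorem~\ref{tcopti}.\ref{tcopti3} (or Proposition~\ref{notnearlyopposite}), and part \ref{nhanmatrix2} is obtained by applying the scalar criterion termwise to $\alpha_{ik}(\beta_{kj}+\gamma_{kj})$ and regrouping the finite sum. Your explicit check that the global min--max hypothesis yields $R(\alpha_{ik})\leq\max\{R(\beta_{kj}),R(\gamma_{kj})\}$ for every triple is exactly the (implicit) step in the paper's argument, so nothing is missing.
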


\begin{proof}
	Properties \ref{congmatrix1}. and \ref{nhanmatrix1}. follow directly from Theorem \ref{tcopti}.\ref{tcopti3}.
		
	As for Part \ref{nhanmatrix2}, let $\A(\B+\C)=(\mu_{ij})_{m\times n}$, $\A\B=(\lambda_{ij})_{m\times p}, \A\C=(\nu_{ij})_{m\times p}$. If either $\max\limits_{\substack{1\leq i\leq m\\1\leq j\leq n}} R(\alpha_{ij})\leq \min\limits_{\substack{1\leq i\leq m\\1\leq j\leq n}}\max \{R(\beta_{ij}), R(\gamma_{ij})\}$ or $\B, \C$ are not nearly opposite, by Theorem \ref{tcopti}.\ref{tcopti3} and Proposition~\ref{notnearlyopposite} $\alpha_{ij}(\beta_{rs}+\gamma_{rs})=\alpha_{ij}\beta_{rs}+\alpha_{ij}\gamma_{rs}$ for all $1\leq i\leq m, 1\leq j, r\leq n, 1\leq s\leq p.$ As a result, 
	\begin{alignat*}{2}
	\mu_{ij}=&\alpha_{i1}(\beta_{1j}+\gamma_{1j})+\cdots+\alpha_{in}(\beta_{nj}+\gamma_{nj}
	)\\
	=&\big(\alpha_{i1}\beta_{1j}+\cdots+\alpha_{in}\beta_{nj}\big)+\big( \alpha_{i1}\gamma_{1j}+\cdots+\alpha_{in}\gamma_{nj}\big)\\
	=& \lambda_{ij}+\nu_{ij}.
	\end{alignat*} 
\end{proof}

 The next proposition gives conditions for distributivity in the case of  zeroless matrices, in terms of minimal or maximal relative uncertainty. 
\begin{proposition}
	Let $\A=(\alpha_{ij})_{n\times n}= (a_{ij}+A_{ij})_{n\times n}\in \M_{n}(\E), \B=(\beta_{ij})_{n\times n}= (b_{ij}+B_{ij})_{n\times n}\in \M_{n}(\E), \C=(\gamma_{ij})_{n\times n}= (c_{ij}+C_{ij})_{n\times n}\in \M_{n}(\E)$ be zeroless matrices. Let $\alpha,\beta\in \E$.
	\begin{enumerate}
		\item \label{nhan0} If $R(\alpha)\leq \max \{\underline{B}/\overline{\beta}, \underline{C}/\overline{\gamma})\}$, then $$\alpha(\A+\B) =\alpha\A+\alpha\B.$$
		\item \label{nhan11}	If $\dfrac{\overline{A}}{\underline{\alpha}}\leq \max \{R(\alpha), R(\beta)\}$, then $(\alpha+\beta)\A = \alpha\A+\beta\A.$
		\item \label{nhan12}	If $\dfrac{\overline{A}}{\underline{\alpha}}\leq \max \{\underline{B}/\overline{\beta}, \underline{C}/\overline{\gamma})\}$, then $\A(\B+\C)= \A\B+\A\C$.
	\end{enumerate}
\end{proposition}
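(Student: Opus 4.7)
The plan is to reduce each of the three claims to the corresponding part of Theorem~\ref{distributivitymatrix}. The present hypotheses are framed in terms of the aggregate quantities $\overline{A},\underline{A},|\overline{\alpha}|,|\underline{\alpha}|$ of a matrix, so in each case the real work is a monotonicity lemma bracketing the entrywise relative uncertainties by these aggregates; once that is in hand, the conclusion follows by invoking the already-proved criterion.

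The key inequality to establish is
$$\underline{A}/|\overline{\alpha}|\ \leq\ R(\alpha_{ij})\ \leq\ \overline{A}/|\underline{\alpha}|$$
for every pair $i,j$, and similarly for the entries of $\B$ and $\C$. Since all matrices are zeroless, Lemma~\ref{relAa} gives $R(\alpha_{ij})=A_{ij}/a_{ij}$, and Proposition~\ref{propnam}.\ref{tcoptiv} lets us rewrite quotients by zeroless external numbers as quotients by their representatives. Combining this with the inclusions $\underline{A}\subseteq A_{ij}\subseteq \overline{A}$ (by definition of max/min under inclusion) and the monotonicity of $N\mapsto N/t$ in $N$ and of $t\mapsto N/|t|$ in $|t|$, I obtain the two-sided bound above.

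For Part~\ref{nhan0}, the lower bound gives $\max\{\underline{A}/|\overline{\alpha}|,\underline{B}/|\overline{\beta}|\}\leq\min_{i,j}\max\{R(\alpha_{ij}),R(\beta_{ij})\}$, so the assumption on $R(\alpha)$ implies the hypothesis of Theorem~\ref{distributivitymatrix}.\ref{congmatrix1}, yielding the desired distributivity. For Part~\ref{nhan11}, the upper bound gives $\max_{i,j}R(\alpha_{ij})\leq \overline{A}/|\underline{\alpha}|$, so the assumption implies the hypothesis of Theorem~\ref{distributivitymatrix}.\ref{nhanmatrix1}. Part~\ref{nhan12} combines both: the upper bound on the entries of $\A$ and the lower bound on those of $\B,\C$ chain together through the chosen hypothesis to verify the assumption of Theorem~\ref{distributivitymatrix}.\ref{nhanmatrix2}.

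The main obstacle I expect is a notational and rigorous one: the quantities $|\overline{\alpha}|$ and $|\underline{\alpha}|$ are themselves external numbers (not plain reals), so the expressions $\overline{A}/|\underline{\alpha}|$ and $\underline{A}/|\overline{\alpha}|$ are quotients of a neutrix by an external number. Keeping these well-defined and monotone requires repeatedly invoking Proposition~\ref{propnam}.\ref{tcoptiv} to reduce to quotients by scalar representatives, and using zerolessness to ensure the denominators never degenerate. Once this reduction is carried out cleanly, the chain of inclusions becomes automatic and the three parts fall out uniformly.
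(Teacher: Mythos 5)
Your proposal is correct and follows essentially the same route as the paper: both reduce the three claims to Theorem~\ref{distributivitymatrix} via the bracketing $\underline{A}/|\overline{\alpha}|\leq R(\alpha_{ij})\leq \overline{A}/|\underline{\alpha}|$ (and likewise for $\B$, $\C$), the only difference being that you spell out these inequalities via Lemma~\ref{relAa} and Proposition~\ref{propnam}.\ref{tcoptiv} where the paper merely asserts them. Note that, exactly as in the paper's own terse argument, your treatment of Part~\ref{nhan0} tacitly reads its hypothesis in the evidently intended form involving $\underline{A}/\overline{\alpha}$ and $\underline{B}/\overline{\beta}$ (the matrices actually appearing in the conclusion) rather than the $\underline{B}/\overline{\beta}$, $\underline{C}/\overline{\gamma}$ literally written in the statement.
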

\begin{proof} 
	\begin{enumerate}
	\item 	For all $1\leq i\leq m, 1\leq j\leq n$ it holds that
	 $$\max \{\underline{B}/\overline{\beta}, \underline{C}/\overline{\gamma})\}\leq \max \{R(\beta_{ij}), R(\gamma_{ij})\}. $$
	 Then distributivity follows from Part \ref{congmatrix1} of Theorem \ref{distributivitymatrix}. 
	
	\item  The distributivity  follows from the fact that $\max\limits_{\substack{1\leq i\leq m\\1\leq j\leq n}} R(\alpha_{ij})\leq \dfrac{\overline{A}}{\underline{\alpha}}$ and Part \ref{nhanmatrix1} of Theorem \ref{distributivitymatrix}. 
	\item  The distributivity is a consequence of Part \ref{nhan11}, the fact that 
	$$\max \{\underline{B}/\overline{\beta}, \underline{C}/\overline{\gamma})\}\leq \max \{R(\beta_{ij}), R(\gamma_{ij}) $$
	 for all $1\leq i\leq m, 1\leq j\leq n$, and Part \ref{nhanmatrix2} of Theorem \ref{distributivitymatrix}. 
	
	\end{enumerate}

\end{proof}

The subdistributivity property for external numbers implies the following general properties of subdistributivity for scalar multiplication and multiplication of matrices. The proofs are immediate.

\begin{proposition} Let $\A=(\alpha_{ij})_{m\times n}, \B=(\beta_{ij})_{m\times n}\in \M_{m, n}(\E), \C=(\gamma_{ij})_{n\times p}\in \M_{n, p}(\E)$. Let $\alpha,\beta\in \E$.Then 
	\begin{enumerate}
		\item $\alpha(\A+\B)\subseteq \alpha\A+\alpha\B$.
		\item $(\alpha+\beta)\A \subseteq \alpha\A+\beta\A$.
		\item $A(\B+\C)\subseteq \A\B+\A\C$.
		\item $(\A+\B)\C\subseteq \A\C+\B\C$.
	\end{enumerate}
\end{proposition}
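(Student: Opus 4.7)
The plan is to reduce each of the four matrix inclusions to the corresponding pointwise (scalar) statement, which is exactly the subdistributivity of external numbers given by Theorem \ref{tcopti}.\ref{tcopti1}. The only ingredient beyond this is the monotonicity of Minkowski addition: if $X\subseteq X'$ and $Y\subseteq Y'$ are external subsets of $\R$, then $X+Y\subseteq X'+Y'$. This is immediate from the definition of the Minkowski sum and extends by iteration (using standard external induction if needed, given that the dimensions $m,n,p$ are standard) to any finite sum.

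For Parts $1$ and $2$, I would simply compare entries. The $(i,j)$ entry of $\alpha(\A+\B)$ equals $\alpha(\alpha_{ij}+\beta_{ij})$, which by scalar subdistributivity is contained in $\alpha\alpha_{ij}+\alpha\beta_{ij}$, i.e.\ in the $(i,j)$ entry of $\alpha\A+\alpha\B$. Since inclusion of matrices is defined entry-wise, Part $1$ follows. Part $2$ is identical with the roles of the scalar and the matrix swapped, applying $(\alpha+\beta)\alpha_{ij}\subseteq \alpha\alpha_{ij}+\beta\alpha_{ij}$.

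For Parts $3$ and $4$ there is one extra step, namely passing the inclusion through a finite sum. For Part $3$, the $(i,j)$ entry of $\A(\B+\C)$ is $\ds_{k=1}^n \alpha_{ik}(\beta_{kj}+\gamma_{kj})$. Applying scalar subdistributivity term by term yields $\alpha_{ik}(\beta_{kj}+\gamma_{kj})\subseteq \alpha_{ik}\beta_{kj}+\alpha_{ik}\gamma_{kj}$ for each $k$. Monotonicity of Minkowski addition then gives
\[
\ds_{k=1}^n \alpha_{ik}(\beta_{kj}+\gamma_{kj})\subseteq \ds_{k=1}^n \bigl(\alpha_{ik}\beta_{kj}+\alpha_{ik}\gamma_{kj}\bigr)=\ds_{k=1}^n \alpha_{ik}\beta_{kj}+\ds_{k=1}^n \alpha_{ik}\gamma_{kj},
\]
where the last equality uses commutativity and associativity of addition of external numbers. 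The right-hand side is the $(i,j)$ entry of $\A\B+\A\C$, so Part $3$ follows. Part $4$ is the symmetric statement obtained by applying the same argument to right multiplication by $\C$.

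No genuine obstacle is expected: the scalar subdistributivity is already established, and the only conceptual point to flag is the monotonicity of Minkowski sums (together with the fact that all index ranges are standard finite, so repeated application to form finite sums is unproblematic). This is why the authors call the proof immediate.
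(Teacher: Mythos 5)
Your argument is correct and is exactly the route the paper intends: the paper gives no written proof beyond noting that the statements follow immediately from scalar subdistributivity (Theorem \ref{tcopti}.\ref{tcopti1}), and your entry-wise reduction plus monotonicity of Minkowski addition over the standard finite sums is that argument spelled out.
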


The lack of distributivity also implies that the multiplication of matrices is not associative \cite[p.35]{Jus2}. For example, let $A=\begin{pmatrix}
1&1\\
0&0
\end{pmatrix}, B=\begin{pmatrix}
1&0\\
-1&0
\end{pmatrix}, C=\begin{pmatrix}
\oslash\\
\oslash
\end{pmatrix}$. One has
$$(AB)C=\begin{pmatrix}
\begin{pmatrix}
1&1\\
0&0
\end{pmatrix}\begin{pmatrix}
1&0\\
-1&0
\end{pmatrix}
\end{pmatrix}\begin{pmatrix}
\oslash\\
\oslash
\end{pmatrix}=\begin{pmatrix}
0\\0
\end{pmatrix}$$
and 
$$A(BC)=\begin{pmatrix}
1&1\\
0&0
\end{pmatrix}\begin{pmatrix}
\begin{pmatrix}
1&0\\
-1&0
\end{pmatrix}\begin{pmatrix}
\oslash \\ \oslash 
\end{pmatrix}
\end{pmatrix}=\begin{pmatrix}
1&1\\
0&0
\end{pmatrix}\begin{pmatrix}
\oslash\\\oslash
\end{pmatrix}=\begin{pmatrix}
\oslash\\
0
\end{pmatrix}.$$
So $(AB)C\not=A(BC).$
However, the subdistributivity of multiplication of external numbers, as shown in Property \eqref{tcopti1} of Proposition \ref{tcopti}, implies the following properties of inclusion.
\begin{proposition}
	Let $\A=(\alpha_{ij})_{m\times n}\in \M_{m, n}(\E), \B=(\beta_{ij})_{n\times p}\in \M_{n, p}(\E)$ and $ \C=(\gamma_{ij})_{p\times q}\in \M_{p, q}(\E)$. Then
	\begin{enumerate}
		\item \label{inclusion1}  $(\A\B)\C\subseteq \A(\B\C) \mbox{ if $\A$ is a real matrix or $\B, \C$ are non-negative}.$
	\item  \label{inclusion2}  $\A(\B\C)\subseteq (\A\B)\C \mbox{ if $\C$ is a real matrix or $\A, \B$ are non-negative}.$
\end{enumerate}
\end{proposition}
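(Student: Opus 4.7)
The plan is to compute the $(i,j)$-entries of both sides, show that each is contained in a common triple sum
\[
T_{ij}:=\ds_{l=1}^{n}\ds_{k=1}^{p}\alpha_{il}\beta_{lk}\gamma_{kj}
\]
via subdistributivity, and then close the argument by identifying one side with $T_{ij}$ by means of an actual equality. The starting observation is that
\begin{align*}
((\A\B)\C)_{ij} &= \ds_{k=1}^{p}\Bigl(\ds_{l=1}^{n}\alpha_{il}\beta_{lk}\Bigr)\gamma_{kj}, \\
(\A(\B\C))_{ij} &= \ds_{l=1}^{n}\alpha_{il}\Bigl(\ds_{k=1}^{p}\beta_{lk}\gamma_{kj}\Bigr).
\end{align*}
Iterated application of Theorem~\ref{tcopti}.\ref{tcopti1} (subdistributivity), via External Induction, yields $(\sum_{l}\alpha_{il}\beta_{lk})\gamma_{kj}\subseteq\sum_{l}\alpha_{il}\beta_{lk}\gamma_{kj}$ and, using commutativity of external multiplication, $\alpha_{il}(\sum_{k}\beta_{lk}\gamma_{kj})\subseteq\sum_{k}\alpha_{il}\beta_{lk}\gamma_{kj}$. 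Summing over the remaining index and exchanging the order of summation by associativity and commutativity of addition in $\E$, I obtain both $((\A\B)\C)_{ij}\subseteq T_{ij}$ and $(\A(\B\C))_{ij}\subseteq T_{ij}$. The direction of the claimed inclusion is then decided by which side I can show actually equals $T_{ij}$.

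For Part~\ref{inclusion1} I need $(\A(\B\C))_{ij}=T_{ij}$, which reduces to the equality $\alpha_{il}\sum_{k}\beta_{lk}\gamma_{kj}=\sum_{k}\alpha_{il}\beta_{lk}\gamma_{kj}$ for each $l$. If $\A$ is a real matrix, $\alpha_{il}\in\R$ has trivial neutrix part; then $a(\beta+\beta')=a\beta+a\beta'$ is immediate from Definition~\ref{defnam}, and the extension to $p$ summands follows by External Induction. If instead $\B$ and $\C$ are non-negative, each product $\beta_{lk}\gamma_{kj}$ is non-negative, and so every partial sum $\beta_{l1}\gamma_{1j}+\cdots+\beta_{lk}\gamma_{kj}$ is non-negative and therefore not nearly opposite to $\beta_{l,k+1}\gamma_{k+1,j}$. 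Iterated application of Proposition~\ref{notnearlyopposite}, together with commutativity, then yields the desired equality.

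Part~\ref{inclusion2} is the mirror statement, requiring $((\A\B)\C)_{ij}=T_{ij}$, which reduces to $(\sum_{l}\alpha_{il}\beta_{lk})\gamma_{kj}=\sum_{l}\alpha_{il}\beta_{lk}\gamma_{kj}$ for each $k$. This is handled symmetrically: if $\C$ is a real matrix then $\gamma_{kj}\in\R$ distributes freely over the sum, and if $\A$ and $\B$ are non-negative then the terms $\alpha_{il}\beta_{lk}$ are pairwise non-opposite and Proposition~\ref{notnearlyopposite} applies iteratively. The main bookkeeping point, and the only real subtlety, is that the \emph{distributable} factor (real, or forming only non-negative summands) must stand outside the parenthesised sum on the side where equality with $T_{ij}$ is needed; subdistributivity then handles the opposite side automatically, and the two estimates combine to give the declared inclusion rather than its reverse.
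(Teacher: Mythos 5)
Your proof is correct and takes essentially the same route as the paper's: one side is placed inside the common triple sum $\sum_{l}\sum_{k}\alpha_{il}\beta_{lk}\gamma_{kj}$ by subdistributivity, and that triple sum is then identified with the other side by distributivity (immediate for a real factor, and via non-negative, hence not nearly opposite, summands using Proposition~\ref{notnearlyopposite} otherwise). The only differences are cosmetic: you also record the (redundant) inclusion of the second side into the triple sum and spell out the iteration over summands, which the paper leaves implicit.
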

\begin{proof} Let $\A\B\equiv \mathcal{D}\equiv(\delta_{ij})_{m\times p}$, $ \B\C\equiv\mathcal{E}\equiv (\varepsilon_{ij})_{n\times q} $, $(\A\B)\C\equiv (\eta_{ij})_{m\times q}$ and $\A(\B\C)\equiv (\theta_{ij})_{m\times q}$. 
	
\ref{inclusion1}. We have by subdistributivity for all $ i\in \{1, \dots, m\}, k\in \{1,\dots, q\} $

\begin{equation*}
	\eta_{ik}=\sum_{j=1}^p\delta_{ij}\gamma_{jk}=\sum_{j=1}^p\left(\sum_{r=1}^n \alpha_{ir}\beta_{rj}\right) \gamma_{jk}\subseteq \sum_{j=1}^p\sum_{r=1}^n \alpha_{ir}\beta_{rj} \gamma_{jk}.
	\end{equation*}
	If  $\A$ is a real matrix, or else by non-negative of $\B \C$, the last sum is equal to $ \displaystyle\sum_{r=1}^n\alpha_{ir} \left(\sum_{j=1}^p \beta_{rj}\gamma_{jk}\right) =\theta_{ik}$.

	\ref{inclusion2}. The proof is similar to the proof of Part \ref{inclusion1}. 
	
\end{proof}
We below provide conditions for the associative law  for the multiplication of matrices to be valid.

\begin{proposition}\label{associate of product of matrix2}
	Let $\A=(\alpha_{ij})_{m\times n}\in \M_{m,n}(\E)$, $\B=(B_{ij})_{n\times p}\in \M_{n,p}(\E)$ be a neutricial matrix and $\C=(\gamma_{ij})_{p\times q}\in \M_{p, q}(\E)$. Then $\A(\B\C)=(\A\B)\C.$	
\end{proposition}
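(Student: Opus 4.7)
The plan is to show that both matrix products have identical $(i,k)$-entries, both equal to the triple sum $\sum_{r,j}\alpha_{ir}B_{rj}\gamma_{jk}$.

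First I would record the structural observation that every entry of $\A\B$ and of $\B\C$ is a neutrix, since for any external number $\alpha=a+A$ and any neutrix $N$ one has $\alpha N=aN+AN$, which is a neutrix, and neutrices are closed under (finite) addition. So both $\A\B$ and $\B\C$ are neutricial matrices. In particular, for every such entry $N$ one has $R(N)=\R$.

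The key auxiliary fact I would then isolate is a one-sided distributivity for sums of neutrices: for any external number $\alpha$ and any neutrices $N_1,\ldots,N_k$,
\begin{equation*}
\alpha(N_1+\cdots+N_k)=\alpha N_1+\cdots+\alpha N_k,
\end{equation*}
and symmetrically on the other side. This is immediate from Proposition~\ref{sumexternalnumber}, because the hypothesis $R(\alpha)\leq \min_i R(N_i)$ is automatic once $R(N_i)=\R$. Combined with associativity of scalar external-number multiplication (i.e.\ $(\alpha\beta)\gamma=\alpha(\beta\gamma)$ for external numbers, which is a direct expansion from Definition~\ref{defnam}.\ref{nhan}), this will let me freely shuffle factors and reorder finite sums of neutrices.

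With these tools in hand, I would expand the $(i,k)$-entry of $(\A\B)\C$: each $(\A\B)_{ij}=\sum_r\alpha_{ir}B_{rj}$ is a neutrix, so applying the auxiliary fact to pull $\gamma_{jk}$ through the inner sum gives
\begin{equation*}
[(\A\B)\C]_{ik}=\sum_{j}\Bigl(\sum_{r}\alpha_{ir}B_{rj}\Bigr)\gamma_{jk}=\sum_{j,r}(\alpha_{ir}B_{rj})\gamma_{jk}.
\end{equation*}
Dually, each $(\B\C)_{rk}=\sum_j B_{rj}\gamma_{jk}$ is a neutrix, so
\begin{equation*}
[\A(\B\C)]_{ik}=\sum_{r}\alpha_{ir}\Bigl(\sum_{j}B_{rj}\gamma_{jk}\Bigr)=\sum_{r,j}\alpha_{ir}(B_{rj}\gamma_{jk}).
\end{equation*}
Scalar associativity identifies the two triple sums term by term, and commutativity of addition of external numbers lets me reorder, so the entries coincide.

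The main obstacle I foresee is solely bookkeeping: I must be sure that at every step where I invoke distributivity, at least one factor is a neutrix (equivalently, has relative uncertainty $\R$), so that the criterion of Theorem~\ref{tcopti}.\ref{tcopti3} via Proposition~\ref{sumexternalnumber} applies. The counterexample to associativity recalled just before this proposition shows that without the hypothesis that $\B$ is neutricial the argument breaks down precisely at the step where I pull $\gamma_{jk}$ through $\sum_r\alpha_{ir}B_{rj}$; it is exactly the neutricial nature of this inner sum that rescues distributivity.
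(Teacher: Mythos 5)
Your proposal is correct and follows essentially the same route as the paper: both arguments rest on the observation that every entry involving the neutricial matrix $\B$ (each $\alpha_{ir}B_{rj}$ and $B_{rj}\gamma_{jk}$, hence each entry of $\A\B$ and $\B\C$) is a neutrix, so that Proposition~\ref{sumexternalnumber} gives unrestricted distributivity and both $(i,k)$-entries collapse to the same triple sum $\sum_{r,j}\alpha_{ir}B_{rj}\gamma_{jk}$. The only cosmetic difference is that you expand both sides symmetrically into that triple sum and invoke associativity of external-number multiplication explicitly, whereas the paper expands $\A(\B\C)$ and regroups it to match the unexpanded entries of $(\A\B)\C$; the content is the same.
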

\begin{proof}
	Put $\A\B\equiv \mathcal{D}\equiv(\delta_{ij})_{m\times p}.$ One has $$\delta_{ij} =\alpha_{i1}B_{1j}+\cdots+\alpha_{in}B_{nj}$$ for all $1\leq i\leq m, 1\leq j\leq p.$ Because $B_{ij}$ is a neutrix for  $1\leq i\leq n,q\leq j\leq p$, also $\delta_{ij}$ is a neutrix for all $1\leq i\leq m, 1\leq j\leq p.$ 
	
	So $(\A\B)\C=\mathcal{D}\C\equiv (\eta_{ij})_{m\times q}$, where \begin{alignat}{2}\label{comutative of matrix2} \eta_{ij}=&\delta_{i1}\gamma_{1j}+\cdots+\delta_{ip}\gamma_{pj}\notag\\
	=& (a_{i1}B_{11}+\cdots+a_{in}B_{n1})\gamma_{1j} +\cdots+ (a_{i1}B_{1p}+\cdots+a_{in}B_{np})\gamma_{pj}.
	\end{alignat} 
	On the other hand, let $\B\C\equiv \mathcal{E}\equiv (\varepsilon_{ij})_{n\times q}$. Then 
	$\varepsilon_{ij}=B_{i1}\gamma_{1j}+\cdots+B_{ip}\gamma_{pj}$ for all $1\leq i\leq n, 1\leq j\leq q.$ Put $\A(\B\C)=\A\mathcal{E}\equiv (\theta_{ij})_{m\times q}$. Using the distributivity property for multiplication by neutrices we have for all $1\leq i\leq m, 1\leq j\leq q,$
	\begin{alignat}{2}\label{comutative22} 
	\theta_{ij}=& \alpha_{i1}\varepsilon_{1j}+\cdots+\alpha_{in}\varepsilon_{nj}\notag\\
	=& \alpha_{i1}(B_{11}\gamma_{1j}+\cdots+B_{1p}\gamma_{pj})+\cdots+ \alpha_{in}(B_{n1}\gamma_{1j}+\cdots+B_{np}\gamma_{pj})\notag\\
	=& \alpha_{i1}B_{11}\gamma_{1j}+\cdots+\alpha_{i1}B_{1p}\gamma_{pj}+\cdots+ \alpha_{in}B_{n1}\gamma_{1j}+\cdots+\alpha_{in}B_{np}\gamma_{pj}\notag\\
	=&(\alpha_{i1}B_{11}+\cdots+\alpha_{in}B_{n1})\gamma_{1j} +\cdots+ (\alpha_{i1}B_{1p}+\cdots+\alpha_{in}B_{np})\gamma_{pj}.
	\end{alignat} 
	From \eqref{comutative of matrix2} and \eqref{comutative22} one has $\eta_{ij}=\theta_{ij}$ for all $1\leq i\leq m, 1\leq j\leq q.$
	Hence $(\A\B)\C=\A(\B\C)$.
\end{proof}

\begin{proposition}\label{associate of product of matrix}
	Let $\A=(a_{ij})_{m\times n}\in \M_{m,n}(\R)$, $\B=(\beta_{ij})_{n\times p}\in \M_{n,p}(\E)$, and $\C=(c_{ij})_{p\times q}\in \M_{p, q}(\R)$. Then $\A(\B\C)=(\A\B)\C.$	
\end{proposition}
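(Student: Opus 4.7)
The plan is to reduce associativity to the full distributivity of a real scalar over a sum of external numbers, after which the two triple sums arising in $(\A\B)\C$ and $\A(\B\C)$ will coincide up to the order of summation. Setting $\mathcal{D}=\A\B=(\delta_{ij})_{m\times p}$ and $\mathcal{E}=\B\C=(\varepsilon_{rk})_{n\times q}$, the $(i,k)$-entries expand to
\begin{align*}
\eta_{ik} &= \sum_{j=1}^{p}\!\left(\sum_{r=1}^{n} a_{ir}\beta_{rj}\right)\! c_{jk}, \\
\theta_{ik} &= \sum_{r=1}^{n} a_{ir}\!\left(\sum_{j=1}^{p} \beta_{rj}\, c_{jk}\right).
\end{align*}

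The key observation is that each $a_{ir}$ and each $c_{jk}$ lies in $\R$, so when nonzero it has relative uncertainty $\{0\}$, and the case of a zero scalar is immediate from Definition~\ref{defnam}.\ref{nhan}. Hence Proposition~\ref{sumexternalnumber}---or alternatively Theorem~\ref{tcopti}.\ref{tcopti3}, since the correction term in Theorem~\ref{tcopti}.\ref{tcopti2} vanishes when the neutrix part of the factor is $\{0\}$---yields full distributivity $d(\xi_1+\cdots+\xi_\ell)=d\xi_1+\cdots+d\xi_\ell$ for every $d\in\R$ and $\xi_1,\dots,\xi_\ell\in\E$. Combined with commutativity of external multiplication, this lets me pull $c_{jk}$ into the sum over $r$ in $\eta_{ik}$ and $a_{ir}$ into the sum over $j$ in $\theta_{ik}$. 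I also invoke the identity $(a\beta)c=a(\beta c)$ for $a,c\in\R$ and $\beta\in\E$, a direct verification from Definition~\ref{defnam}.\ref{nhan}, so that each summand is unambiguously written as $a_{ir}\beta_{rj}c_{jk}$.

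After these rewrites one has $\eta_{ik}=\sum_{j}\sum_{r}a_{ir}\beta_{rj}c_{jk}$ and $\theta_{ik}=\sum_{r}\sum_{j}a_{ir}\beta_{rj}c_{jk}$, which agree because addition on $\E$ is commutative and associative (Proposition~\ref{algmat}). This yields $\eta_{ik}=\theta_{ik}$ for all $i,k$, hence $(\A\B)\C=\A(\B\C)$.

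The main difficulty I anticipate is essentially bookkeeping rather than any substantive step: the sign-opposition and neutrix-coupling effects that spoiled associativity in the counterexample preceding Proposition~\ref{associate of product of matrix2} cannot arise here, precisely because $\A$ and $\C$ contain no neutrix parts, so every scalar multiplication by one of their entries is distributive without correction. The only mild subtlety is verifying that Proposition~\ref{sumexternalnumber} still applies when some $a_{ir}$ or $c_{jk}$ equals $0$, which is handled by the trivial zero-scalar case.
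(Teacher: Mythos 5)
Your proposal is correct and follows essentially the same route as the paper: expand the entries of $(\A\B)\C$ and $\A(\B\C)$ and use that multiplication by a real scalar distributes fully over sums of external numbers, then regroup the resulting sums. Your extra care in justifying distributivity via $R(a)=\{0\}$ (with the zero-scalar case separate) and the identity $(a\beta)c=a(\beta c)$ only makes explicit what the paper invokes as ``the distributivity property for multiplication by real numbers.''
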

\begin{proof}
	The proof is as above, now using the distributivity property for multiplication by real numbers. Put $\A\B\equiv \mathcal{D}\equiv(\delta_{ij})_{m\times p}.$ One has $$\delta_{ij} =a_{i1}\beta_{1j}+\cdots+a_{in}\beta_{nj}$$ for all $1\leq i\leq m, 1\leq j\leq p.$

	As a consequence, $(\A\B)\C=\mathcal{D}C\equiv(\eta_{ij})_{m\times q}$, where \begin{alignat}{2}\label{comutative of matrix} \eta_{ij}=&\delta_{i1}c_{1j}+\cdots+\delta_{ip}c_{pj}\notag\\
	=& (a_{i1}\beta_{11}+\cdots+a_{in}\beta_{n1})c_{1j} +\cdots+ (a_{i1}\beta_{1p}+\cdots+a_{in}\beta_{np})c_{pj}.
	\end{alignat} 
	On the other hand, let $\B\C\equiv \mathcal{E}\equiv (\varepsilon_{ij})_{n\times q}$. Then 
	$\varepsilon_{ij}=\beta_{i1}c_{1j}+\cdots+\beta_{ip}c_{pj}$ for all $1\leq i\leq n, 1\leq j\leq q.$ Put $\A(\B\C)=\A\mathcal{E}\equiv (\theta_{ij})_{m\times q}$. Then for all $1\leq i\leq m, 1\leq j\leq q,$ 
	\begin{alignat}{2}\label{comutative2} 
	\theta_{ij}=& a_{i1}\varepsilon_{1j}+\cdots+a_{in}\varepsilon_{nj}\notag\\
	=& a_{i1}(\beta_{11}c_{1j}+\cdots+\beta_{1p}c_{pj})+\cdots+ a_{in}(\beta_{n1}c_{1j}+\cdots+\beta_{np}c_{pj})\notag\\
	=& a_{i1}\beta_{11}c_{1j}+\cdots+a_{i1}\beta_{1p}c_{pj}+\cdots+ a_{in}\beta_{n1}c_{1j}+\cdots+a_{in}\beta_{np}c_{pj}\notag\\
	=&(a_{i1}\beta_{11}+\cdots+a_{in}\beta_{n1})c_{1j} +\cdots+ (a_{i1}\beta_{1p}+\cdots+a_{in}\beta_{np})c_{pj}.
	\end{alignat} 
	By \eqref{comutative of matrix} and \eqref{comutative2} one has $\eta_{ij}=\theta_{ij}$ for all $1\leq i\leq m, 1\leq j\leq q.$
	Hence $(\A\B)\C=\A(\B\C)$.
\end{proof}

\begin{proposition}\label{nonnegass}
	Let $\A=(\alpha_{ij})_{m\times n}\in \M_{m,n}(\E), \B=(\beta_{ij})_{n\times p}\in \M_{n,p}(\E), \C=(\gamma_{ij})_{p\times q}\in \M_{p,q}(\E)$ be non-negative matrices. Then $(\A\B)\C=\A(\B\C)$.
\end{proposition}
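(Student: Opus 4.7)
The plan is to combine the two inclusions established in the preceding proposition. Since $\A, \B, \C$ are all non-negative, both hypotheses are satisfied simultaneously: non-negativity of $\B$ and $\C$ triggers Part \ref{inclusion1}, yielding $(\A\B)\C \subseteq \A(\B\C)$, while non-negativity of $\A$ and $\B$ triggers Part \ref{inclusion2}, yielding $\A(\B\C) \subseteq (\A\B)\C$. The desired equality follows at once.

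Alternatively, one can argue componentwise in a self-contained way. Writing $\A\B \equiv (\delta_{ij})$ and $\B\C \equiv (\varepsilon_{ij})$, the $(i,k)$-entries of $(\A\B)\C$ and $\A(\B\C)$ are respectively
$$\eta_{ik} \;=\; \sum_{j=1}^p\Big(\sum_{r=1}^n \alpha_{ir}\beta_{rj}\Big)\gamma_{jk}, \qquad \theta_{ik} \;=\; \sum_{r=1}^n \alpha_{ir}\Big(\sum_{j=1}^p \beta_{rj}\gamma_{jk}\Big).$$
Since any two non-negative external numbers are not nearly opposite (their quotient is non-negative and so not contained in $-1+\oslash$), Proposition~\ref{notnearlyopposite} delivers full distributivity in each step. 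Thus both $\eta_{ik}$ and $\theta_{ik}$ reduce to the common double sum $\sum_{r,j}\alpha_{ir}\beta_{rj}\gamma_{jk}$, using associativity of the scalar product $\alpha_{ir}(\beta_{rj}\gamma_{jk})=(\alpha_{ir}\beta_{rj})\gamma_{jk}$ and commutativity of addition of external numbers to permute the order of summation.

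I expect no substantive obstacle: the statement is essentially a corollary of the two inclusion results proved just above. The only point deserving care is verifying that non-negativity propagates through the intermediate sums and products, i.e.\ that all terms $\alpha_{ir}\beta_{rj}$ and the partial sums $\sum_r \alpha_{ir}\beta_{rj}$ remain non-negative so that Proposition~\ref{notnearlyopposite} is applicable at every stage; this is immediate from Definition~\ref{defnearop} together with the fact that sums and products of non-negative external numbers are non-negative.
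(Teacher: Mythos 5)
Your proposal is correct. Your primary argument --- intersecting the two inclusions of the preceding proposition, both of whose hypotheses are simultaneously met when $\A$, $\B$, $\C$ are all non-negative --- is a legitimate shortcut that the paper does not take: it turns the proposition into a one-line corollary, at the price of leaning on the earlier inclusion result (whose proof already contains the relevant distributivity argument). The paper's own proof is exactly your alternative, self-contained computation: since all entries are non-negative, distributivity is available at every step, so the $(i,j)$-entries of $(\A\B)\C$ and of $\A(\B\C)$ both reduce to the common double sum $\sum_{k}\sum_{r}\alpha_{ik}\beta_{kr}\gamma_{rj}$, using associativity of multiplication of external numbers and rearrangement of the finite (standard-size) sums. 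Your closing remark identifies the one point the paper glosses over with the phrase ``we always have distributivity'': distributing a factor over a sum of more than two terms requires iterating Proposition~\ref{notnearlyopposite} (or invoking External Induction as in Proposition~\ref{sumexternalnumber}), which works precisely because products and partial sums of non-negative external numbers remain non-negative, hence are never nearly opposite; and when some entry or partial sum is neutricial the notion of nearly opposite does not apply, but distributivity then holds anyway, as noted in the proof of Proposition~\ref{notnearlyopposite}. So both of your routes are sound, the second being essentially the paper's argument with that small point made explicit.
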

\begin{proof} Let $(\A\B)\C\equiv (\eta_{ij})_{m\times q}$ and $\A(\B\C)\equiv (\theta_{ij})_{m\times q}$. 
	Because the elements of the matrices $\A,\B$ and $\C$ are  non-negative, we always have distributivity. Hence for all $1\leq i\leq m, 1\leq j\leq q$ 
	$$\eta_{ij}=\ds_{k=1}^n \alpha_{ik} \ds_{r=1}^p \beta_{kr}\gamma_{rj}=\ds_{k=1}^n\ds_{r=1}^p \alpha_{ik}\beta_{kr}\gamma_{rj}=\ds_{r=1}^p\gamma_{rj} \ds_{k=1}^n \alpha_{ik}\beta_{kr} =\theta_{ij}.$$
	It follows that $(\A\B)\C=\A(\B\C)$.
\end{proof} 
Obviously, the above associative property continues to hold if the entries of each matrix all have the same sign.

From above results we see that the set of non-negative matrices together addition and scalar multiplication satisfies almost every axiom of a vector space, except for the existence of inverse elements for addition. Also distributivity and associativity of multiplication are respected. We state this observation in the theorem below. 
\begin{theorem} Let $\M_{m\times n}^+(\E)$ be the set of non-negative matrices over $\E$. Then for all $ \A,\B,\C\in \M_{m\times n}^+(\E)$ and non-negative $ \lambda,\mu\in \E $
	\begin{enumerate}
		\item $ \A+\B\in \M_{m\times n}^+(\E) $.
		\item $ \A+(\B+\C)=(\A+\B) +\C$.
		\item $O \in \M_{m\times n}^+(\E)$ and $ \A+O=O $.
		\item $ \A+\B=\B+\A$.
		\item $\lambda\A\in\M_{m\times n}^+(\E)$.% if $0\leq \lambda$.
		\item $ \lambda(\mu \A) =(\lambda\mu)\A$.
		\item $ 1\A=\A $.
		\item $ \lambda(\A+\B)=\lambda \A +\lambda \B$.
		\item $ (\lambda +\mu)\A=\lambda \A +\mu\A$.% if $0\leq \lambda, 0\leq \mu$.
	\end{enumerate}
	Moreover, whenever the product of non-negative matrices over $ \E $ is well-defined, it is distributive and associative. 
\end{theorem}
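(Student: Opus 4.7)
My approach is to observe that every clause of the theorem is either a specialisation of an earlier result or an immediate entrywise calculation; the only place where non-negativity is used in an essential way is the distributivity block, which rests on the fact already noted in the paper that non-negative matrices are not nearly opposite. I would organise the argument into three short passages.

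First I would handle the closure clauses. For item 1, if $\alpha = a + A$ and $\beta = b + B$ are non-negative, one may choose representatives $a, b \geq 0$, so that $a + b \geq 0$ is a representative of $\alpha + \beta$; applied entrywise this gives $\A + \B \in \M_{m\times n}^+(\E)$. For item 5, writing $\lambda = \ell + L$ with $\ell \geq 0$, the entry $\lambda \alpha_{ij}$ contains the non-negative representative $\ell a_{ij}$, so $\lambda\A$ is non-negative entrywise.

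Next I would dispatch items 2, 3, 4, 6, 7 by direct reference. Items 2 and 4 are Proposition~\ref{algmat}. For item 3, read as $\A + O = \A$ in agreement with the monoid remark just after Proposition~\ref{algmat}, I apply that proposition with $\mathcal{O} = O$, noting that $\{0\} \subseteq (N(\A))_{ij}$ for every $i, j$. Items 6 and 7 are Proposition~\ref{algmat2}.

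For the distributivity block (items 8 and 9 together with the closing sentence on products), the key observation is that non-negative matrices are not nearly opposite. Indeed, if $\alpha = a + A$ and $\beta = b + B$ are both non-negative and zeroless, representatives $a, b > 0$ yield a positive representative $a/b$ of $\alpha/\beta$, which excludes $\alpha/\beta \subseteq -1 + \oslash$; subcases where some entry is neutricial are absorbed exactly as in the proof of Proposition~\ref{notnearlyopposite}. Hence the second alternative of parts~\ref{congmatrix1} and~\ref{nhanmatrix1} of Theorem~\ref{distributivitymatrix} delivers items 8 and 9, part~\ref{nhanmatrix2} of the same theorem delivers distributivity for the product of non-negative matrices, and Proposition~\ref{nonnegass} supplies the associativity assertion. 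The proof contains no genuine obstacle: the only step worth pausing over is confirming the \emph{not nearly opposite} condition, and for zeroless entries this reduces to a one-line inequality on representatives; every other clause is either an entrywise verification or a single invocation.
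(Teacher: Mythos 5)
Your proposal is correct and takes essentially the same route as the paper, whose proof simply cites Proposition~\ref{algmat}, Proposition~\ref{algmat2}, Theorem~\ref{distributivitymatrix} and Proposition~\ref{nonnegass}; your extra entrywise verification of the closure items and of the not-nearly-opposite condition (including reading item 3 as $\A+O=\A$) merely fills in details the paper leaves implicit.
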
 
\begin{proof}
	The theorem follows from Proposition \ref{algmat}, \ref{algmat2}, Theorem \ref{distributivitymatrix} and Propostion \ref{nonnegass}. 
\end{proof}

\section{Determinants% of matrices with external numbers
}	\label{section2c3} 

We define determinants of matrices with external numbers in the usual way through sums of signed products of entries. We show that common techniques for calculation need to be applied with care, since they often use distributivity, and thus may reduce or augment the neutrix part. As often problems arise from the absence of distributivity in the presence of (nearly) opposite terms in combination with precision. In the context of calculation of determinants and also the solution of systems the occurrence of such nearly opposite terms is natural, for one searches for zeros, though in the case of external numbers this generally results in neutrices. Information on the order of magnitude of minors and neutrix parts is useful here. In the final part we give a condition implying that determinants of triangular matrices, with a triangle of neutrices instead of zeros, still equals the product of elements on the diagonal.
\begin{definition}\cite{Jus} Let $n\in \N$ be standard. Let $\A=(\alpha_{ij})$ be an $n\times n$ matrix over $\E$. The determinant of $\A$ is the external number defined by
\begin{equation}\label{defdet}
\det(\A)=\sum\limits_{\sigma\in S_n} \sgn(\sigma) \alpha_{1\sigma(1)}\alpha_{2\sigma(2)}\dots \alpha_{n\sigma(n)}
\end{equation}
	where $S_n$ is the set of all permutations of $\{1,\dots, n\}$. We also denote the determinant of the matrix $\A$ by $|\A|$, and often by $\Delta$. Then $\Delta_{i,j}$  the $(i,j)$ is the minor of $\Delta$, that is the determinant of $(n-1)\times (n-1)$ submatrix of $\Delta$ that results from removing the $i^{th}$ row and the $j^{th}$ column of $\Delta$.
\end{definition}

The following properties are obvious and proved using similar arguments as in classical algebra.
\begin{enumerate}[(i)]
	\item $\det(\A)=\det(\A^T)$, where $A^T$ is the transposition matrix of $\A$.
	\item If we interchange two rows (columns) of a matrix, the determinant changes its sign. 
	\item The determinant of matrix which has a row of neutrices is a neutrix. 
	\item The determinant of matrix which has two identical rows (columns) is a neutrix. 
\end{enumerate}

It is tempting to see the sum of products of external numbers of \eqref{defdet} as the set of sums of products of representatives, but this is not true in general for determinants of matrices $ n\times n $ when $ n>2 $. 

Clearly, if $ \A=(\alpha )$, with $ \alpha\in \E $, then $ \det(\alpha) =\{a |a\in\alpha \}$. For $ n=2 $, let $\A=\begin{pmatrix}
\alpha_{11}& \alpha_{12}\\
\alpha_{21} &\alpha_{22}
\end{pmatrix}$. Then  
\begin{equation}\label{detrep}
\det(\A)=\alpha_{11}\alpha_{22}-\alpha_{21}\alpha_{12}=\{a_{11}a_{22}-a_{21}a_{12}| a_{ij}\in \alpha_{ij}, 1\leq i, j\leq 2\},
\end{equation}	
being Minkowski sum of Minkowski products.
Now let $$\A=\begin{pmatrix}
\alpha_{11}& \alpha_{12}& \alpha_{13}\\
\alpha_{21} &\alpha_{22}& \alpha_{23}\\
\alpha_{31} &\alpha_{32}& \alpha_{33}\\
\end{pmatrix}.$$
Then it does not always hold that the sum of products \eqref{defdet} is equal to the sum of products of representatives; in particular this means the value given by the Rule of Sarrus does not need to correspond to the set of values given by the Rule of Sarrus applied to representatives. In fact, doing so, we do not apply the Minkovski rules properly, for we choose repeatedly the same representatives. 
We give here an example.

\begin{example}\label{Sarrus} Let
	$$\A=\begin{pmatrix}
	1+\oslash & 0&0\\
	0 &1&1+\varepsilon\\
	0& 1& 1
	\end{pmatrix}$$ with $\varepsilon \simeq 0,\varepsilon \neq 0$
	Then  $ \det(\A) =\oslash$, but the set, say $ S $, of values of the Rule of Sarrus applied to the representatives satisfies $ S=-(1+\oslash)\varepsilon$.
\end{example}

Because of subdistributivity, the Laplace expansion of a determinant along a column or a row may not be equal to the determinant. For example, if we expand the determinant in Example~\ref{Sarrus} along the first column  we obtain that
$$(1+\oslash) \det\begin{pmatrix}
1&1+\eps\\
1&1
\end{pmatrix}-0 \det\begin{pmatrix}
0& 0\\
1&1
\end{pmatrix}+0 \det\begin{pmatrix}
0& 0\\
1&1+\eps
\end{pmatrix}=-(1+\oslash)\eps\subset  \oslash.$$
So using products of representatives or the Laplace expansion possibly reduces the neutrix part, and even may turn a neutricial determinant into a zeroless value. We come back to this subject when we discuss singular and non-singular matrices in Section~\ref{inverse matrix}.

In general the Laplace expansion of a determinant along a column (row) is always included in the determinant. 

\begin{proposition}[\cite{Jus2}]\label{lei}
	Let $n\in \N$ be standard. Let $\A=(\alpha_{ij})_{n\times n}\in \M_{n}(\E)$ and $\Delta=\det(\A)$. Then for all $j\in \{1,...,n\}$, 
	$$(-1)^{j+1}\alpha_{1j}\Delta_{1,j}+\cdots+(-1)^{j+n}\alpha_{nj}\Delta_{n,j}\subseteq \Delta.$$
\end{proposition}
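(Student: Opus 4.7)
The plan is to expand each minor $\Delta_{i,j}$ via the Leibniz formula, push the scalar $\alpha_{ij}$ inside the sum using subdistributivity (Theorem~\ref{tcopti}.\ref{tcopti1}), and then regroup the resulting terms to recover the full Leibniz expansion of $\Delta$.

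Fix $j$. For each $i\in\{1,\dots,n\}$ I would write
$$\Delta_{i,j} \;=\; \sum_{\tau}\sgn(\tau)\prod_{k\neq i}\alpha_{k,\tau(k)},$$
where $\tau$ ranges over the bijections $\{1,\dots,n\}\setminus\{i\}\to\{1,\dots,n\}\setminus\{j\}$. Since $n$ is standard, the two-term subdistributivity of Theorem~\ref{tcopti}.\ref{tcopti1} extends by External Induction to this standard finite Minkowski sum, so
$$\alpha_{ij}\Delta_{i,j} \;\subseteq\; \sum_{\tau}\sgn(\tau)\,\alpha_{ij}\prod_{k\neq i}\alpha_{k,\tau(k)}.$$
Each pair $(i,\tau)$ corresponds to the unique $\sigma\in S_n$ with $\sigma(i)=j$ and $\sigma|_{\{1,\dots,n\}\setminus\{i\}}=\tau$, and a standard sign computation yields $\sgn(\sigma)=(-1)^{i+j}\sgn(\tau)$. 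Multiplying by $(-1)^{i+j}$ therefore gives
$$(-1)^{i+j}\alpha_{ij}\Delta_{i,j} \;\subseteq\; \sum_{\substack{\sigma\in S_n\\ \sigma(i)=j}}\sgn(\sigma)\prod_{k=1}^{n}\alpha_{k,\sigma(k)}.$$
Since the sets $\{\sigma\in S_n:\sigma(i)=j\}$ for $i=1,\dots,n$ partition $S_n$, adding these $n$ inclusions (using monotonicity of Minkowski addition under $\subseteq$) produces precisely the full Leibniz sum on the right, which equals $\Delta$ by definition, as required.

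The main obstacle is conceptual rather than technical: one must resist the temptation to try for equality. The inclusion may well be strict precisely because subdistributivity is in general strict in $\E$; indeed Example~\ref{Sarrus} exhibits a matrix whose Laplace expansion along the first column produces $-(1+\oslash)\varepsilon$, a proper subset of the actual determinant $\oslash$. All the other ingredients (Leibniz formula, sign rule for cofactors, bijection between cofactor terms and permutations) are entirely classical; the only novelty lies in accepting the inclusion where equality would hold in $\R$.
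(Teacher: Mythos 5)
Your argument is correct: expanding each minor by the Leibniz formula, pushing $\alpha_{ij}$ inside by iterated subdistributivity (legitimate via External Induction since $(n-1)!$ is standard), using the classical cofactor sign identity, and summing over the partition $\{\sigma\in S_n:\sigma(i)=j\}$, $i=1,\dots,n$, of $S_n$ with monotonicity of Minkowski addition yields exactly the stated inclusion. The paper itself states Proposition~\ref{lei} without proof, citing \cite{Jus2}, and your route is the natural one for it; your closing remark that the inclusion can be strict is consistent with the paper's discussion of Example~\ref{Sarrus}.
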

However, if we expand along a column (row) such that the relative uncertainty of all elements in this column are less than  or equal to those of all the remaining elements,  the equality for Laplace expansion holds.  
\begin{theorem}\label{equality of determinant}
	Let $\A=(\alpha_{ij})_{n\times n}\in \M_n(\E)$. If there exists $k\in \{1,\dots, n\}$ such that \begin{equation}\label{assumption1} \max\limits_{1\leq i\leq n} R(\alpha_{ik})\leq \min\limits_{\substack{j\not=k\\ 1\leq i,j\leq n}}R(\alpha_{ij})
	\end{equation} then $$(-1)^{k+1}\alpha_{1k}\Delta_{1,k}+\cdots+(-1)^{k+n}\alpha_{nk}\Delta_{n,k}= \Delta.$$
	
\end{theorem}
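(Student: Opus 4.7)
The plan is to prove both inclusions; since Proposition~\ref{lei} already supplies
\[(-1)^{k+1}\alpha_{1k}\Delta_{1,k}+\cdots+(-1)^{k+n}\alpha_{nk}\Delta_{n,k}\subseteq \Delta,\]
the entire task reduces to proving the reverse inclusion, in fact the equality. First, I would start from the defining sum $\Delta=\sum_{\sigma\in S_n}\sgn(\sigma)\prod_{j=1}^n\alpha_{j\sigma(j)}$ and regroup the permutations according to the row $i:=\sigma^{-1}(k)$ that is mapped onto the fixed column $k$. Writing each such $\sigma$ via a bijection $\tau\colon\{1,\ldots,n\}\setminus\{i\}\to\{1,\ldots,n\}\setminus\{k\}$ with $\sigma(i)=k$, and using the standard identity $\sgn(\sigma)=(-1)^{i+k}\sgn(\tau)$, this yields
\[\Delta=\sum_{i=1}^n(-1)^{i+k}\sum_{\tau}\sgn(\tau)\,\alpha_{ik}\prod_{j\neq i}\alpha_{j\tau(j)}.\]

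The heart of the argument is then to pull $\alpha_{ik}$ out of the inner sum over $\tau$ in order to recover $\alpha_{ik}\Delta_{i,k}$. This is a direct application of Proposition~\ref{sumexternalnumber}, which gives
\[\sum_\tau \sgn(\tau)\,\alpha_{ik}\prod_{j\neq i}\alpha_{j\tau(j)}=\alpha_{ik}\sum_\tau\sgn(\tau)\prod_{j\neq i}\alpha_{j\tau(j)}=\alpha_{ik}\Delta_{i,k},\]
provided that $R(\alpha_{ik})$ is dominated by the relative uncertainty of every term $\prod_{j\neq i}\alpha_{j\tau(j)}$ (the sign being irrelevant since $R(-\beta)=R(\beta)$). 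Verifying this domination uniformly in $\tau$ is the single step where hypothesis~\eqref{assumption1} enters.

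For that verification I would distinguish two cases. If some factor $\alpha_{j\tau(j)}$ in the product is a neutrix, then the whole product is a neutrix and its relative uncertainty is $\R$, so the inequality holds automatically. Otherwise all factors are zeroless and Lemma~\ref{relativeprecision} gives $R\bigl(\prod_{j\neq i}\alpha_{j\tau(j)}\bigr)=\max_{j\neq i}R(\alpha_{j\tau(j)})$. Since $j\neq i$ forces $\tau(j)\neq k$, each $R(\alpha_{j\tau(j)})$ is one of the values over which the minimum on the right of~\eqref{assumption1} is taken, hence is at least $\max_{i'}R(\alpha_{i'k})\geq R(\alpha_{ik})$; so the maximum is certainly at least $R(\alpha_{ik})$, as required.

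Assembling the $n$ equalities produces $\Delta=\sum_{i=1}^n(-1)^{i+k}\alpha_{ik}\Delta_{i,k}$, the desired identity. The main obstacle is really this uniform relative-uncertainty check for Proposition~\ref{sumexternalnumber}; the sign bookkeeping and the permutation reindexing are identical to the classical proof of Laplace expansion and involve no feature specific to external numbers. It is precisely the asymmetric assumption~\eqref{assumption1}, requiring the fixed column $k$ to be at least as precise as every entry outside that column, that guarantees this uniformity and thereby restores distributivity exactly where it is needed.
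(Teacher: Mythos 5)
Your proposal is correct and follows essentially the same route as the paper's proof: regroup the permutation sum of $\Delta$ by the row entry lying in column $k$, then use Lemma~\ref{relativeprecision} together with hypothesis~\eqref{assumption1} to verify the relative-uncertainty bound needed for Proposition~\ref{sumexternalnumber}, which restores distributivity and lets you factor out $\alpha_{ik}$ to recover the cofactor expansion. Your explicit neutrix-factor case split and the appeal to Proposition~\ref{lei} are harmless extras; the core argument coincides with the paper's.
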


\begin{proof} Without loss of generality, we assume that $k=1$. The Laplace expansion along column $k$ yields
	\begin{alignat}{2}\label{laplaceexpansion} 
	& \alpha_{11}\Delta_{1,1} -\alpha_{21}\Delta_{2,1}+\cdots +\alpha_{n1}(-1)^{1+n}\Delta_{n,1} \notag\\
	=&\alpha_{11}\displaystyle\sum_{\substack{\sigma\in S_n \\ \sigma(1)=1}} \big( \sgn(\sigma) \alpha_{\sigma(2)2}\cdots \alpha_{\sigma(n)n} \big) + \cdots + \notag\\
	&+  \alpha_{n1}\displaystyle\sum_{\substack{\sigma\in S_n \\ \sigma(1)=n}} \big(  \sgn(\sigma) \alpha_{\sigma(2)2}\cdots \alpha_{\sigma(n)n}\big).
	\end{alignat}	
	Put $\beta^\sigma_{i1}= \sgn(\sigma) \alpha_{\sigma(2)2}\cdots \alpha_{\sigma(n)n} $ with $\sigma\in S_{n}, \sigma(1)=i.$ We will show that $$\alpha_{i1}\displaystyle\sum_{\substack{\sigma\in S_n \\ \sigma(1)=i}}   \sgn(\sigma) \alpha_{\sigma(2)2}\cdots \alpha_{\sigma(n)n})=\displaystyle\sum_{\substack{\sigma\in S_n \\ \sigma(1)=i}}   \sgn(\sigma) \alpha_{i1} \alpha_{\sigma(2)2}\cdots \alpha_{\sigma(n)n}$$ for all $i\in \{1,\dots, n\}$.
	
	By Lemma \ref{relativeprecision} and assumption \eqref{assumption1}, 
	\begin{alignat*}{2}
	R(\alpha_{i1})\leq &\max\limits_{1\leq i\leq n} R(\alpha_{i1}) \leq  \min\limits_{\substack{1\leq r\leq n\\ 2\leq s\leq n}} R(\alpha_{rs})\\
\leq  & \min\limits_{\substack{1\leq r\leq n\\ 2\leq s\leq n\\r\not=i}} R(\alpha_{rs})\leq  \max\limits_{\substack{1\leq r\leq n\\ 2\leq s\leq n\\r\not=i}} R(\alpha_{rs}) =	R(\beta^\sigma_{i1}).
	\end{alignat*} 
	
	By Proposition \ref{sumexternalnumber}, it follows that $$\alpha_{i1}\displaystyle\sum_{\substack{\sigma\in S_n \\ \sigma(1)=i}}   \sgn(\sigma) \alpha_{\sigma(2)2}\cdots \alpha_{\sigma(n)n}=\displaystyle\sum_{\substack{\sigma\in S_n \\ \sigma(1)=i}}   \sgn(\sigma) \alpha_{i1} \alpha_{\sigma(2)2}\cdots \alpha_{\sigma(n)n}$$ for all $i\in \{1,\dots, n\}$.
	
	From \eqref{laplaceexpansion} one derives 
	\begin{alignat*}{2}%\label{laplaceexpansion1} 
	& \alpha_{11}\Delta_{1,1} -\alpha_{21}\Delta_{2,1}+\cdots +\alpha_{n1}(-1)^{1+n}\Delta_{n,1}\notag\\
	=& \left(\displaystyle\sum_{\substack{\sigma\in S_n \\ \sigma(1)=1}}   \sgn(\sigma) \alpha_{11} \alpha_{\sigma(2)2}\cdots \alpha_{\sigma(n)n}\right)  +\cdots+ \notag \\
	+& \left(\displaystyle\sum_{\substack{\sigma\in S_n \\ \sigma(1)=n}}   \sgn(\sigma) \alpha_{n1} \alpha_{\sigma(2)2}\cdots \alpha_{\sigma(n)n}\right) \\
	=&\displaystyle\sum_{\sigma\in S_n }  \sgn(\sigma) \alpha_{\sigma(1)1} \alpha_{\sigma(2)2}\cdots \alpha_{\sigma(n)n}=\det(\A).
	\end{alignat*} 
\end{proof}

We give now conditions for the validity of the property of addition and of multiplication by a scalar. 

The addition property $ \det(C)=\det(A)+\det(B) $ when $ B $ is equal to $ A $, except for one line, and $ C $ is obtained from $ A $ and $ B $ just summing with respect to this line does not hold in full generality. Indeed, let
$\A=\begin{pmatrix}
1&1 \\
1+\oslash&1+\oslash
\end{pmatrix}$, $\B=\begin{pmatrix}
-1&-1 \\
1+\oslash&1+\oslash
\end{pmatrix}$ and $\C=\begin{pmatrix}
0&0 \\
1+\oslash&1+\oslash
\end{pmatrix}$. Then $\det(\A)=\det(\B)=\oslash $, while $\det(\C)=0\neq \oslash=\det(\A)+\det(\B)$. General conditions for the addition property to hold are stated in the next proposition.
\begin{proposition}\label{plus of determinant} 
	Let $\B=(\beta_{ij})_{n\times n},\mathcal{C}=(\gamma_{ij})_{n\times n}\in \M_n(\E)$ be matrices which possibly differ at row $ k $, i.e. 
	$$\beta_{ij}=\begin{cases} \alpha_{ij} &\mbox{ if } i\not =k, j\in \{1,\dots, n\}\\
	\beta_{kj} &\mbox{ if } i=k, j\in \{1,\dots, n\}
	\end{cases} $$ $$\gamma_{ij}=\begin{cases} \alpha_{ij} &\mbox{ if } i\not= k, j\in \{1,\dots, n\}\\
	\gamma_{kj} &\mbox{ if } i=k, j\in \{1,\dots, n\},
	\end{cases}$$
	where all $ \alpha_{ij},\beta_{kj},\gamma_{kj}\in \E $. Let $\A=(\alpha_{ij})_{n\times n}\in \M_n(\E)$ be defined by
	$$\A=\begin{cases} \alpha_{ij} &\mbox{ if } i\not =k, j\in \{1,\dots, n\}\\
	\alpha_{kj}=\beta_{kj}+\gamma_{kj} &\mbox{ if } i=k, j\in \{1,\dots, n\}.
	\end{cases}  $$
	Then $$\det{\A}\subseteq \det(\B)+\det(\mathcal{C}).$$
	\\
	Moreover, if 
	\begin{equation}\label{dieukiendinhthuc} \max\limits_{\substack{1\leq i,j\leq n\\i\not=k}} R(\alpha_{ij})\leq \max\Big\{ \min\limits_{1\leq j\leq n} R(\beta_{kj}), \min\limits_{1\leq j\leq n} R(\gamma_{kj})\Big\}, 
	\end{equation} or $\beta_{kj}$ and $\gamma_{kj}$ are not nearly opposite for all $1\leq j\leq n$, then  $$\det(\A) = \det(\B)+\det(\mathcal{C}).$$
\end{proposition}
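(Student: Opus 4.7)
My strategy is to work in the Leibniz expansion and split each permutation's product using subdistributivity on the unique factor coming from row $k$, then upgrade the resulting inclusion to an equality under the stated hypotheses.

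First I would fix $\sigma\in S_n$ and write its signed Leibniz term for $\A$ as $\sgn(\sigma)\,(\beta_{k\sigma(k)}+\gamma_{k\sigma(k)})\,p_\sigma$, where $p_\sigma:=\prod_{i\neq k}\alpha_{i\sigma(i)}$ is the product of off-row factors, which is common to $\A$, $\B$, and $\mathcal{C}$ because these three matrices agree outside row $k$. Subdistributivity (Theorem \ref{tcopti}.\ref{tcopti1}) yields
$$(\beta_{k\sigma(k)}+\gamma_{k\sigma(k)})\,p_\sigma \;\subseteq\; \beta_{k\sigma(k)}\,p_\sigma+\gamma_{k\sigma(k)}\,p_\sigma.$$
Multiplication by $\sgn(\sigma)\in\{\pm 1\}$ commutes with Minkowski sums and preserves the inclusion, and summing over $\sigma\in S_n$ regroups the right-hand side as exactly $\det(\B)+\det(\mathcal{C})$, giving the first claim $\det(\A)\subseteq\det(\B)+\det(\mathcal{C})$.

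For the equality, I would promote the subdistributive inclusion to an equality for each $\sigma$ separately, using Theorem \ref{tcopti}.\ref{tcopti3} or Proposition \ref{notnearlyopposite}. By Lemma \ref{relativeprecision},
$$R(p_\sigma)=\max_{i\neq k}R(\alpha_{i\sigma(i)})\;\leq\;\max_{\substack{i\neq k\\1\leq j\leq n}}R(\alpha_{ij}).$$
Hypothesis \eqref{dieukiendinhthuc} places the right-hand side within $\max\{\min_j R(\beta_{kj}),\,\min_j R(\gamma_{kj})\}$, which in turn is at most $\max\{R(\beta_{k\sigma(k)}),R(\gamma_{k\sigma(k)})\}$ since each $\min_j$ is bounded by the value attained at $j=\sigma(k)$. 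Applying Theorem \ref{tcopti}.\ref{tcopti3} with $(\alpha,\beta,\gamma):=(\beta_{k\sigma(k)},\gamma_{k\sigma(k)},p_\sigma)$ then turns subdistributivity into equality for that permutation. Under the alternative hypothesis, non-near-oppositeness of $\beta_{kj}$ and $\gamma_{kj}$ for each $j$ feeds directly into Proposition \ref{notnearlyopposite} and yields the same equality. Summing over $\sigma\in S_n$ concludes.

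The step I expect to require the most care is the degenerate regime where some off-row entry is itself a neutrix, so that $p_\sigma$ is neutricial and $R(p_\sigma)=\R$. Then \eqref{dieukiendinhthuc} forces $\min_j R(\beta_{kj})=\R$ or $\min_j R(\gamma_{kj})=\R$, i.e., all entries of row $k$ of $\B$ or of $\mathcal{C}$ must themselves be neutrices, and the distributivity equality has to be drawn from the neutricial branch of Proposition \ref{notnearlyopposite} rather than from the relative-uncertainty bound. Handling this branch correctly also clarifies why the hypothesis is stated as a $\max$ of two $\min$'s rather than a single uniform bound.
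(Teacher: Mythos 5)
Your proposal follows essentially the same route as the paper's proof: termwise subdistributivity in the Leibniz expansion gives the inclusion, and for the equality the paper likewise isolates the off-row product (its $\lambda_\sigma$, your $p_\sigma$), computes $R(\lambda_\sigma)=\max_{i\neq k}R(\alpha_{i\sigma(i)})$ via Lemma \ref{relativeprecision}, passes from the two minima in \eqref{dieukiendinhthuc} to $\max\{R(\beta_{k\sigma(k)}),R(\gamma_{k\sigma(k)})\}$, and invokes Theorem \ref{tcopti}.\ref{tcopti3}, respectively Proposition \ref{notnearlyopposite} in the not-nearly-opposite case. Your closing concern about a neutricial $p_\sigma$ is harmless but not really a separate branch: if $R(p_\sigma)=\R$ the hypothesis forces all of row $k$ of $\B$ or of $\mathcal{C}$ to be neutricial, so $\max\{R(\beta_{kj}),R(\gamma_{kj})\}=\R$ and the relative-uncertainty criterion of Theorem \ref{tcopti}.\ref{tcopti3} still applies directly, which is how the paper's chain of inequalities covers that case without special treatment.
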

\begin{proof} By subdistributivity, we have 
	\begin{alignat*}{2} \det(\A)=&\ds_{\sigma\in S_n}\sgn(\sigma) \alpha_{1\sigma(1)}\cdots \alpha_{n\sigma(n)}\\
	=&\ds_{\sigma\in S_n}\sgn(\sigma) \alpha_{1\sigma(1)}\cdots \alpha_{(k-1)\sigma(k-1)}\big(\beta_{k\sigma(k)}+\gamma_{k\sigma(k)}\big)\alpha_{(k+1)\sigma(k+1)} \cdots \alpha_{n\sigma(n)}\\
	\subseteq & \ds_{\sigma\in S_n}\sgn(\sigma) \alpha_{1\sigma(1)}\cdots \alpha_{(k-1)\sigma(k-1)}\beta_{k\sigma(k)}\alpha_{(k+1)\sigma(k+1)} \cdots \alpha_{n\sigma(n)}\\
	+&\ds_{\sigma\in S_n}\sgn(\sigma) \alpha_{1\sigma(1)}\cdots \alpha_{(k-1)\sigma(k-1)}\gamma_{k\sigma(k)}\alpha_{(k+1)\sigma(k+1)} \cdots \alpha_{n\sigma(n)}\\
	=&\det(\B)+\det(\C).
	\end{alignat*}
	
	We now assume that  $\max\limits_{\substack{1\leq i,j\leq n\\i\not=k}} R(\alpha_{ij})\leq \max\Big\{ \min\limits_{1\leq j\leq n} R(\beta_{kj}), \min\limits_{1\leq j\leq n} R(\gamma_{kj})\Big\}$. For each $\sigma\in S_n,$ let $\lambda_{\sigma}=\sgn(\sigma)\alpha_{1\sigma(1)}\cdots \alpha_{(k-1)\sigma(k-1)}\alpha_{(k+1)\sigma(k+1)} \cdots \alpha_{n\sigma(n)}$. 
	By Lemma \ref{relativeprecision} one has  $$R(\lambda_{\sigma})=		\max\limits_{\substack{1\leq i\leq n\\i\not=k}} R(\alpha_{i\sigma(i)}).$$
	From \eqref{dieukiendinhthuc} one derives that $R(\lambda_{\sigma})\leq \max\limits_{\substack{1\leq i,j\leq n\\i\not=k}} R(\alpha_{ij})\leq \max\{R(\beta_{kj}), R(\gamma_{kj})\}$ for all $1\leq j\leq n.$ By Part \eqref{tcopti3} of Theorem \ref{tcopti} we have 
	\begin{equation}\label{equality} \lambda_{\sigma}(\beta_{kj}+\gamma_{kj})=\lambda_\sigma\beta_{kj}+\lambda_\sigma \gamma_{kj},
	\end{equation}  for all $1\leq j\leq n.$
	
	If $\beta_{kj}$ and $\gamma_{kj}$ are not nearly opposite, we also have \eqref{equality}.
	This means that for all $\sigma\in S_n,$
	\begin{alignat*}{2}& \sgn(\sigma)\alpha_{1\sigma(1)}\cdots \alpha_{(k-1)\sigma(k-1)}\big(\beta_{k\sigma(k)}+\gamma_{k\sigma(k)}\big)\alpha_{(k+1)\sigma(k+1)} \cdots \alpha_{n\sigma(n)}\\
	=& \sgn(\sigma) \alpha_{1\sigma(1)}\cdots \alpha_{(k-1)\sigma(k-1)}\beta_{k\sigma(k)}\alpha_{(k+1)\sigma(k+1)} \cdots \alpha_{n\sigma(n)}\\
	+&\sgn(\sigma) \alpha_{1\sigma(1)}\cdots \alpha_{(k-1)\sigma(k-1)}\gamma_{k\sigma(k)}\alpha_{(k+1)\sigma(k+1)} \cdots \alpha_{n\sigma(n)}.
	\end{alignat*} 
	
	As a result, 
	\begin{alignat*}{2} \det&(\A)=\ds_{\sigma\in S_n}\sgn(\sigma) \alpha_{1\sigma(1)}\cdots \alpha_{n\sigma(n)}\\
	=&\ds_{\sigma\in S_n}\sgn(\sigma) \alpha_{1\sigma(1)}\cdots \alpha_{(k-1)\sigma(k-1)}\big(\beta_{k\sigma(k)}+\gamma_{k\sigma(k)}\big)\alpha_{(k+1)\sigma(k+1)} \cdots \alpha_{n\sigma(n)}\\
	= & \ds_{\sigma\in S_n}\sgn(\sigma) \alpha_{1\sigma(1)}\cdots \alpha_{(k-1)\sigma(k-1)}\beta_{k\sigma(k)}\alpha_{(k+1)\sigma(k+1)} \cdots \alpha_{n\sigma(n)}\\
	+&\ds_{\sigma\in S_n}\sgn(\sigma) \alpha_{1\sigma(1)}\cdots \alpha_{(k-1)\sigma(k-1)}\gamma_{k\sigma(k)}\alpha_{(k+1)\sigma(k+1)} \cdots \alpha_{n\sigma(n)}\\
	=&\det(\B)+\det(\C).
	\end{alignat*}
\end{proof}
Because of subdistributivity, if  $\alpha$ is an external number, one always has 	 $ \alpha\det(\A)\subseteq\det(\beta)$, where $\B=(\beta_{ij})_{n\times n}$ with $$\beta_{ij}=\begin{cases} \alpha_{ij} &\mbox{ if } i\not=k\\
\alpha\alpha_{ij}&\mbox{ if } i=k\end{cases}$$ for all $j\in \{1,\dots, n\}$. Note that the equality may not occur. For example, let $\alpha=\oslash$ and $\A=\begin{pmatrix}
1&1
\\
1&1
\end{pmatrix}$ and let $ B $ be obtained by multiplying the first row of $\A$ by $\alpha$, i.e. $\B=\begin{pmatrix}
\oslash& \oslash \\
1&1
\end{pmatrix}$. Then $\alpha\det(\A)=0\subset \det(\B)=\oslash$. However, if the relative uncertainty of $\alpha$ is less than or equal to the relative uncertainty of all entries in $\A$, equality is obtained. 

\begin{proposition}\label{scalar multiplication of determinant} 
	Let $\alpha$ be an external number and $\A=(\alpha_{ij})_{n\times n}\in \M_{n}(\E)$. Assume that $R(\alpha)\leq \min\limits_{\substack{1\leq i\leq n\\1\leq j\leq n}} R(\alpha_{ij})$. Let $k\in \{1,\dots, n\}$ and $\B=(\beta_{ij})_{n\times n}$ with $$\beta_{ij}=\begin{cases} \alpha_{ij} &\mbox{ if } i\not=k\\
	\alpha\alpha_{ij}&\mbox{ if } i=k\end{cases}$$ for all $j\in \{1,\dots, n\}$. Then $\det(\B)=\alpha\det(\A)$.
\end{proposition}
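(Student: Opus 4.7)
The plan is to expand both $\det(\B)$ and $\alpha\det(\A)$ via the Leibniz formula and then move the scalar $\alpha$ inside each term (respectively outside the sum) using the associativity/commutativity of multiplication in $\E$ together with the distributivity criterion provided by Proposition \ref{sumexternalnumber}. For each $\sigma\in S_n$, set
\[
\lambda_{\sigma}=\sgn(\sigma)\,\alpha_{1\sigma(1)}\alpha_{2\sigma(2)}\cdots\alpha_{n\sigma(n)},
\]
so that $\det(\A)=\sum_{\sigma\in S_n}\lambda_{\sigma}$. Because multiplication on $\E$ is associative and commutative, the $\sigma$-th term of $\det(\B)$ equals $\alpha\lambda_{\sigma}$, so $\det(\B)=\sum_{\sigma\in S_n}\alpha\lambda_{\sigma}$.

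Next I would verify that the hypothesis on relative uncertainties lets us factor $\alpha$ through the sum. By Lemma \ref{relativeprecision},
\[
R(\lambda_{\sigma})=\max_{1\leq i\leq n}R(\alpha_{i\sigma(i)}),
\]
and the assumption $R(\alpha)\leq\min_{i,j}R(\alpha_{ij})$ gives $R(\alpha)\leq R(\alpha_{i\sigma(i)})$ for every $i$ and $\sigma$. Taking the maximum over $i$ and then the minimum over $\sigma$ yields $R(\alpha)\leq \min_{\sigma\in S_n}R(\lambda_{\sigma})$. Since $S_n$ is a standard finite set, Proposition \ref{sumexternalnumber} applies and gives
\[
\alpha\det(\A)=\alpha\sum_{\sigma\in S_n}\lambda_{\sigma}=\sum_{\sigma\in S_n}\alpha\lambda_{\sigma}=\det(\B),
\]
which is the desired equality.

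The only subtle point, and the one place where the hypothesis is used essentially, is in invoking Proposition \ref{sumexternalnumber}: without the uniform bound $R(\alpha)\leq R(\alpha_{i\sigma(i)})$ for all entries, distributivity may fail and we would be left only with the subdistributive inclusion $\alpha\det(\A)\subseteq\det(\B)$ (as the counterexample immediately preceding the statement shows). Everything else is bookkeeping: the associativity/commutativity of $\E$-multiplication used to move $\alpha$ into the $k$-th slot of each Leibniz product is unconditional, and the relative-uncertainty estimate for $\lambda_{\sigma}$ is exactly Lemma \ref{relativeprecision}.
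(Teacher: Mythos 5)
Your proposal is correct and follows essentially the same route as the paper's own proof: expand $\det(\B)$ by the Leibniz formula, use Lemma \ref{relativeprecision} to get $R(\lambda_\sigma)=\max_i R(\alpha_{i\sigma(i)})\geq R(\alpha)$ for every $\sigma$, and then invoke Proposition \ref{sumexternalnumber} to pull $\alpha$ out of the sum. The only cosmetic difference is that you absorb $\sgn(\sigma)$ into $\lambda_\sigma$, which changes nothing since the sign does not affect the relative uncertainty.
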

\begin{proof}
	One has 
	\begin{alignat*}{2}\det(\B)=&\ds_{\sigma\in S_n}\sgn(\sigma) \beta_{1\sigma(1)}\cdots \beta_{n\sigma(n)}\\
	=&\ds_{\sigma\in S_n}\sgn(\sigma) \alpha_{1\sigma(1)}\cdots\alpha_{(i-1)\sigma(i-1)}\alpha\alpha_{i\sigma(i)}\alpha_{(i+1)\sigma(i+1)}\cdots \alpha_{n\sigma(n)}
	\end{alignat*}
	Put $\lambda_\sigma=\alpha_{1\sigma(1)}\cdots\alpha_{(i-1)\sigma(i-1)}\alpha_{i\sigma(i)}\alpha_{(i+1)\sigma(i+1)}\cdots \alpha_{n\sigma(n)}.$\\
	Then $R(\lambda_\sigma)=\max\limits_{1\leq i\leq n} R(\alpha_{i\sigma(i)})$ by Lemma \ref{relativeprecision}. By the assumption,
	$$R(\alpha)\leq \min\limits_{1\leq i, j\leq n}R(\alpha_{ij})\leq \max\limits_{1\leq i\leq n} R(\alpha_{i\sigma(i)})=R(\lambda_\sigma)$$ for all $\sigma\in S_n.$
	By Proposition \ref{sumexternalnumber} one has
	$$\det(\B)=\alpha\ds_{\sigma\in S_n}\sgn(\sigma) \alpha_{1\sigma(1)}\cdots \alpha_{n\sigma(n)}=\alpha\det(\A).$$
\end{proof}

To study the effect of adding a multiple of one line to another, we need an adaptation of the notion of reduced matrix. We will give estimations for the determinant of a reduced matrix and its neutrix, as well as it minors. 

\begin{definition}\rm A matrix $\A=(\alpha_{ij})_{m\times n}\in \M_{m,n}(\E)$, with $\left|\overline{\alpha}\right|=1+A$ and $A\subseteq \oslash$, is called  a  {\em reduced matrix}. \index{matrix!reduced}  
\end{definition}

Let $A\in \M_{m,n}(\E)$. We denote by $M_{i_1\dots i_k,j_1\dots j_k}$ the $k\times k$ minor of $\A$ by containing only the rows $\{i_1\dots i_k\}$ and columns $\{j_1\dots j_k\}$ from $\A$. We may denote this minor also by $ \Delta_{{i_1\dots i_k,j_1\dots j_k}} $.

Reduced matrices have in each column (row) a minor of  $(n-1)^{th}$-order at least of the same order of magnitude as the determinant. 
\begin{proposition}[\cite{Jus2}]\label{lei2} Let $n\in \N$ be standard and $\A=(\alpha_{ij})_{n\times n}\in \M_{n}(\E)$ be a  reduced square matrix of order $n$. Suppose that $\Delta=\det \A$ is zeroless. Then for each $j\in \{1, \dots, n\},$ there exists  $i\in \{1, \dots,n\}$ such that $$|\Delta_{i,j}|>\oslash\Delta.$$
\end{proposition}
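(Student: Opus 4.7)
The plan is to argue by contradiction, reducing the external-number statement to ordinary linear algebra applied to a single matrix of representatives.

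Fix a column $j\in\{1,\dots,n\}$ and write $\Delta=d+D$, where $d\neq 0$ and $D=N(\Delta)\subseteq\oslash d$ because $\Delta$ is zeroless. Suppose towards a contradiction that $|\Delta_{i,j}|\leq\oslash\Delta$ for every $i\in\{1,\dots,n\}$. Since $\Delta$ is zeroless, Proposition~\ref{propnam}.\ref{tcoptiv} gives $\oslash\Delta=|d|\oslash$, which is a neutrix, so the contradiction hypothesis is equivalent to $\Delta_{i,j}\subseteq\oslash|d|$ for every $i$.

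I would then pick representatives $a_{k\ell}\in\alpha_{k\ell}$ for each entry of $\A$ and form the real matrix $A=(a_{k\ell})$. Because $\det\A$ is defined as a Minkowski sum of Minkowski products, any such consistent choice yields a classical value lying in $\Delta$: in particular $\det A\in\Delta$, and similarly the classical $(i,j)$-minor $M_{i,j}$ of $A$ lies in the external minor $\Delta_{i,j}$. Writing $\det A=d+d'$ with $d'\in D\subseteq\oslash d$, we have that $|\det A|$ lies in $|d|+\oslash|d|$, so $|\det A|$ is appreciable with respect to $|d|$; in particular $|\det A|\notin\oslash|d|$. On the minor side, the contradiction hypothesis forces $|M_{i,j}|\in\oslash|d|$ for every $i$, while reducedness of $\A$ gives $|a_{ij}|\leq 1+\oslash$.

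The key step is the classical Laplace expansion of $\det A$ along column $j$, which, because $A$ is a genuine real matrix, is an honest equality in $\R$:
\[
|\det A|\;=\;\Bigl|\sum_{i=1}^n (-1)^{i+j}a_{ij}M_{i,j}\Bigr|\;\leq\;\sum_{i=1}^n |a_{ij}|\,|M_{i,j}|.
\]
Each summand on the right is the product of a real number in $1+\oslash$ with one in $\oslash|d|$, hence itself lies in $\oslash|d|$; since $n$ is standard, the full sum is still in $\oslash|d|$. Thus $|\det A|\in\oslash|d|$, contradicting the previous paragraph and proving the proposition.

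The main obstacle is conceptual rather than computational: one must distinguish the external-number Laplace expansion of Proposition~\ref{lei}, which yields only the inclusion $\sum_i(-1)^{i+j}\alpha_{ij}\Delta_{i,j}\subseteq\Delta$ because of subdistributivity and is too weak to transfer magnitude information from the minors back to $\Delta$, from the classical Laplace expansion applied to a single matrix of representatives, which is a true equality. The proof pivots on the observation that a zeroless external determinant is controlled by any one of its real representatives up to an error inside $\oslash|d|$, so ordinary real linear algebra, together with the triangle inequality, suffices to close the argument.
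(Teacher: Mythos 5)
Your argument is correct, but it follows a genuinely different route from the paper's. The paper stays entirely at the external level: it invokes Proposition~\ref{lei} to get that the external Laplace expansion along the chosen column is \emph{included} in $\Delta$, notes that under the contradiction hypothesis each term $\alpha_{ij}\Delta_{i,j}\subseteq(1+\oslash)\oslash\Delta=\oslash\Delta$, and concludes that this nonempty expansion would lie in $\Delta\cap\oslash\Delta$, which is empty by Proposition~\ref{propnam}.\ref{giao} because $\Delta$ is zeroless. You instead descend to a single matrix of representatives and use the exact classical Laplace identity together with infinitesimal estimates; the steps check out: $\det A\in\Delta$ and $M_{i,j}\in\Delta_{i,j}$ do hold, since the Minkowski operations contain every value computed from a fixed choice of representatives (the Sarrus example only obstructs the reverse inclusion), $\oslash\Delta=|d|\oslash$ by Proposition~\ref{propnam}.\ref{tcoptiv}, your reading of the contradiction hypothesis as $\Delta_{i,j}\subseteq\oslash|d|$ matches the paper's, and a sum of standardly many elements of $\oslash|d|$, each bounded via $|a_{ij}|\leq 1+\oslash$, stays in $\oslash|d|$, contradicting $|\det A|\in|d|(1+\oslash)$. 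What each approach buys: the paper's proof is shorter and purely within the external calculus, reusing the pattern ``$X\subseteq\Delta$ and $X\subseteq\oslash\Delta$ is impossible for zeroless $\Delta$''; yours is more elementary once representatives are fixed, needing only real linear algebra, and it even gives the slightly stronger representative-level fact that for \emph{every} matrix of representatives and every column some classical minor lies outside $\oslash|d|$. One correction to your closing remark: Proposition~\ref{lei} is not ``too weak to transfer magnitude information from the minors back to $\Delta$'' --- the paper's proof does precisely that, since the inclusion combined with $\Delta\cap\oslash\Delta=\emptyset$ suffices, so the exact equality you recover at the representative level is convenient but not necessary.
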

\begin{proof} For simplicity we prove only the case  $j=1$, the other cases are proved analogously. 
	By Proposition \ref{lei} one has $$\alpha_{11}\Delta_{1,1}-\alpha_{21}\Delta_{2,1}+\cdots+\alpha_{n1}(-1)^{n+1}\Delta_{n,1}\subseteq \Delta.$$	
	Suppose that $\Delta_{i,1}\subseteq \oslash\Delta $ for all $i=1,\dots, n.$ Because the matrix is reduced,  it holds that $|\alpha_{ij}|\leq 1+\oslash $ for all $  1\leq i, j\leq n$. So $\alpha_{i1}\Delta_{i,1}\subseteq (1+\oslash)\oslash\Delta=\oslash\Delta$ for all $i=1,\dots,n.$ Consequently, $$\alpha_{11}\Delta_{1,1}-\alpha_{21}\Delta_{2,1}+\cdots+\alpha_{n1}(-1)^{n+1}\Delta_{n,1}\subseteq \oslash\Delta.$$ So $\alpha_{11}\Delta_{1,1}-\alpha_{21}\Delta_{2,1}+\cdots+\alpha_{n1}(-1)^{n+1}\Delta_{n,1} \subseteq \Delta\cap \oslash\Delta$, a contradiction to Proposition \ref{propnam}.\ref{giao}, for $\Delta$ is zeroless.
\end{proof}

The results below give an upper bound for the minors and the corresponding neutrix parts of a reduced matrix.

\begin{proposition}\label{cdt} Let $n\in \N$ be standard and  $\A=(\alpha_{ij})_{n\times n}\in \M_{n}(\E)$ be a reduced matrix.
	Let $k\in\{1,\dots, n\}$   and   $1\leq i_1<\dots<i_k\leq n,\  1\leq j_1<\dots<j_k\leq n$. Then 
	$$\Delta_{i_1\dots i_k,j_1\dots j_k}\subset \pounds.$$
\end{proposition}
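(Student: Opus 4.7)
The plan is to bound each entry and then each product in the Leibniz expansion of the minor, using that $\pounds$ is closed under finite products and sums.

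First I would extract from the definition of reduced matrix that every entry lies in $\pounds$. Since $|\overline{\alpha}|=1+A$ with $A\subseteq\oslash$, and $|\alpha_{ij}|\leq|\overline{\alpha}|$ for all $i,j$, the order relation gives that every real representative of $|\alpha_{ij}|$ is bounded above by $1+\oslash$, hence is limited. Thus $\alpha_{ij}\subseteq\pounds$ for all $i,j$, and in particular this applies to the entries of the $k\times k$ submatrix determining $\Delta_{i_1\dots i_k,j_1\dots j_k}$.

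Next I would expand the minor by the Leibniz formula:
$$\Delta_{i_1\dots i_k,j_1\dots j_k}=\sum_{\sigma\in S_k}\sgn(\sigma)\,\alpha_{i_{\sigma(1)}j_1}\alpha_{i_{\sigma(2)}j_2}\cdots\alpha_{i_{\sigma(k)}j_k}.$$
Each summand is a product of $k$ external numbers, each contained in $\pounds$. By Proposition~\ref{propnam}.\ref{tcnt2} applied with the neutrix $\pounds$ itself, $\pounds\cdot\pounds=\pounds$; iterating this $k-1$ times (noting that $k\leq n$ is standard so External Induction applies) gives $\pounds^{k}=\pounds$. Consequently every signed product in the expansion is contained in $\pounds$.

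Finally, the sum ranges over $S_k$, which has standard cardinality $k!$, so the sum is a standard-finite sum of elements of $\pounds$. Since $\pounds$ is an additive group, it is closed under such sums, giving $\Delta_{i_1\dots i_k,j_1\dots j_k}\subseteq\pounds$. The inclusion is strict because the minor, being a sum of $k!$ signed products of external numbers with neutrix parts included in $\oslash$, has its neutrix part of order $\oslash$ and is bounded in absolute value by a standard natural number, so it cannot exhaust $\pounds$. There is no real obstacle here; the only point requiring care is to invoke the standardness of $k$ so that both the iterated product identity $\pounds^{k}=\pounds$ and the closure of the neutrix $\pounds$ under the $k!$-term sum are legitimate applications of External Induction rather than transfinite closure arguments.
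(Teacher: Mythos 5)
Your proof is correct and follows essentially the same route as the paper: expand the minor by the Leibniz formula, bound each entry by $1+\oslash$ (so each entry lies in $\pounds$), and control the standard-finite sum of products using the standardness of $k$. The paper simply keeps the explicit bound $k!(1+\oslash)\leq\pounds$, which gives the strict inclusion at once, while you recover strictness through the same boundedness-by-a-standard-number observation at the end.
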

\begin{proof}
	Let $I=\{i_1,\dots, i_k\}, J=\{j_1,\dots, j_k\}$. Let $S_{k}$ be the set of all bijections $\sigma{:} \ I \rightarrow J.$ Because $\A$ is a reduced matrix, it follows that $|\alpha_{ij}|\leq 1+\oslash$ for all $1\leq i, j\leq n$. So
	\begin{alignat*}{2}|\Delta_{i_1 \dots i_k, j_1\dots j_k}| =& \left|\sum_{\sigma\in S_{k}} \sgn(\sigma) \alpha_{i_1\sigma(i_1)}\dots \alpha_{i_{k}\sigma(i_{k})}\right|\\
	\leq &\sum_{\sigma\in S_{k}} \left|\alpha_{i_1\sigma(i_1)}\right|\dots \left|\alpha_{i_{k}\sigma(i_{k})}\right|\leq \sum_{\sigma\in S_{k}} (1+\oslash)^{k}\\
	=& k!(1+\oslash).
	\end{alignat*} 
	Because $n\in \N$ is standard and $k\leq n$, it follows that $k!\leq  \pounds.$ Consequently, $k!(1+\oslash)\leq  \pounds.$ Hence $\Delta_{i_1 \dots i_k, j_1\dots j_k} \subset \pounds.$
\end{proof}

\begin{proposition}\label{danhgianeutrix} Let $n\in \N$ be standard and $\A=(\alpha_{ij})_{n\times n}\in \M_{n}(\E)$ be a reduced matrix. Let $\Delta=\det\A,$  $k\in\{1,\dots, n\}$   and   $1\leq i_1<\dots<i_k\leq n,\  1\leq j_1<\dots<j_k\leq n$. Then for all $1\leq k\leq n,$ one has
	$$N\left(\Delta_{i_1\dots i_k,j_1\dots j_k} \right)\subseteq \oA.$$ 
	In particular $N(\Delta)\subseteq \oA.$
\end{proposition}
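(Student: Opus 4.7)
The plan is to bound the neutrix part of each minor by first bounding the neutrix part of each monomial in its Leibniz expansion, and then combining monomials via the group property of neutrices (Proposition~\ref{propnam}.\ref{tcnt1}). Fix a bijection $\sigma$ from $I = \{i_1, \dots, i_k\}$ to $J = \{j_1, \dots, j_k\}$ and consider the single monomial $\mu_\sigma := \alpha_{i_1\sigma(i_1)} \cdots \alpha_{i_k\sigma(i_k)}$. Writing $\alpha_{ij} = a_{ij} + A_{ij}$ and iterating Definition~\ref{defnam}.\ref{nhan}, the monomial decomposes as the real product $a_{i_1\sigma(i_1)} \cdots a_{i_k\sigma(i_k)}$ plus a Minkowski sum $N_\sigma$ of terms of the form $\bigl(\prod_{p \in P} a_{i_p\sigma(i_p)}\bigr)\bigl(\prod_{q \in Q} A_{i_q\sigma(i_q)}\bigr)$, where $(P, Q)$ ranges over partitions of $\{1, \dots, k\}$ with $Q$ non-empty.

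Since $\A$ is reduced, every representative satisfies $|a_{ij}| \leq 1 + \oslash$, hence $a_{ij} \in \pounds$, and every entry-neutrix satisfies $A_{ij} \subseteq \oA \subseteq \oslash$. For a term with $|Q| = 1$, Proposition~\ref{propnam}.\ref{tcnt2} gives $\bigl(\prod_{p \in P} a_{i_p\sigma(i_p)}\bigr)\,A_{i_q\sigma(i_q)} \subseteq \pounds \cdot \oA = \oA$; for $|Q| \geq 2$ the product $\prod_{q \in Q} A_{i_q\sigma(i_q)}$ is contained in $\oslash \cdot \oA \subseteq \oA$, and multiplying again by a limited quantity leaves the term in $\oA$. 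Hence every summand in $N_\sigma$ is contained in $\oA$, so Proposition~\ref{propnam}.\ref{tcnt1} yields $N(\mu_\sigma) = N_\sigma \subseteq \oA$.

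Now $\Delta_{i_1 \dots i_k, j_1 \dots j_k} = \sum_{\sigma} \sgn(\sigma)\, \mu_\sigma$, so its neutrix part is the Minkowski sum of the $k!$ neutrices $N_\sigma$ (the signs being harmless since $-A = A$ for any neutrix). Because $k \leq n$ with $n$ standard, $k!$ is a standard natural number, and iterating Proposition~\ref{propnam}.\ref{tcnt1} gives $N(\Delta_{i_1\dots i_k,j_1\dots j_k}) \subseteq \oA$. The particular case $N(\Delta) \subseteq \oA$ is obtained by taking $k = n$ and $I = J = \{1, \dots, n\}$.

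The step I expect to be most delicate is the bookkeeping in the second paragraph: the essential use of the reduced hypothesis is that $a_{ij} \in \pounds$ prevents any exploder effect when a representative multiplies a neutrix, so that mixed terms never escape $\oA$. Were even one representative unlimited, $a_{ij}\,A_{rs}$ could strictly exceed $\oA$ and the whole argument would collapse; the rest is a formal application of the neutrix addition law $A + B = \max\{A,B\}$.
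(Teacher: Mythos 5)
Your proof is correct and follows essentially the same route as the paper's: both expand the minor via the Leibniz formula, bound the neutrix part of each monomial by $\oA$ using the reduced hypothesis (limited representatives, entry-neutrices contained in $\oA\subseteq\oslash$), and then sum the $k!$ (standardly many) neutrices via $A+B=\max\{A,B\}$ to get $k!\,\oA=\oA$. The only difference is that you unpack the Minkowski product of the entries explicitly, where the paper simply estimates $N(\alpha_{i_1\sigma(i_1)}\cdots\alpha_{i_k\sigma(i_k)})\subseteq N\left((1+\oA)^k\right)=\oA$.
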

\begin{proof} Let $I=\{i_1,\dots, i_k\}, J=\{j_1,\dots, j_k\}$. Let $S_{k}$ be the set of all bijections $\sigma : \ I \rightarrow J.$ Because $\A$ is a reduced matrix, it follows that $|\alpha_{ij}|\leq 1+\oA$ for all $1\leq i, j\leq n$, while $ \oA\subseteq \oslash $. So
	\begin{alignat*}{2}N\left( \Delta_{i_1 \dots i_k, j_1\dots j_k} \right)=& N\left(\sum_{\sigma\in S_{k}} \sgn(\sigma) \alpha_{i_1\sigma(i_1)}\dots \alpha_{i_{k}\sigma(i_{k})}\right)\\
	= &\sum_{\sigma\in S_k} N\left(\alpha_{i_1\sigma(i_1)}\cdots \alpha_{i_{k}\sigma(i_{k})}\right)\subseteq  \sum_{\sigma\in S_k} N\left((1+\oA)^k\right)\\
	=& \sum_{\sigma\in S_k} \oA=k!\oA=\oA.
	\end{alignat*} 	
	When $k=n$ we obtain that $N(\Delta)\subseteq \oA.$
\end{proof} 
Adding a scalar multiple of one row of a matrix of real numbers to another row does not change the value of the determinant. This does no longer hold for a matrix with external numbers, for we may blow up neutrices. For example, let $\A=\begin{pmatrix}
1&1\\
\oslash &1
\end{pmatrix}$ and $\omega$ be an unlimited number. Let $\B$ be the matrix which is obtained from the matrix $\A$  by adding a  multiple $\omega$ of the second row to the first one. Then  $\B=\begin{pmatrix}
1+\omega\oslash &1+\omega\\
\oslash &1
\end{pmatrix}$. We see that $\det(\A)=1+\oslash$ while $\det(\B)=\omega\oslash$, so a zeroless determinant is even transformed into a big neutrix.

We present a general property on how determinants behave under the addition of multiples of lines, which implies a condition of invariance. 
\begin{proposition}\label{addrow}
	Let $\A=(\alpha_{ij})_{n\times n}\in \M_{n}(\E)$ and $p, k\in \{1, \dots, n\}$. Let $\A'=(\alpha'_{ij})_{n\times n}\in \M_{n}(\E)$ where for all $j\in \{1,\dots, n\}$ we define  $$\alpha'_{ij}=\begin{cases}
	\alpha_{ij} & \mbox{ if } i\not=k\\
	\alpha_{kj} +\lambda \alpha_{pj} &\mbox{ if } i=k
	\end{cases}.$$ i.e., we add a multiple $ \lambda\in \E $ of the $p^{th}$ row to the $k^{th}$ row. Assume that $R(\lambda)\leq \min\limits_{\substack{1\leq i\leq n\\1\leq j\leq n}} R(\alpha_{ij})$ and   $|\overline{\alpha}|=\max\limits_{\substack{1\leq i\leq n\\1\leq j\leq n}}|\alpha_{ij}|$ is zeroless. Then $$\det(\A')\subseteq \det(\A)+\lambda \overline{\alpha}^{n-1}\overline{A}.$$
 As a result, if $\lambda \overline{\alpha}^{n-1}\overline{A}\subseteq N(\det(\A))$ then $\det(\A')=\det(\A)$.
\end{proposition}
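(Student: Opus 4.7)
The plan is to expand $\det(\A')$ by the Leibniz formula, separate the perturbed row via subdistributivity, factor $\lambda$ out of the resulting sum, and estimate the surplus as the determinant of an auxiliary matrix with two identical rows. For each $\sigma\in S_n$, write $P_\sigma=\prod_{i\neq k}\alpha_{i\sigma(i)}$. Since $\alpha'_{k\sigma(k)}=\alpha_{k\sigma(k)}+\lambda\alpha_{p\sigma(k)}$, subdistributivity (Theorem~\ref{tcopti}.\ref{tcopti1}) gives
\begin{equation*}
P_\sigma(\alpha_{k\sigma(k)}+\lambda\alpha_{p\sigma(k)})\subseteq P_\sigma\alpha_{k\sigma(k)}+\lambda\alpha_{p\sigma(k)}P_\sigma,
\end{equation*}
and summing with signs over $\sigma$ yields $\det(\A')\subseteq\det(\A)+T$ with $T=\sum_\sigma\sgn(\sigma)\lambda\alpha_{p\sigma(k)}P_\sigma$.

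To factor $\lambda$ out of $T$, observe that by Lemma~\ref{relativeprecision} each product $\alpha_{p\sigma(k)}P_\sigma$ has relative uncertainty equal to the maximum over its $n$ factors, which is at least $\min_{ij}R(\alpha_{ij})\geq R(\lambda)$; Proposition~\ref{sumexternalnumber} therefore gives $T=\lambda\sum_\sigma\sgn(\sigma)\alpha_{p\sigma(k)}P_\sigma=\lambda\det(\A'')$, where $\A''$ is the matrix obtained from $\A$ by replacing row $k$ with row $p$, so that rows $k$ and $p$ of $\A''$ coincide. I then bound $\det(\A'')$ by expanding each signed product $\prod_i\alpha''_{i\sigma(i)}$ into its real part and the cross-terms involving at least one neutrix factor $A_{rs}$: the latter lie in $|\overline{\alpha}|^{n-1}\overline{A}$ since $|\overline{\alpha}|$ is zeroless and dominates every entry in absolute value, while the real parts sum to zero by the classical vanishing for matrices with two equal rows. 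Because $n!$ is standard and $\pounds\cdot\overline{A}=\overline{A}$ by Proposition~\ref{propnam}.\ref{tcnt2}, the Minkowski sum of the $n!$ neutrix pieces stays inside $|\overline{\alpha}|^{n-1}\overline{A}$, so $\det(\A'')\subseteq|\overline{\alpha}|^{n-1}\overline{A}$ and $\det(\A')\subseteq\det(\A)+\lambda\overline{\alpha}^{n-1}\overline{A}$.

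For the consequence, the hypothesis $\lambda\overline{\alpha}^{n-1}\overline{A}\subseteq N(\det(\A))$ forces the right-hand side to collapse to $\det(\A)$ by absorption of a smaller neutrix, hence $\det(\A')\subseteq\det(\A)$; the reverse inclusion follows by applying the same argument to the inverse row operation on $\A'$, since $R(\lambda)\subseteq\oslash$ ensures that the extremal quantities $|\overline{\alpha}|$ and $\overline{A}$ for $\A'$ differ from those of $\A$ by at most a factor of $1+\oslash$, so the absorption hypothesis persists. The principal difficulty I foresee lies in the neutrix bookkeeping: every higher-order cross-term in the expansion of an $n$-fold product of external numbers must be checked to be absorbed into $|\overline{\alpha}|^{n-1}\overline{A}$, and one must verify that the classical cancellation of real parts survives the passage to Minkowski sums, so that only the aggregate neutrix, and no genuine real residue, survives the signed summation.
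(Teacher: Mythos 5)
Your first half is sound and is in substance the paper's own route: the paper obtains $\det(\A')\subseteq\det(\A)+\lambda\det(\A'')$ from Propositions~\ref{plus of determinant} and~\ref{scalar multiplication of determinant} (the latter is exactly where the hypothesis $R(\lambda)\leq\min R(\alpha_{ij})$ enters, as in your use of Lemma~\ref{relativeprecision} and Proposition~\ref{sumexternalnumber}), and then bounds $\det(\A'')$ by passing to the reduced matrix and citing Proposition~\ref{danhgianeutrix}; you reprove these three ingredients inline via the Leibniz expansion. Your cross-term bookkeeping for $\det(\A'')$ is a little loose (terms containing two or more neutrix factors need $A_{ij}\leq|\overline{\alpha}|$, hence $A_{ij}\subseteq\pounds\,\overline{a}$, together with $\pounds\,\overline{A}=\overline{A}$), but this does go through, and you flag it yourself. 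So the inclusion $\det(\A')\subseteq\det(\A)+\lambda\overline{\alpha}^{\,n-1}\overline{A}$ is fine.

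The genuine gap is in your argument for the equality. You justify the reverse inclusion by applying the proposition to the inverse row operation on $\A'$, claiming that $R(\lambda)\subseteq\oslash$ forces $|\overline{\alpha'}|$ and $\overline{A'}$ to differ from $|\overline{\alpha}|$ and $\overline{A}$ only by a factor $1+\oslash$, so that ``the absorption hypothesis persists''. This is false: $R(\lambda)$ controls only the relative uncertainty of $\lambda$, not its size, and $\lambda$ may be appreciable or unlimited. The paper's example immediately preceding Proposition~\ref{addrow} (adding $\omega$ times the second row of $\begin{pmatrix}1&1\\ \oslash&1\end{pmatrix}$ to the first, $\omega$ unlimited) gives $\overline{A'}=\omega\oslash$ and $|\overline{\alpha'}|\simeq\omega$, an unlimited factor away from $\overline{A}=\oslash$ and $|\overline{\alpha}|=1+\oslash$. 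Consequently the absorption condition you would need for the backward application, namely $\lambda\overline{\alpha'}^{\,n-1}\overline{A'}\subseteq N(\det(\A'))$ (together with $R(\lambda)\leq\min R(\alpha'_{ij})$ for $\A'$), does not follow from the stated hypothesis. A further, smaller defect: the inverse operation on $\A'$ does not return $\A$, since $\alpha_{kj}+\lambda\alpha_{pj}-\lambda\alpha_{pj}=\alpha_{kj}+N(\lambda\alpha_{pj})\supseteq\alpha_{kj}$; this could be repaired by monotonicity of the determinant under entrywise inclusion, but the scaling problem above cannot. A better route to the missing inclusion is to work on $\A$ itself: for each $\sigma$, with $P_\sigma=\prod_{i\neq k}\alpha_{i\sigma(i)}$, subdistributivity gives $\alpha_{k\sigma(k)}P_\sigma\subseteq(\alpha'_{k\sigma(k)}-\lambda\alpha_{p\sigma(k)})P_\sigma\subseteq\alpha'_{k\sigma(k)}P_\sigma-\lambda\alpha_{p\sigma(k)}P_\sigma$, whence $\det(\A)\subseteq\det(\A')+\lambda\overline{\alpha}^{\,n-1}\overline{A}$ with the extremal quantities of $\A$, not of $\A'$; even then one must still argue why the correcting neutrix is absorbed by $N(\det(\A'))$ and not merely by $N(\det(\A))$, a point which your proposal (and, to be fair, the paper's very terse final sentence) leaves unaddressed. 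As it stands, your proof of the final equality does not work.
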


\begin{proof}
	Let $ \A'' $ be obtained from $ \A $ by copying line $ p $ to line $ k $; then  $\A'' $ takes the form
	$$\A''=   \begin{pmatrix}
	\alpha_{11}& \alpha_{12} & \cdots &\alpha_{1n}\\
	\vdots &\vdots& \ddots & \vdots\\
	\alpha_{p1} &\alpha_{p2} &\cdots & \alpha_{pn}\\
	\vdots &\vdots& \ddots & \vdots\\
	\alpha_{p1} &\alpha_{p2} &\cdots &\alpha_{pn}\\
	\vdots &\vdots& \ddots & \vdots\\
	\alpha_{n1} &\alpha_{n2} &\cdots& \alpha_{nn} 
	\end{pmatrix}.$$ 
	Because $ |\overline{\alpha}| $ is zeroless, we may choose a representative $ a\in\alpha $ such that $ |\overline{\alpha_{ij}}/a|\leq 1+\oslash$ for all $ 1\leq i,j\leq n $. Let $ \mathcal{R} $ be obtained from $ \A'' $ by dividing every entry by $ a $, then $ \mathcal{R} $ is a reduced matrix. 
	By Proposition \ref{plus of determinant} and \ref{scalar multiplication of determinant} we have 
	\begin{equation}\label{add determinant 1} 
	\det(\A')\subseteq\det(\A)+\lambda\det(\A'').	
	\end{equation}
	Now $ \det(\A'') $ is a neutrix since $ \A'' $ has two identical rows. Also, by Proposition \ref{scalar multiplication of determinant} and \ref{danhgianeutrix}
	\begin{equation} \label{add determinant 2} \det(\A'')=\overline{a}^n \det(\mathcal{R})  
	\subseteq  \overline{a}^n \overline{A}/\overline{a}=\overline{a}^{n-1}\overline{A}=\overline{\alpha}^{n-1}\overline{A}.
	\end{equation}
The last equality holds because $\overline{\alpha}$ is zeroless.  
	From \eqref{add determinant 1} and \eqref{add determinant 2} we have $\det(\A')\subseteq \det(\A)+\lambda \overline{\alpha}^{n-1}\overline{A}$, hence $ \det(\A')= \det(\A) $ if $\lambda \overline{\alpha}^{n-1}\overline{A}\subseteq N(\det(\A))$.
\end{proof}
Observe that the first condition of Proposition~\ref{addrow} is automatically satisfied if $ \lambda\in \R $.

Classically we use Gauss-Jordan elimination to transform a determinant into a determinant of a triangular matrix, and then the determinant is the product of the elements on the diagonal. In the context of external numbers the usual techniques of Gauss-Jordan elimination generate neutrices instead of zeros, and sometimes the determinants were modified by a neutrix. 
Also to obtain the determinant of a triangular matrix it may be needed to add a neutrix to the product of the entries on the diagonal.
\begin{definition}
	Let $\A=(\alpha_{ij})_{n\times n}\in \M_n(\E)$. The matrix $\A$ is called \emph{upper triangular}  if $\alpha_{ij}$ is a neutrix for all $1\leq j<i\leq n$. The matrix $\A$ is called \emph{lower triangular} if $\alpha_{ij}$ is a neutrix for all $1\leq i<j\leq n.$ An upper triangular or lower triangular matrix is called a \emph{triangular matrix}.
\end{definition}
A simple triangular matrix such that its determinant involves a neutrix which even makes the matrix singular is given by the following example. Let $\A=\begin{pmatrix}
1&\oslash\\
\omega & 1
\end{pmatrix}$, where $\omega$ is an unlimited number. Then $1=1\cdot  1\not=\det(\A)= \omega\oslash$. Next proposition gives an upper bound for such neutrices. 
\begin{proposition}
	Let $\A=(\alpha_{ij})_{n\times n}$ be a triangular matrix. Assume that $\overline{\alpha}$ is zeroless.  If $\A$ is reduced,  
	\begin{equation}\label{det triangreduced} \det(\A)\subseteq \alpha_{11}\alpha_{22}\cdots\alpha_{nn}+\overline{A}.
	\end{equation}
	In general 
	\begin{equation*}%\label{det triangular} 
	\det(\A)\subseteq \alpha_{11}\alpha_{22}\cdots\alpha_{nn}+\overline{\alpha}^{n-1}\overline{A}.
	\end{equation*} 
	As a result, if $\overline{\alpha}^{n-1}\overline{A}\subseteq N(\alpha_{11}\alpha_{22}\cdots\alpha_{nn})$, then $\det(\A)=\alpha_{11}\alpha_{22}\cdots\alpha_{nn}. $ 
\end{proposition}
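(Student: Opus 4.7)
The plan is to expand $\det(\A)$ by the Leibniz formula and split off the identity permutation:
$$\det(\A) = \alpha_{11}\alpha_{22}\cdots\alpha_{nn} \;+\; \sum_{\substack{\sigma\in S_n\\ \sigma\neq\mathrm{id}}} \sgn(\sigma)\,\alpha_{1\sigma(1)}\alpha_{2\sigma(2)}\cdots\alpha_{n\sigma(n)},$$
understood as a Minkowski sum. The goal is then to show that every non-identity summand is included in the neutrix $|\overline{\alpha}|^{n-1}\overline{A}$, so that the total contribution of these $n!-1$ standardly many terms is also included in this neutrix.

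First I would record the combinatorial fact that for every $\sigma\neq\mathrm{id}$ there exists an index $i_0$ with $\sigma(i_0)<i_0$, and likewise some $j_0$ with $\sigma(j_0)>j_0$: otherwise $\sigma(i)\geq i$ for all $i$ combined with $\sum_{i=1}^n\sigma(i)=\sum_{i=1}^n i$ would force $\sigma=\mathrm{id}$. Assuming without loss of generality that $\A$ is upper triangular (the lower-triangular case is symmetric, using the index $j_0$ instead), the triangular hypothesis applied at $i_0$ makes $\alpha_{i_0\sigma(i_0)}$ a neutrix, and hence $\alpha_{i_0\sigma(i_0)}\subseteq\overline{A}$.

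Next I would bound the corresponding product. The remaining $n-1$ factors satisfy $|\alpha_{i\sigma(i)}|\leq|\overline{\alpha}|$, and repeated application of the rule $N\beta\subseteq|\beta|N$ for a neutrix $N$ and an external number $\beta$ (an immediate consequence of Definition~\ref{defnam}.\ref{nhan} and Proposition~\ref{propnam}.\ref{tcoptiv}, where the zerolessness of $\overline{\alpha}$ ensures the iterated factors behave cleanly) gives
$$\alpha_{1\sigma(1)}\cdots\alpha_{n\sigma(n)} \subseteq |\overline{\alpha}|^{n-1}\,\overline{A}.$$
Summing over the $n!-1$ non-identity permutations (a standard total) yields the general inclusion $\det(\A)\subseteq\alpha_{11}\cdots\alpha_{nn}+|\overline{\alpha}|^{n-1}\overline{A}$. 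In the reduced case one further has $|\overline{\alpha}|=1+\oslash$, so $|\overline{\alpha}|^{n-1}=1+\oslash$ for standard $n$ and $(1+\oslash)\overline{A}=\overline{A}$ since $\overline{A}\subseteq\oslash$; this upgrades the inclusion to \eqref{det triangreduced}.

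For the equality consequence, the hypothesis $|\overline{\alpha}|^{n-1}\overline{A}\subseteq N(\alpha_{11}\cdots\alpha_{nn})$ makes the Minkowski sum on the right collapse to $\alpha_{11}\cdots\alpha_{nn}$, yielding $\det(\A)\subseteq\alpha_{11}\cdots\alpha_{nn}$. The reverse inclusion comes for free: each non-identity product $\alpha_{1\sigma(1)}\cdots\alpha_{n\sigma(n)}$ contains $0$, because at least one of its Minkowski factors is a neutrix and hence contains $0$; so the Minkowski sum $\det(\A)$ already contains the identity term $\alpha_{11}\cdots\alpha_{nn}$ in full. The main obstacle I anticipate is the bookkeeping in the product bound, namely justifying $|\overline{\alpha}|^{n-1}\overline{A}$ without inadvertently enlarging it by an unwanted appreciable or unlimited factor; this is precisely why the hypothesis that $\overline{\alpha}$ is zeroless is invoked, together with external induction on standard $n$ to carry the neutrix absorption through all the factors.
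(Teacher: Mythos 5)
Your proposal is correct, and its skeleton is the same as the paper's: expand $\det(\A)$ by the Leibniz formula, split off the identity permutation, and observe that every $\sigma\neq\mathrm{id}$ picks up at least one below-diagonal (resp.\ above-diagonal) entry, which by triangularity is a neutrix contained in $\overline{A}$, so that all non-identity terms are neutricial. Where you differ is in how the non-reduced case is treated: the paper first proves the reduced inclusion \eqref{det triangreduced} using only $|\alpha_{ij}|\leq 1+\oslash$, and then obtains the general bound by scaling, i.e.\ dividing $\A$ by a real representative $\overline{a}\in\overline{\alpha}$ to get a reduced triangular matrix $\A'$ and using $\det(\A)=\overline{a}^{\,n}\det(\A')$ together with distributivity of real scalars over Minkowski sums. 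You instead prove the general inclusion directly, bounding each non-identity product by $|\overline{\alpha}|^{n-1}\overline{A}$ via iterated neutrix absorption ($N\beta=|\beta|N$, plus monotonicity $|\alpha_{i\sigma(i)}|M\subseteq|\overline{\alpha}|M$ for a neutrix $M$, which is where zerolessness of $\overline{\alpha}$ enters), and then recover the reduced case from $|\overline{\alpha}|^{n-1}\overline{A}=(1+\oslash)\overline{A}=\overline{A}$. Your route avoids the scaling step at the cost of the bookkeeping you flag, which is indeed the only delicate point and does go through in this calculus; the paper's route outsources that bookkeeping to the earlier estimates for reduced matrices (Propositions~\ref{cdt} and~\ref{danhgianeutrix}). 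A genuine plus of your write-up is that you justify the reverse inclusion $\alpha_{11}\cdots\alpha_{nn}\subseteq\det(\A)$ needed for the final equality (each non-identity summand contains $0$), a point the paper leaves implicit.
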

\begin{proof} Without loss of generality, we assume that $\A$ is a upper triangular matrix. We have
	\begin{alignat}{2} \label{determinant of triangular matrix} \det(\A)=&\sum\limits_{\sigma\in S_n}\sgn(\sigma) \alpha_{1\sigma(1)}\cdots\alpha_{n\sigma(n)} \notag\\
	=&\alpha_{11}\cdots\alpha_{nn} +\sum\limits_{\substack{\sigma\in S_n\\\exists i\in \{1,\dots n\},\sigma(i)\not =i}} \sgn(\sigma)\alpha_{1\sigma(1)}\cdots \alpha_{n\sigma(n)}.
	\end{alignat} 
	We consider two cases. First we assume that $\A$ is a reduced matrix. For $\sigma\in S_n$ such that there exists $i\in \{1,\dots,n\}$, $i\not=\sigma(i)$, it follows that there exists $k\in \{1,\dots, n\}$ such that $k>\sigma(k)$. Then $\alpha_{k\sigma(k)}\equiv A_{k\sigma(k)}$ is a neutrix. As a consequence, $\alpha_{1\sigma(1)}\cdots\alpha_{n\sigma(n)}$ is a neutrix. Also $|\alpha_{ij}|\leq 1+\oslash$ for all $1\leq i, j\leq n$, hence $\alpha_{1\sigma(1)}\cdots\alpha_{n\sigma(n)}\subseteq A_{k\sigma(k)}\subseteq \overline{A}.$
	
Because $|\overline{\alpha}|=1+\oslash$, we derive from \eqref{determinant of triangular matrix} that $\det(\A)\subseteq \alpha_{11}\cdots\alpha_{nn}+\overline{A}$. Using \eqref{det triangreduced}, we find that $\det(\A)\subseteq \alpha_{11}\cdots\alpha_{nn}+\overline{\alpha}^{n-1}\overline{A}$.

Second, assume that $\A$ is an arbitrary matrix such that $\overline{\alpha}$ is zeroless. Let $\overline{a}\in \overline{\alpha}$ and  $\A'=(\alpha'_{ij})$ with $\alpha'_{ij}=\alpha_{ij}/\overline{a}$ for all $ 1\leq i,j,\leq n $. Then $\A'$ is a reduced upper triangular matrix. Also  $\det(\A)=\overline{a}^n \det(\A')\subseteq \overline{a}^{n}( \alpha'_{11}\cdots\alpha'_{nn}+\overline{A}/\overline{a})=\alpha_{11}\cdots\alpha_{nn}+\overline{a}^{n-1}\overline{A}=\alpha_{11}\cdots\alpha_{nn}+\overline{\alpha}^{n-1}\overline{A}.$
\end{proof}

\section{Inverse matrices% with external numbers
}\label{inverse matrix}
The additive inverse of an external number $ \alpha $ is defined up to a neutrix, for $ \alpha-\alpha=N(\alpha) $. Proposition~\ref{algmat} shows that the additive inverse of a matrix of external numbers exists up to a neutricial matrix. When $ \alpha $ is zeroless, the  multiplicative inverse satisfies $ \alpha/\alpha=1+R(\alpha) $ with $ R(\alpha)\subseteq \oslash $. We define the multiplicative inverse of a matrix of external numbers also with respect to a neutrix contained in $ \oslash $. This neutrix is an upper bound for the precision that can be obtained and the (not unique) inverse is defined in terms of inclusion. We recall that \emph{flexible systems}, i.e. systems of linear equations with coefficients and constant term given by external numbers, are also defined for inclusions \cite{Jus}. 

The relationship between invertible matrices and non-singular matrices (matrices with zeroless determinant) is investigated, as well as the possibility to determine inverses with the help of cofactors. This happens to be possible under a quite general condition, already present in \cite{Jus} when solving non-singular systems; in particular the determinant should not be too small.
\begin{definition}
Let $\A=(\alpha_{ij})_{n\times n}\in \M_n(\E)$. 	The matrix $\A$ is called \emph{non-singular} if $\det(\A)$ is zeroless. Otherwise we call it \emph{singular}. 
\end{definition}  
\begin{definition}
	Let $\A\in \M_n(\E)$ be a square matrix, $N\subseteq \oslash$ be a neutrix and $\I_n(N)=(\delta_{ij})\in \M_n(E)$ with $\delta_{ij}=\begin{cases} 1+N &\mbox{ if }i=j\\
	N &\mbox{ if } i\not=j\end{cases}$ for all $1\leq i,j\leq n$. The matrix $\A$ is said to be \emph{ invertible with respect to $N$} if there exists a square matrix $\B=(\beta_{ij})_{n\times n}$   such that  $$\begin{cases} \A\B\subseteq \mathcal{I}_n(N),\\
	\B\A\subseteq \mathcal{I}_n(N).
	\end{cases} $$ 
	 Then $\B$ is called an \emph{inverse matrix} of $\A$ with respect to $N$ and denoted by $\A^{-1}_N$.
\end{definition}
It is clear that if $\A$ is invertible with respect to $N\subseteq \oslash$, it is invertible with respect to every neutrix $M$ with $N\subseteq M\subseteq \oslash$. In case $\A$ is a real square matrix, the inverse matrix of $\A$ with respect to $0$ becomes the classical one and we simply write $\A^{-1}$. 

The matrix $\dfrac{1}{\det(\A)}\C^T$, where $\C$ is the cofactor matrix of $\A$, is not always an inverse matrix of $\A$ with respect to a neutrix, even if $\A$ is a non-singular matrix. Indeed, let $\eps>0$ be infinitesimal and  $\A=\begin{pmatrix}
\eps & \oslash\\
0& 1
\end{pmatrix}$. Then $\det(\A)=\eps$ is zeroless, so $\A$ is non-singular. We have $\B=\dfrac{1}{\det(\A)}\C^T=\begin{pmatrix}
\frac{1}{\eps}& 0\\
\frac{\oslash}{\eps} &1
\end{pmatrix}$. This implies that $\A.\B=\begin{pmatrix}
\eps & \oslash\\
0& 1
\end{pmatrix}.\begin{pmatrix}
\dfrac{1}{\eps}& 0\\
\frac{\oslash}{\eps} &1
\end{pmatrix}=\begin{pmatrix}
1+\frac{\oslash}{\eps} & \oslash\\
\frac{\oslash}{\eps} & 1
\end{pmatrix}=\begin{pmatrix}
\frac{\oslash}{\eps} & \oslash\\
\frac{\oslash}{\eps} & 1
\end{pmatrix}\not\subseteq\mathcal{I}_2(N)$ for all $N\subseteq \oslash$.  Hence $\B$ is not an inverse matrix of $\A$.

%Note that in case $\A\in \M_n(\R)$ then the definition above coincides with the one in the classical algebra. 
% However, the definition of inverse matrix satisfies that the neutrix part of all entries in the inverse matrix is not larger than the maximal neutrix in given matrix. 

%\begin{definition} \rm 
%	Let  $\A=(\alpha_{ij})\in \M_{m,n}(\E)$. A matrix $P=(a_{ij})\in \M_{m,n}(\R)$, with $a_{ij}\in \alpha_{ij}$ for all $1\leq i\leq m, 1\leq j\leq n$, is called  a \emph{representative matrix} of  matrix $\A$. 
%\end{definition} 

\begin{theorem}
	Let $\A=(\alpha_{ij})_{2\times 2}\in \M_{2}(\E)$ be an invertible matrix with respect to a neutrix $N\subseteq \oslash$. Then $\A$ is non-singular. 
\end{theorem}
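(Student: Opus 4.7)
The plan is to reduce the external statement to a family of statements about real representatives, using the fact that for $n=2$ the determinant of an external matrix equals the Minkowski set of determinants of real representatives, as stated in equation (\ref{detrep}).

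First I would fix an inverse $\B=(\beta_{ij})_{2\times 2}$ of $\A$ with respect to $N\subseteq\oslash$ and take arbitrary real representatives $a_{ij}\in\alpha_{ij}$ and $b_{ij}\in\beta_{ij}$ for $1\le i,j\le 2$. Let $A=(a_{ij})$ and $B=(b_{ij})$ be the corresponding real matrices. Since each product $a_{ik}b_{kj}$ lies in $\alpha_{ik}\beta_{kj}$ and each sum $a_{i1}b_{1j}+a_{i2}b_{2j}$ lies in $\alpha_{i1}\beta_{1j}+\alpha_{i2}\beta_{2j}$, the real product $AB$ is entrywise contained in the Minkowski product $\A\B$, which by hypothesis is contained in $\I_2(N)$. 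Consequently the diagonal entries of $AB$ are of the form $1+\nu_{ii}$ and the off-diagonal entries are of the form $\nu_{ij}$, with every $\nu_{ij}\in N\subseteq\oslash$.

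Next, I would compute the classical real determinant
$$\det(AB)=(1+\nu_{11})(1+\nu_{22})-\nu_{12}\nu_{21}=1+\nu_{11}+\nu_{22}+\nu_{11}\nu_{22}-\nu_{12}\nu_{21}.$$
Since every $\nu_{ij}\in\oslash$, the correction terms all lie in $\oslash$, so $\det(AB)\in 1+\oslash$, which is appreciable and in particular nonzero. But classically $\det(AB)=\det(A)\det(B)$, so the real number $\det(A)=a_{11}a_{22}-a_{12}a_{21}$ must be nonzero as well, for any choice of representatives in $\A$.

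Finally, by the representation (\ref{detrep}), valid for $2\times 2$ matrices,
$$\det(\A)=\{a_{11}a_{22}-a_{21}a_{12}\mid a_{ij}\in\alpha_{ij},\ 1\le i,j\le 2\}.$$
Combined with the previous step, every element of $\det(\A)$ is nonzero, so $0\notin\det(\A)$; that is, $\det(\A)$ is zeroless and $\A$ is non-singular. The only delicate point is the first step, namely checking that the Minkowski identity $AB\in\A\B$ together with the hypothesis $\A\B\subseteq\I_2(N)$ forces every entry of the real matrix $AB$ into the prescribed neutrix translate; this is exactly where the $n=2$ identity (\ref{detrep}) is essential, and it is also the reason the argument cannot be immediately transposed to higher dimensions, where the equality between the determinant and the set of determinants of representatives fails (cf.\ Example~\ref{Sarrus}).
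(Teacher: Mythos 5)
Your proof is correct and takes essentially the same route as the paper's, which argues by contradiction: a zero in $\det(\A)$ would, by \eqref{detrep}, come from a representative matrix $P$ with $\det(P)=0$, while any representative $PQ$ of $\A\B\subseteq\I_2(N)$ must have nonzero determinant since $\det(PQ)=\det(P)\det(Q)$. One small remark: your closing comment misplaces where $n=2$ enters — the entrywise containment of $AB$ in $\A\B$ holds in any dimension; what is special to the $2\times 2$ case is the final appeal to \eqref{detrep}, identifying $\det(\A)$ with the set of determinants of representatives.
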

\begin{proof}
	Suppose that $\A$ is singular. Then $0\in \det(\A)$. By \eqref{detrep} there exists a representative matrix $P$ of $\A$ such that $\det(P)=0$. Let $Q$ be a representative matrix of $\B$. Then 
	\begin{equation}\label{determinant0}
	\det(PQ)=\det(P)\det(Q)=0.
	\end{equation}  
	On the other hand, one has $\A\B\subseteq \mathcal{I}_2(N)$. Now $PQ$ is a representative matrix of $\mathcal{I}_2(N)$, so $\det(PQ)\not=0$, contradicting \eqref{determinant0}. Hence $\A$ is non-singular. 
\end{proof}
The result above does not hold any more for $n>2$. For example, the matrix $\A=\begin{pmatrix}
1+\oslash & 0&0\\
0 &1&1+\varepsilon\\
0& 1& 1
\end{pmatrix}$ with $\varepsilon \simeq 0,\eps\not=0$ of Example~\ref{Sarrus} is invertible with respect to $\oslash$, but it is singular with $ \det(\A)=\oslash $. But being a $ 3\times 3 -$matrix, it does not need to have a singular matrix of representatives, which happens indeed.

If the matrix $ \A $ is reduced, a converse for holds if  $ \det(\A) $ is not so small to be an absorber of $\overline{A}$. A general condition will be given in terms of the relative uncertainty of \cite{Jus}.
\begin{definition}
Let $\A=(\alpha_{ij})_{n\times n}\in \M_{n}(\E)$ be such that $\overline{\alpha}$ is zeroless. Then $ R(\A)\equiv \det(\A)/\overline{\alpha}^n$ is called the \emph{relative uncertainty} of $ \A $.
\end{definition}
\begin{theorem} Let $\A=(\alpha_{ij})_{n\times n}\in \M_{n}(\E)$ be a non-singular matrix. Assume that
\begin{enumerate}
	\item $\overline{\alpha}$ is zeroless.
	\item $R(\A)$ is not an absorber of $\overline{A}.$
\end{enumerate}  Then $\A$ is invertible with respect to $\dfrac{\overline{A}}{\overline{\alpha}}$ and $\dfrac{1}{\det(\A)} \C^T$ is an inverse matrix with respect to $\dfrac{\overline{A}}{\overline{\alpha}}$ of $\A$, where $\C$ is the cofactor matrix of $\A$. 
\end{theorem}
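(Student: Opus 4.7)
The plan is to verify directly that $\B := \tfrac{1}{\det(\A)}\C^{T}$ satisfies both $\A\B \subseteq \I_{n}(\overline{A}/\overline{\alpha})$ and $\B\A \subseteq \I_{n}(\overline{A}/\overline{\alpha})$ entrywise. I fix a representative $\overline{a}\in\overline{\alpha}$, available since $\overline{\alpha}$ is zeroless, and pass to the scaled matrix $\mathcal{R}=\A/\overline{a}$, whose entries have absolute value $\leq 1+\oslash$, so that the minor estimates of Propositions~\ref{cdt} and~\ref{danhgianeutrix} apply to $\mathcal{R}$ and to its variants obtained by replacing a single row.

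The crucial observation is that $(\A\C^{T})_{ij}=\sum_{k}\alpha_{ik}(-1)^{j+k}\Delta_{j,k}$ is precisely the Laplace expansion along row $j$ of the matrix $\A'_{ij}$ obtained from $\A$ by replacing its row $j$ with a copy of row $i$, since the minors $\Delta_{j,k}$ depend only on rows other than $j$ and hence agree for $\A$ and $\A'_{ij}$. Thus by Proposition~\ref{lei}, $(\A\C^{T})_{ij}\subseteq\det(\A'_{ij})$. For $i=j$ one has $\A'_{ii}=\A$, while for $i\neq j$ the matrix $\A'_{ij}$ has two identical rows and $\det(\A'_{ij})$ is a neutrix. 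Applying Proposition~\ref{danhgianeutrix} to $\A'_{ij}/\overline{a}$ and rescaling through $\det(\A'_{ij})=\overline{a}^{n}\det(\A'_{ij}/\overline{a})$ gives $\det(\A'_{ij})\subseteq\overline{a}^{n-1}\overline{A}$, and the same argument applied to $\A$ itself gives $N(\det(\A))\subseteq\overline{a}^{n-1}\overline{A}$. Dividing by $\det(\A)$, the off-diagonal entries of $\A\B$ lie in $\overline{a}^{n-1}\overline{A}/\det(\A)$ and the diagonal entries in $1+\overline{a}^{n-1}\overline{A}/\det(\A)$.

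The core of the argument reduces to establishing the inclusion $\overline{a}^{n-1}\overline{A}/\det(\A)\subseteq \overline{A}/\overline{\alpha}$. Using Proposition~\ref{propnam}.\ref{tcoptiv} to rewrite division by $\det(\A)$ as division by a representative $d\in\det(\A)$, this amounts to $\tfrac{\overline{a}^{n}}{d}\,\overline{A}\subseteq \overline{A}$; since $\overline{a}^{n}/d$ is a representative of $1/R(\A)$, this is exactly the content of the non-absorber hypothesis, by case analysis: if $R(\A)\overline{A}=\overline{A}$ then $\overline{A}/R(\A)=\overline{A}$, while if $R(\A)\overline{A}\supsetneq\overline{A}$ then $\overline{A}/R(\A)\subsetneq\overline{A}$, so in either case $(\overline{a}^{n}/d)\overline{A}\subseteq\overline{A}$. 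The second inclusion $\B\A\subseteq\I_{n}(\overline{A}/\overline{\alpha})$ then follows by transposition, noting that the cofactor matrix of $\A^{T}$ equals $\C^{T}$: applying the result just obtained to $\A^{T}$ and transposing yields the claim, since $\I_{n}(N)$ is symmetric. I expect the main obstacle to be the careful case analysis around the non-absorber hypothesis together with the bookkeeping of powers of $\overline{a}$ and neutrix factors through the Minkowski divisions; all other ingredients are routine applications of the reduced-matrix estimates of Section~\ref{section2c3}.
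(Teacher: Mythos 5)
Your proposal breaks down at its central step, the passage ``dividing by $\det(\A)$''. The matrix $\B=\tfrac{1}{\det(\A)}\C^{T}$ is formed by multiplying each cofactor \emph{individually} by the external number $\tfrac{1}{\det(\A)}$, so $(\A\B)_{ij}=\sum_{k}\alpha_{ik}\bigl(\tfrac{1}{\det(\A)}\gamma_{jk}\bigr)$, whereas what you have bounded is $\tfrac{1}{\det(\A)}\sum_{k}\alpha_{ik}\gamma_{jk}=\tfrac{1}{\det(\A)}(\A\C^{T})_{ij}$. By subdistributivity (Theorem~\ref{tcopti}.\ref{tcopti1}, together with associativity and commutativity of multiplication) the inclusion between these two external numbers goes the wrong way: $\tfrac{1}{\det(\A)}(\A\C^{T})_{ij}\subseteq(\A\B)_{ij}$, and the inclusion is strict in general. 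So your estimate $(\A\C^{T})_{ij}\subseteq\det(\A'_{ij})$ via Proposition~\ref{lei}, followed by division, produces only a \emph{subset} of $(\A\B)_{ij}$ and gives no upper bound on it. The phenomenon is not hypothetical: for $\A=\begin{pmatrix}\eps & \oslash\\ 0& 1\end{pmatrix}$ (the example in Section~\ref{inverse matrix}) one computes $\tfrac{1}{\det(\A)}\A\C^{T}=\begin{pmatrix}1&\oslash\\0&1\end{pmatrix}\subseteq\I_{2}(\oslash)$, yet $\A\B$ has the entry $1+\oslash/\eps$ and is contained in no $\I_{2}(N)$ with $N\subseteq\oslash$. (There the absorber hypothesis fails, so the theorem is not contradicted, but it shows the logical step ``bound $\A\C^{T}$, then divide'' is invalid; the blow-up caused by dividing entrywise before summing is exactly what the hypotheses must be used to exclude, and your argument never confronts it—you invoke the non-absorber condition only for the final inclusion $\overline{a}^{\,n-1}\overline{A}/\det(\A)\subseteq\overline{A}/\overline{\alpha}$.)

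The repair is essentially the paper's proof: after reducing to a reduced matrix (your scaling step is fine, as are your uses of Propositions~\ref{cdt} and~\ref{danhgianeutrix}), one must control the neutrix of each \emph{entry} of $\B$ itself. Writing $\alpha_{ij}=a_{ij}+A_{ij}$, $\det(\A)=d+D$, $\gamma_{ij}=c_{ij}+C_{ij}$, each entry of $\B$ has neutrix part $\tfrac{1}{d}\bigl(C^{T}_{ij}+\tfrac{\gamma^{T}_{ij}D}{d}\bigr)$, which is shown to lie in $\overline{A}$ using $D\subseteq\overline{A}$, $C_{ij}\subseteq\overline{A}$, $\gamma_{ij}\subseteq\pounds$ and the fact that $d$ is not an absorber of $\overline{A}$; then one decomposes $\A=P+K$ and $\B=Q+L$ into representative and neutricial parts and bounds $\A\B=PQ+PL+KQ+KL\subseteq I_{n}+(\overline{A})_{n\times n}$ (here $PQ=I_n$ holds exactly because $P,Q$ are real and $Q=P^{-1}$). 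This entrywise control of $N(\B)$, which is where the absorber hypothesis genuinely does its work, is the missing piece of your argument; your treatment of $\B\A$ by transposition and your case analysis of the non-absorber condition are otherwise sound.
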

\begin{proof}  Note that $\overline{A}/\overline{\alpha}\subseteq \oslash$, because $\overline{\alpha}$ is zeroless.
	
 We first assume that $\A$ is a reduced, non-singular matrix. Let $\A=(\alpha_{ij})_{n\times n}$ with $\alpha_{ij}=a_{ij}+A_{ij}$. Let $P=(a_{ij})_{n\times n}$, $ K=(A_{ij})_{n\times n}$ and $\Delta=\det(\A)=d+D$ with $d=\det(P)\neq 0$. Let $ Q=(b_{ij})_{n\times n} $ be the inverse matrix of $ P $, with $ R=(c_{ij})_{n\times n} $ the matrix of cofactors, meaning that always $ b_{ij}=\frac{c^T_{ij}}{d} $. Then the cofactor matrix is of the form $\C =(c_{ij}+C_{ij})_{n\times n}\equiv (\gamma_{ij})_{n\times n}$, and we define $ M=(C_{ij})_{n\times n}$ and $\B=\dfrac{1}{\det(\A)}\C^T=(b_{ij}+B_{ij})_{n\times n}$, where 
 $ B_{ij}=\frac{1}{d}\left(C^T_{ij}+\frac{\gamma_{ij}^TD}{d}\right)$ for all $1\leq i, j\leq n$. Let $L=(B_{ij})_{n\times n}$ and let  $ I_n $ be the identity matrix of order $n$. 
 
 We show that $B_{ij}\subseteq \overline{A}\subseteq \oslash$ for all $1\leq i, j\leq n$. Observe that $ D\subseteq \overline{A} $ and $C_{ij}\subseteq \overline{A} $ for all $1\leq i, j\leq n$ by Lemma \ref{danhgianeutrix}, and $\gamma_{ij}\subseteq \pounds$ for all $1\leq i, j\leq n$ by Proposition \ref{cdt}.  So $B_{ij}\subseteq \frac{1}{d}\left(\overline{A}+\frac{\overline{A}}{d}\right)=\frac{\overline{A}}{d}+\frac{\overline{A}}{d^{2}}$ for all $1\leq i, j\leq n.$  Also  $\det(\A)/\overline{\alpha}^n =\det(\A)$ is not an absorber of $\overline{A}$, so neither is $ d $, and therefore $d\overline{A}=\overline{A}=\frac{\overline{A}}{d}$. Consequently $B_{ij}\subseteq \overline{A}\subseteq \oslash$ for all $1\leq i, j\leq n.$

Next, we prove that \begin{equation}\label{inver} 
N(\A\B)=PL+KL+QK\subseteq (\overline{A})_{n\times n}\subseteq (\oslash)_{n\times n}. 
\end{equation}
Indeed, since $P\subseteq (\pounds)_{n\times n}$ and $L\subseteq (\overline{A})_{n\times n}$, we derive that 
\begin{equation}\label{inver1} PL\subseteq (\pounds)_{n\times n}(\overline{A})_{n\times n}=(\overline{A})_{n\times n}. 
\end{equation} 
Also $K\subseteq (\overline{A})_{n\times n}$, which implies that 
\begin{equation*}%\label{inver2}
 KL\subseteq (\overline{A})_{n\times n}(\overline{A})_{n\times n}\subseteq (\overline{A})_{n\times n}.
\end{equation*} 
In addition, \begin{equation} \label{inver3} KQ=K\dfrac{1}{d} (c^T_{ij})_{n\times n}\subseteq \dfrac{1}{d}(\overline{A})_{n\times n}(\pounds)_{n\times n}= \dfrac{1}{d}(\overline{A})_{n\times n}=(\overline{A})_{n\times n}.
\end{equation}  
Then \eqref{inver} follows from \eqref{inver1}-\eqref{inver3}. 

As a consequence, we have   $\A\B=PQ+PL+KQ+KL\subseteq I_n+(\overline{A})_{n\times n}=\I_n(N)$. Similarly, we have $\B\A\subseteq \mathcal{I}_n(N)$. Hence $\B=\dfrac{1}{\det(\A)}\C^T$ is an inverse matrix of $\A$ with respect to $A$.

We now assume that  $\A=(\alpha_{ij})_{n\times n}\in \M_n(\E)$ is an arbitrary non-singular matrix such that $\overline{\alpha}$ is zeroless. Then $\A=\overline{a}\G$ where $\G=(\alpha_{ij}/\overline{a})_{n\times n}\equiv (\eta_{ij})$ is the reduced matrix and $\overline{a}\in \overline{\alpha}$. Because $\A$ is non-singular, the matrix $\G$ is non-singular. Also $\overline{\eta}=\overline{\alpha}/\overline{a}$ is zeroless. Let $\eta_{ij}=g_{ij}+G_{ij}$ for all $1\leq i, j\leq n$ and $\overline{G}=\max\limits_{1\leq i,j\leq n} G_{ij}=\dfrac{\overline{A}}{\overline{a}}$. Also $R(\A) $ is not an absorber of $\overline{A}$, hence
 \begin{equation}\label{maxneu} \overline{G}=\frac{\overline{A}}{\overline{a}}\subseteq \frac{1}{\overline{a}}\left(\frac{\det(\A)}{\overline{\alpha}^n}\overline{A}\right)= \frac{1}{\overline{a}}\left(\frac{\det(\A)}{\overline{a}^n}\overline{A}\right)=\det(\G)\frac{\overline{A}}{\overline{a}}=\det(\G)\overline{G},.
\end{equation}
implying that $\det(\G)$ is not an absorber of $\overline{G}.$ Since $ \G $ is reduced, by the above argument  $\G^{-1}=\dfrac{1}{\det(\G)} \mathcal{D}^T$ is an inverse matrix of $\G$ with respect to $\overline{G}/\overline{\eta}=\overline{G}$, where $\mathcal{D}$ is the cofactor matrix of $\G$. Let $\mathcal{H}=\dfrac{1}{\overline{a}}\G^{-1}=(h_{ij}+H_{ij})$. Then $\mathcal{H}$ is an inverse matrix of $\A$ with respect to $\overline{G}$. Indeed, $\A\dfrac{1}{\overline{a}}\G^{-1}=\overline{a} \G \dfrac{1}{\overline{a}}\G^{-1}=\G\G^{-1}\subseteq \mathcal{I}_n(\overline{G}).$ Similarly, we have $\dfrac{1}{\overline{a}}\G^{-1}\A\subseteq \mathcal{I}_n(\overline{G}).$ This means that $\dfrac{1}{\overline{a}}\G^{-1}$ is an inverse matrix of $\A$ with respect to $\overline{G}=\dfrac{\overline{A}}{\overline{a}}$. Note that $\dfrac{1}{\overline{a}}\G^{-1}=\dfrac{1}{\det{\A}}\C^T$ where $\C$ is the cofactor matrix of $\A$.

Combining, we conclude that $\A^{-1}_A=\dfrac{1}{\det{\A}}C^T$. 
\end{proof} 
In case all conditions in Theorem above hold, the choice of the representative matrix $P$ of $\A$ is arbitrary and $P^{-1}$ and is always a representative of $\A^{-1}$. The final proposition of this section is an obvious consequence of the fact that $(P^{-1})^{-1}=P$.

\begin{proposition}
	Let $\A=(\alpha_{ij})_{n\times n}\in \M_n(\E)$ be invertible matrix with respect to a neutrix $N$ and let $\left(\A^{-1}\right)_N$ be an inverse matrix with respect to $N$ of $\A$. Then $\left(\A^{-1}\right)_N$ is invertible with respect to $N$ and $\A$ is an inverse matrix of $ \A^{-1} $ with respect to $N$. %\label{inverse1}
%		\item \label{inverse2} $\A\B$ is invertible matrix with respect to $A+B$ and $(\A\B)^{-1}_{A+B}=\B^{-1}_B\A^{-1}_A.$
%	\end{enumerate}
\end{proposition}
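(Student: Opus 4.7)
The plan is to simply unpack the definition of invertibility with respect to $N$, which is manifestly symmetric in the two matrices involved. By hypothesis, $\B := (\A^{-1})_N$ satisfies
$$\A\B \subseteq \mathcal{I}_n(N) \quad \text{and} \quad \B\A \subseteq \mathcal{I}_n(N).$$
To establish that $\B$ is invertible with respect to $N$, I need to exhibit some matrix $\mathcal{X}\in \M_n(\E)$ such that $\B\mathcal{X}\subseteq \mathcal{I}_n(N)$ and $\mathcal{X}\B\subseteq \mathcal{I}_n(N)$. I would take $\mathcal{X}=\A$ and observe that the two inclusions just displayed are precisely these requirements, read in the opposite order. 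Hence $\B$ is invertible with respect to $N$, and $\A$ is an inverse of $\B$ with respect to $N$.

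The main (small) point to flag is that, unlike in the classical setting where the inverse is unique, here the inverse is defined only up to a neutrix: any matrix satisfying the two inclusions qualifies. Consequently the statement claims only that $\A$ \emph{is an} inverse of $\B$ with respect to $N$, not the unique inverse. So there is nothing to prove beyond this observation, and in particular I would not need to invoke the more delicate results of the section (cofactor formula, Propositions \ref{cdt}, \ref{danhgianeutrix}, or the non-absorber condition): the argument is purely a matter of the left-right symmetry of the defining inclusions.

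Because of this symmetry there is no genuine obstacle. If one wanted to be pedantic, the only thing worth checking is that the condition $N\subseteq \oslash$ figuring implicitly in the definition of $\mathcal{I}_n(N)$ is the same on both sides, which it trivially is. The parenthetical remark by the authors, that this mirrors the classical identity $(P^{-1})^{-1}=P$ for representative matrices, is really just commentary: the proof itself does not need to pass through representatives, and in fact would work verbatim for any binary relation defined by the conjunction of two symmetric conditions.
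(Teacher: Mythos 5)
Your proof is correct. The definition of ``invertible with respect to $N$'' is indeed symmetric: $\B$ is an inverse of $\A$ with respect to $N$ exactly when $\A\B\subseteq \mathcal{I}_n(N)$ and $\B\A\subseteq \mathcal{I}_n(N)$, and these two inclusions, read in the opposite order, are precisely the statement that $\A$ is an inverse of $\B$ with respect to $N$; taking $\mathcal{X}=\A$ as the witness for the invertibility of $\B$ settles the claim. This is, however, not quite the route the paper indicates: the authors justify the proposition as ``an obvious consequence of the fact that $(P^{-1})^{-1}=P$,'' i.e.\ they argue at the level of representative matrices, in the context where the conditions of the preceding theorem hold and $P^{-1}$ is a representative of $\A^{-1}_N$. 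Your argument buys something real here: it needs no representatives, no non-singularity, and none of the hypotheses of the adjugate theorem (reducedness, $R(\A)$ not an absorber of $\overline{A}$), so it proves the proposition in the full generality in which it is stated, for any matrix invertible with respect to $N$. The paper's representative-based remark, on the other hand, carries extra information beyond the bare statement --- namely that any representative inverse $P^{-1}$ sits inside the external inverse --- but as a proof of this particular proposition it is less direct and implicitly tied to the previous theorem. Your closing observation about non-uniqueness of the inverse is also apt: the statement only asserts that $\A$ is \emph{an} inverse of $\A^{-1}_N$, which is all the symmetry argument delivers and all that is claimed.
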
 

\section{Linear dependence and independence% of vectors with external numbers
}\label{section3c3} 

In this section we will study sets of vectors with external numbers. We will always suppose that the sets are finite and have a standard cardinality. A generalized notion of linear independence is given. We present some characterizations and verify that several common properties of independence continue to hold. 

We start by introducing some useful notions for external vectors. 

\begin{definition}
Let $\beta=(\beta_1,\dots, \beta_m)\in \E^n$. A vector $b=(b_1,\dots, b_n)$, where $b_i\in \beta_i$ for $1\leq i\leq n$, is said to be a \emph{representative} \index{vector!representative} of $\beta$. If $\overline{\beta} $ is a neutrix, $\beta$ is called  an {\em upper neutrix vector}.  

Let $ A_1,\dots, A_n $ be neutrices. Then $A\equiv(A_1,\dots, A_n)$ is called a {\em neutrix vector} and for each $1\leq k\leq n$, a vector of the form $$e^{(k)}_A=(A_1,\dots, A_{k-1}, 1+A_k, A_{k+1}, \dots, A_n)$$  is called a \emph{near unit vector}.
\end{definition}

For example, the vector $\beta=\left(\eps+\eps^2\oslash, \oslash, \eps+\eps^2\pounds\right)$ is an upper neutrix vector since $\overline{\beta}=\oslash$ is  a neutrix and the vector $\beta_1=\left(1+\eps^2\oslash, \oslash, 2+\eps\pounds\right)$ is not an upper neutrix vector, because $\overline{\beta}=2+\eps\pounds$ is zeroless.

Neutrix vectors can be seen as generalizations of the zero vector, and they are used in the following definition of linear dependence.

\begin{definition}\label{defind}   
	A set of vectors $V=\{\alpha_1,\dots, \alpha_m\}$ where $\alpha_i\in \E^n$ for $1\leq i\leq m$  is called {\em linearly dependent} if there exist real numbers $t_1,t_2,...,t_m\in \mathbb{R}$, at least one of them being non-zero, and a neutrix vector $A$ such that  $$t_1\alpha_1+t_2 \alpha_2+\cdots+t_m\alpha_m=A.$$	
	Otherwise, the set $V$ is called {\em linearly independent}.					
\end{definition}			
In case $\{\alpha_1,\dots, \alpha_m\}\subset \R^m$, the notions coincide with those in the conventional algebra.	

From definition~\ref{defind} we easily obtain the following characterization for linear independence. 	
\begin{proposition}\label{dauhieudoclaptuyentinh}
	A set $V=\{\alpha_1, \cdots, \alpha_m\}$ of vectors   in $\E^{n}$ is linearly independent if and only if the equality $t_1\alpha_1+t_2\alpha_2+\cdots+t_m\alpha_m=A$, where $A$ is a neutrix vector, implies $t_1=\cdots=t_m=0$ and $A$ is the null vector.
\end{proposition}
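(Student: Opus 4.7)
The plan is to prove the equivalence by unpacking \textbf{Definition \ref{defind}} and establishing the two implications. Both directions are essentially immediate from the definition once one takes care of the extra requirement that the neutrix vector $A$ be the null vector; the latter is the only nontrivial point.

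For the forward direction, I would assume $V$ is linearly independent and consider an arbitrary relation $t_1\alpha_1 + \cdots + t_m\alpha_m = A$, with $A=(A_1,\dots,A_n)$ a neutrix vector. If some $t_i$ were nonzero, Definition \ref{defind} would immediately give linear dependence of $V$, a contradiction; thus $t_1=\cdots=t_m=0$. To conclude that $A$ is the null vector, I would substitute the zero scalars back into the combination and use Proposition~\ref{algmat2}.\ref{0A=O} componentwise (i.e.\ $0\alpha_i$ is the zero vector of $\E^n$, since $0\cdot(a+A_i)=\{0\}$). The sum on the left-hand side then reduces to the vector all of whose entries are $\{0\}$, so $A_1=\cdots=A_n=\{0\}$, i.e.\ $A$ is the null vector.

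For the backward direction, I would argue by contraposition. Suppose $V$ is linearly dependent. By Definition~\ref{defind} there exist real numbers $t_1,\dots,t_m$, not all zero, and a neutrix vector $A$ such that $t_1\alpha_1+\cdots+t_m\alpha_m=A$. This is precisely a relation of the form hypothesized in the statement, but with some $t_i\neq 0$, so the implication "$\sum t_i\alpha_i=A \Rightarrow t_1=\cdots=t_m=0$" fails. Hence the condition of the proposition forces linear independence.

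The only step requiring any care is the observation that when all scalars vanish, the combination collapses not merely to some neutrix vector but to the genuine null vector; this rests on $0\cdot\alpha=0$ for every external number, which is part of Proposition~\ref{algmat2}. There is no other real obstacle, since everything else is a direct rewriting of Definition~\ref{defind}.
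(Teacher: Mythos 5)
Your argument is correct and follows exactly the route the paper intends: the paper offers no separate proof, treating the proposition as an immediate unpacking of Definition \ref{defind}, which is precisely what you do in both directions. Your one point of care, that $0\cdot\alpha=\{0\}$ kills all neutrix parts so that vanishing scalars force $A$ to be the null vector, is indeed the only substantive observation needed (it is Proposition \ref{algmat2}, first item), so nothing is missing.
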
						
\begin{example}\rm Let $\epsilon >0$ be infinitesimal. 
	Then the  vectors $\alpha_1=(1+\oslash,\epsilon \oslash, -2+\epsilon\pounds), \ \alpha_2=(-2+\oslash,\epsilon\pounds, 4+\epsilon\pounds)$ in $\E^3$ are linearly dependent, since $2\alpha_1+\alpha_2=(\oslash,\epsilon\pounds,\epsilon\pounds)$ is a neutrix vector.
\end{example}
\begin{example}\rm 
	The vectors $\alpha_1=(1+\oslash,\epsilon \oslash),\, \alpha_2=(\oslash,1+\epsilon\pounds)$ with $\eps>0$ in $\E^2$ are linearly independent. Indeed, let $t_1,t_2\in \R$  and  $A=(A_1, A_2)$ is a neutrix vector such that $t_1\alpha_1+t_2\alpha_2=A.$ Then there are vectors $x_1=(1+\eta,\eps\zeta)\in \alpha_1$ and $x_2=(\vartheta,1+\epsilon \lambda)\in \alpha_2$, where $\eta,\zeta,\vartheta $ are infinitesimal and $\lambda  $ is limited, such that $t_1x_1+t_2x_2=0.$ It is equivalent to the system
	$$\begin{cases} t_1 (1+\eta)+t_2\vartheta=0\\
	t_1\zeta+t_2(1+\epsilon\lambda)=0.
	\end{cases}$$
Then $ t_1=t_2=0$, because $ \det\begin{pmatrix}
1+\eta &\vartheta \\
	\zeta& 1+\epsilon\lambda
	\end{pmatrix}\neq 0  $, and $t_1\alpha_1+t_2 \alpha_2=0$.	Hence the  vectors $\alpha_1, \alpha_2$ are linearly independent.
\end{example}

The next theorem characterizes linearly independence and dependence of vectors in $\mathbb{E}^n$ via representatives.
\begin{theorem}\label{md12} Let  $$V=\{\xi_1=(\xi_{11},\dots,\xi_{1n}),\xi_2=(\xi_{21},\dots,\xi_{2n}),\dots,\xi_m=(\xi_{m1},\dots,\xi_{mn})\}\subset\mathbb{E}^n$$ be a set of vectors, with $\xi_{ij}=a_{ij}+A_{ij}$ for all $1\leq i\leq m$ and $1\leq j\leq n$. Then 
	\begin{enumerate}
		\item \label{md12i} The set $V$ of vectors in $\E^n$ is linearly dependent if and only if for all $1\leq i\leq m$, there exist representatives $x_i=(x_{i1}, \dots, x_{in})\in\R^{n}$ of $\xi_i$ such that  $x_1,\dots,x_m$ are linearly dependent.
		\item \label{md12ii} The set $V$ of vectors in $\E^n$  is linearly independent if and only if every set  $\{x_1,\dots,x_m\} $ of vectors in $\R^n$, where $x_i\in \xi_i$ for $1\leq i\leq m$,   is linearly independent.
	\end{enumerate}							
\end{theorem}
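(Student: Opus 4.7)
The plan is to prove Part \ref{md12i} directly (in both directions), and then obtain Part \ref{md12ii} as its logical contrapositive, since linear independence is by definition the negation of linear dependence, and the condition ``every set of representatives is independent'' is the negation of ``there exists a set of representatives that is dependent.''

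For the backward direction of Part \ref{md12i}, suppose we are given representatives $x_i = (x_{i1}, \dots, x_{in}) \in \xi_i$ and scalars $t_1, \dots, t_m \in \R$, not all zero, with $\sum_{i=1}^m t_i x_i = 0$. Writing $\xi_{ij} = x_{ij} + A_{ij}$, I would compute for each coordinate $j$
$$\sum_{i=1}^m t_i \xi_{ij} = \sum_{i=1}^m t_i x_{ij} + \sum_{i=1}^m t_i A_{ij} = \sum_{i=1}^m t_i A_{ij},$$
which is a neutrix. Hence $\sum_i t_i \xi_i = A$ where $A = (A_1, \dots, A_n)$ with $A_j = \sum_i t_i A_{ij}$ is a neutrix vector, giving dependence by Definition~\ref{defind}.

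The forward direction is the heart of the proof and is where the main work lies. Suppose $\sum_i t_i \xi_i = A$ for some neutrix vector $A = (A_1,\dots,A_n)$ and scalars $t_i$ not all zero. For each $j$, setting $\lambda_j = \sum_i t_i a_{ij}$ and $N_j = \sum_i t_i A_{ij}$, the equality $\lambda_j + N_j = A_j$ forces $\lambda_j \in A_j$ (since $A_j$ is a subgroup containing $0 \in N_j$), and moreover $A_j = N_j$. So $-\lambda_j \in \sum_i t_i A_{ij}$, meaning there exist elements $c_{ij} \in t_i A_{ij}$ with $\sum_i c_{ij} = -\lambda_j$. The key move is to lift each $c_{ij}$ to a representative correction $b_{ij} \in A_{ij}$ with $t_i b_{ij} = c_{ij}$: when $t_i \neq 0$ take $b_{ij} = c_{ij}/t_i \in A_{ij}$, and when $t_i = 0$ take $b_{ij} = 0$. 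Setting $x_{ij} = a_{ij} + b_{ij} \in \xi_{ij}$, we get representatives $x_i \in \xi_i$ satisfying $\sum_i t_i x_{ij} = \lambda_j - \lambda_j = 0$ in each coordinate, proving linear dependence of $\{x_1, \dots, x_m\}$ in $\R^n$.

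The main obstacle is precisely the lifting step: one must verify that the decomposition of $-\lambda_j$ into elements of the scaled neutrices $t_i A_{ij}$ can be realized by actual elements $b_{ij} \in A_{ij}$. This uses the fact that $t_i A_{ij}$ is itself a neutrix (a scalar multiple of a subgroup by a real number remains a convex subgroup), and that every element of $t_i A_{ij}$ has the required form. Once this bookkeeping is done carefully, Part \ref{md12ii} follows immediately: $V$ is linearly independent iff no witnesses of dependence exist, iff by Part \ref{md12i} no representative family is classically dependent, iff every representative family is independent.
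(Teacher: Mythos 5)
Your proposal is correct and follows essentially the same route as the paper: the backward direction is the paper's computation verbatim, the forward direction is the paper's observation that $(0,\dots,0)\in t_1\xi_1+\cdots+t_m\xi_m$ yields representatives summing to zero (which you merely spell out explicitly via the coordinate-wise lifting $b_{ij}=c_{ij}/t_i$), and Part \ref{md12ii} is obtained by contraposition in both arguments.
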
							

\begin{proof}
	\ref{md12i}. Suppose that  the vectors $\xi_1,\dots,\xi_m$ are linearly dependent. By the definition, there exist real numbers $t_1,\dots,t_m$, at least one of them being non-zero,  and a neutrix vector $A=(A_1,\dots, A_n)$ such that $$t_1\xi_1+t_2 \xi_2+\cdots+t_m\xi_m=A.$$
	Consequently $(0,...,0)\in t_1\xi_1+t_2 \xi_2+\cdots+t_m\xi_m.$ Hence there exist vectors $x_i\in\xi_i , i=1,...,m$ such that $t_1x_1+t_2 x_2+\cdots+t_m x_m=0$. That is, the set  $\{x_1,...,x_m\}$ is linearly dependent.
	
	Conversely, suppose that there exists  a linearly dependent set  of vectors $V'=\{x_1,...,x_m\}\subset \mathbb{R}^n$, with $x_i\in\xi_i$  for $1\leq i\leq m$. For $1\leq i\leq m$, let $x_i=(x_{i1},...,x_{in})$  and $\xi_{ij}=x_{ij}+X_{ij}$, where $j\in\{1,...,n\}$. There exist real numbers $t_1,...,t_m$, at least one of them being non-zero, such that $t_1x_1+t_2 x_2+\cdots+t_m x_m=0$. Let $x_i=(x_{i1},...,x_{in})$ for $1\leq i\leq m$. Then  
	\begin{equation*}%\label{bdtt4.1}
	t_1x_{1j}+\cdots+t_mx_{mj}=0\mbox{ for all $j\in\{1,...,n\}$.}
	\end{equation*}
	Then
	\begin{alignat*}{2}
	t_1\xi_{1j}+\cdots+t_m\xi_{mj}= & t_1(x_{1j}+X_{1j})+\cdots+t_m(x_{mj}+X_{mj})\\
	= & t_1x_{1j}+\cdots+t_mx_{mj}+t_1X_{1j}+\cdots+t_m X_{mj}\\
	= &t_1X_{1j}+\cdots+t_m X_{mj}\equiv A_j,
	\end{alignat*}
	where $ A_j $ is a neutrix for all $j\in\{1,...,n\}.$ Hence  $\{\xi_1,...,\xi_m\}$ is linearly dependent.
	
	\ref{md12ii}. This follows directly from Part \ref{md12i}, by contraposition.

\end{proof}						
Observe that  a  set of linearly dependent vectors  may have a set of linearly independent representative vectors. 
\begin{example}\rm 
	Let $\eps>0$ be infinitesimal. Consider the set of vectors  $$\left\{ \xi_1=(\oslash, \oslash), 	\xi_2=(0, \eps)\right\}.$$ Then $\{\xi_1, \xi_2\}$ is linearly dependent, since $\xi_1+\xi_2=(\oslash, \oslash)$. Now we take $x_1=(\eps, 0)\in \xi_1$ and $x_2=\xi_2$. Then $\{\xi_1, \xi_2\}$ is linearly independent. 
\end{example} 
Below some elementary properties of linear dependence and independence are generalized to neutrix vectors. The proofs are obvious, or readily obtained by going to representative vectors.
\begin{proposition}\label{md16} Let $S=\{\xi_1,\cdots,\xi_m\}$ be a set of vectors in $\E^n$ and $k\in \N$ be standard.
	\begin{enumerate}
		\item If $ S $ contains a neutrix vector is linearly dependent.	
		\item \label{md22}If $m>n$ the set $ S $ is linearly dependent.
		\item \label{indei} If the set $S$  is linearly dependent, any set of $k$ vectors including $S$ is linearly dependent.
		\item \label{idenii} If the set $S$ is linearly independent, any set of vectors included in $S$ is linearly independent.
	\end{enumerate} 
\end{proposition}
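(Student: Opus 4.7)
The plan is to dispatch the four parts in order, using Theorem~\ref{md12} for Part~\ref{md22} and then bootstrapping Part~\ref{indei} from the definition to get Part~\ref{idenii} by contraposition.

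For the first part, if $\xi_{i_0}\in S$ is a neutrix vector, I would simply take $t_{i_0}=1$ and $t_i=0$ for $i\neq i_0$. The resulting linear combination equals $\xi_{i_0}$ itself, which is a neutrix vector, so Definition~\ref{defind} is met directly. The only subtlety is that at least one coefficient must be nonzero, which is arranged by choice of $t_{i_0}=1$.

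For Part~\ref{md22}, I would invoke Theorem~\ref{md12}.\ref{md12i}: it suffices to exhibit \emph{some} system of representatives $x_1,\dots,x_m\in\mathbb{R}^n$ that is linearly dependent in the classical sense. Pick any $x_i\in\xi_i$ for $1\le i\le m$. Since $m>n$ and $n$ is standard, classical linear algebra over $\mathbb{R}$ guarantees that $\{x_1,\dots,x_m\}$ is linearly dependent. Applying Theorem~\ref{md12}.\ref{md12i} in the reverse direction yields that $S$ itself is linearly dependent.

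For Part~\ref{indei}, suppose $S=\{\xi_1,\dots,\xi_m\}$ is linearly dependent, witnessed by reals $t_1,\dots,t_m$ not all zero and a neutrix vector $A$ with $t_1\xi_1+\cdots+t_m\xi_m=A$. Given any enlarged set $S'=\{\xi_1,\dots,\xi_m,\xi_{m+1},\dots,\xi_k\}$, set $t_{m+1}=\cdots=t_k=0$. Since $0\cdot\xi_j$ is the zero vector for every $j$ (because $0\cdot(a+B)=\{0\}$ for any real $a$ and neutrix $B$), one obtains $t_1\xi_1+\cdots+t_k\xi_k=A$, with $A$ still a neutrix vector and not all coefficients zero. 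Part~\ref{idenii} is then immediate as the contrapositive: if $T\subseteq S$ were linearly dependent, applying Part~\ref{indei} to $T$ and the enlargement $S$ would contradict linear independence of $S$.

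No step poses a genuine obstacle; the only point that deserves care is checking that $0\cdot\xi=0$ in $\E^n$ even when $\xi$ has neutricial coordinates, so that appending zero coefficients in Part~\ref{indei} preserves the witnessing equation without inflating the right-hand side neutrix.
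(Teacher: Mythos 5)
Your proposal is correct and matches the paper's intended argument: the paper dismisses these items as ``obvious, or readily obtained by going to representative vectors,'' which is exactly what you do (direct coefficient choices for Parts 1, 3, 4, and passage to representatives via Theorem~\ref{md12}.\ref{md12i} for Part~\ref{md22}). Your check that $0\cdot\xi=0$ even for neutricial coordinates is the right small verification to make the zero-padding in Part~\ref{indei} legitimate.
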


\section{Notions of rank}\label{section4c3} 

We define the rank of a set of vectors as usual in terms of the maximal cardinality of linearly independent subsets. 

Three notions of rank of a matrix over $\E$ are given, in the form of the rank of the set of row vectors, a rank based on minors and a rank based both on the minors and the rank of a representative matrix. In general, these three notions do not match. Conditions for the equality of the ranks are presented. 
\begin{definition}\label{rank of vector} 
	Let $V=\{\xi_1,\dots, \xi_m\}$ be a set of vectors in $\E^n$. The maximal cardinality of  linearly independent subsets $ V'\subseteq V $ is called the \emph{rank} of the given set of vectors.
\end{definition}

\begin{definition}\label{hangmatran}\rm   Let $\A=(\alpha_{ij})$ be  an $m\times n$ matrix over $\E$. 
	\begin{enumerate}
		\item 	\label{row} The \emph{row-rank} of $A$ is the rank of the set of its row vectors and denoted by $r(\A)$, corresponding to the common notation rank for sets of real vectors. 
		\item 	\label{minor} The \emph{minor-rank} of  $\A$  is the largest natural number $m$ such that there exists a zeroless  minor of order $m$ of $\A$. Then we write  $\mr(\A)=m$.	
		\item \label{strict} The minor rank of $\A$ is called a \emph{strict rank}, if there exists a representative matrix $\hat{\A}$ of $\A$ such that $r(\hat{\A})=\mr(\A)$. We denote the strict rank by $\sr(\A)$.
	\end{enumerate}
\end{definition}

\begin{example} \rm Let 
	$$\A=\begin{pmatrix}
	1+\oslash & 2+\oslash & -1+\eps\pounds\\
	-2& -4+\eps & 2+\eps\oslash
	\end{pmatrix}.$$ 
	Then $ M_{12,12}=M_{12,13}=	M_{12,23}=\oslash $, while $ M_{1,1}=1+\oslash $ is zeroless. Hence $\mr(\A)=1$. It follows from the equality
	$$2(	1+\oslash, 2+\oslash, -1+\eps\pounds)+	(-2, -4+\eps,  2+\eps\oslash)=(\oslash,\oslash,\eps\pounds)
	$$ 
	that $ r(\A)=1 $. The matrix of representatives 
	$$\hat{\A}=\begin{pmatrix}
	1& 2 & -1\\
	-2& -4 & 2
	\end{pmatrix}$$
	has rank $ 1 $. Hence also $ \sr(\A)=1 $. 
\end{example}

\begin{example}\label{phanvidu} 
	The matrix $$\A=\begin{pmatrix}
	1+\oslash & 0&0\\
	0 &1&1+\varepsilon\\
	0& 1& 1
	\end{pmatrix},$$ where $\varepsilon\simeq 0,\varepsilon\neq 0$,
	of Example~\ref{Sarrus} shows that the strict rank is not always defined, and also that it is possible that the minor rank is less than the row rank. Let $\alpha_1, \alpha_2, \alpha_3$ be the row vectors of $\A$.  
	We have $\det(\A)=\oslash$, but 
	$\begin{pmatrix}		
	1&1+\varepsilon\\
	1& 1
	\end{pmatrix}$ 
	is a non-singular minor. Hence $ \mr(\A)=2 $. On the other hand, let $x_{1}=(1+\varepsilon',0,0)  $ be a representative of $ \alpha_{1} $, where $\varepsilon'\in \oslash$. Then $$\det\begin{pmatrix}
	1+\varepsilon' & 0&0\\
	0 &1&1+\varepsilon\\
	0& 1& 1
	\end{pmatrix}=-\varepsilon-\varepsilon\varepsilon'\not=0$$  It  follows that $\{x_1,\alpha_2, \alpha_3\}$  is linearly independent. Hence every matrix of representatives necessarily has rank $ 3 $. This means that the strict rank of $ \A $ is not well-defined. 
	
	Also, by Theorem \ref{md12} the set of vectors $\{\alpha_1, \alpha_{2}, \alpha_{3}\}$ is linearly independent. As a consequence $ r(\A)=3 $ and $ r(\A)>\mr(\A) $.
\end{example}

\begin{definition} \rm Let $\xi_i=(\alpha_{i1},\dots, \alpha_{in})\in \E^n, 1\leq i \leq m.$ The  matrix  $$\A=\begin{pmatrix}
	\alpha_{11} & \alpha_{12}& \cdots & \alpha_{1n}\\
	\vdots & \vdots& \ddots & \vdots \\
	\alpha_{m1} & \alpha_{m2} & \cdots & \alpha_{mn}
	\end{pmatrix}$$ is called the {\em coordinate matrix} of the given vectors and is denoted  by $[\xi_1, \dots, \xi_m]^T$. 
\end{definition}

We show that if the coordinate matrix of a set $ V $ of $n$ vectors in $ \mathbb{E}^n$ is non-singular, the set $ V $ is linearly independent, but we already saw that the converse is not true. The converse holds however for $ n\leq 2 $, as a consequence of formula~\eqref{detrep}.

\begin{theorem} \label{md14} Let  $V=\{\alpha_1,\cdots,\alpha_n\}$ be a set of $n$ vectors in $ \mathbb{E}^n$, where $\alpha_i=(\alpha_{i1},\dots, \alpha_{in})$ for $1\leq i\leq n$. Assume that the coordinate matrix is non-singular. Then $V  $ is linearly independent. 
\end{theorem}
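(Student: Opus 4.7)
The plan is to argue by contradiction, reducing the statement to a property of representatives via Theorem~\ref{md12}, and then using the non-singularity hypothesis together with the classical determinant-and-independence equivalence in $\R^n$.

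First I would suppose, toward a contradiction, that $V$ is linearly dependent. By Theorem~\ref{md12}.\ref{md12i} this is equivalent to the existence of representatives $x_i=(x_{i1},\ldots,x_{in})\in\alpha_i$ for $1\leq i\leq n$ such that the vectors $x_1,\ldots,x_n$ are linearly dependent in $\R^n$. Forming the real matrix $P=[x_1,\ldots,x_n]^T=(x_{ij})_{n\times n}$, classical linear algebra then gives $\det(P)=0$.

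The key observation is that $P$ is a matrix of representatives of $\A$, and so $\det(P)\in\det(\A)$. Indeed, for each permutation $\sigma\in S_n$ and each $i$, the element $x_{i\sigma(i)}$ belongs to $\alpha_{i\sigma(i)}$, hence by the definition of Minkowski product,
\begin{equation*}
\sgn(\sigma)\,x_{1\sigma(1)}\cdots x_{n\sigma(n)}\ \in\ \sgn(\sigma)\,\alpha_{1\sigma(1)}\cdots \alpha_{n\sigma(n)}.
\end{equation*}
Summing over $\sigma\in S_n$, a Minkowski sum of elements of these external products lies in the Minkowski sum, that is in
\begin{equation*}
\sum_{\sigma\in S_n}\sgn(\sigma)\,\alpha_{1\sigma(1)}\cdots \alpha_{n\sigma(n)}=\det(\A).
\end{equation*}
Thus $0=\det(P)\in\det(\A)$, which contradicts the assumption that $\det(\A)$ is zeroless. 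Therefore $V$ is linearly independent.

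The main conceptual point to handle carefully is precisely this inclusion $\det(P)\in\det(\A)$: although the Rule of Sarrus for $n\geq 3$ applied to a fixed matrix of representatives does \emph{not} generally fill out $\det(\A)$ (cf.~Example~\ref{Sarrus}), the one-sided inclusion \emph{does} always hold, because the Minkowski sum is taken over the external products term by term. Once this is properly justified, the rest is a one-line appeal to Theorem~\ref{md12}.\ref{md12i} and the classical characterization of linear dependence via a vanishing determinant.
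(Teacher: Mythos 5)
Your proof is correct and follows essentially the same route as the paper: argue by contradiction, use Theorem~\ref{md12}.\ref{md12i} to obtain a linearly dependent set of representatives with vanishing real determinant, and conclude that $0\in\det(\A)$, contradicting non-singularity. The only difference is cosmetic: you spell out the one-sided inclusion $\det(P)\in\det(\A)$ explicitly, whereas the paper expresses the same fact by writing $\det(\A)=\det\bigl((a_{ij})\bigr)+N\bigl(\det(\A)\bigr)$, so no gap either way.
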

\begin{proof} \rm   Let $\alpha_{ij}=a_{ij}+A_{ij}$ for all $1\leq i, j\leq n$ and let $$\A=\begin{pmatrix}
	\alpha_{11} & \alpha_{12}& \cdots & \alpha_{1n}\\
	\vdots & \vdots& \ddots & \vdots \\
	\alpha_{n1} & \alpha_{n2} & \cdots & \alpha_{nn}
	\end{pmatrix}$$
	be the coordinate matrix. Suppose that $V$ is linearly dependent. By Theorem \ref{md12} there exists a linearly dependent set of vectors $a_i=\{a_{i1},...,a_{in}\} \in \R^n, $ where $a_i\in \alpha_i$ is a representative of $\alpha_i$ for all $i\in\{1,..,n\}$. It follows that 
	\begin{alignat*}{2}
	&\det(\A) = \det\begin{pmatrix}
	a_{11}+A_{11} & a_{12}+A_{12}& \cdots & a_{1n}+A_{1n}\\
	\vdots & \vdots& \ddots & \vdots \\
	a_{n1}+A_{n1} & a_{n2}+A_{n2} & \cdots & a_{nn}+A_{nn}
	\end{pmatrix}\\
	&= \det\begin{pmatrix}	a_{11} & a_{12}& \cdots & a_{1n}\\
	\vdots & \vdots& \ddots & \vdots \\
	a_{n1} & a_{n2} & \cdots & a_{nn}
	\end{pmatrix}+N\big(\det(\A)\big)
	= 0+N\big(\det(\A)\big)=N\big(\det(\A)\big),
	\end{alignat*}
	which is a contradiction. 	
\end{proof}	
The converse holds obviously for $n=1$. For $n=2$ the converse follows from the following proposition.
\begin{proposition} \label{md14new} A set  $V=\{\alpha_1,\alpha_2\}$ in $ \mathbb{E}^2$, where $\alpha_i=(\alpha_{i1},\alpha_{i2})$ for $1\leq i\leq 2$, is linearly independent if and only if  $\det\begin{pmatrix}
	\alpha_{11} & \alpha_{12}\\
	\alpha_{21} & \alpha_{22}
	\end{pmatrix}$  
	is zeroless. 
\end{proposition}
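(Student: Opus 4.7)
The proposition asserts an equivalence, so I would prove the two implications separately, and in each direction reduce to a statement about representatives using formula~\eqref{detrep}, which is the crucial feature that distinguishes dimension~$2$ (and $1$) from higher dimensions.

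For the direction ``determinant zeroless implies linear independence'', I would simply invoke Theorem~\ref{md14}, since the hypothesis that the coordinate matrix is non-singular is exactly that $\det\begin{pmatrix} \alpha_{11} & \alpha_{12}\\ \alpha_{21} & \alpha_{22}\end{pmatrix}$ is zeroless. No extra work is needed here.

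For the converse, I would argue by contraposition. Assume the determinant is not zeroless, so it is a neutrix and in particular contains $0$. By formula~\eqref{detrep}, which for $n=2$ gives
$$\det\begin{pmatrix}\alpha_{11}&\alpha_{12}\\ \alpha_{21}&\alpha_{22}\end{pmatrix}=\{a_{11}a_{22}-a_{21}a_{12}\mid a_{ij}\in \alpha_{ij},\ 1\leq i,j\leq 2\},$$
the membership $0\in\det(\A)$ produces representatives $a_{ij}\in\alpha_{ij}$ such that $a_{11}a_{22}-a_{21}a_{12}=0$. Thus the representative vectors $a_1=(a_{11},a_{12})\in\alpha_1$ and $a_2=(a_{21},a_{22})\in\alpha_2$ have a singular coordinate matrix, so $\{a_1,a_2\}$ is linearly dependent in $\R^2$ by classical linear algebra. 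Then Theorem~\ref{md12}.\ref{md12i} applied to this linearly dependent pair of representatives gives that $\{\alpha_1,\alpha_2\}$ is linearly dependent in $\E^2$, contradicting the assumed independence.

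I do not anticipate a real obstacle here: the whole content of the proposition for $n=2$ lies in the special identity~\eqref{detrep}, which (unlike the higher-dimensional case illustrated by Example~\ref{Sarrus}) guarantees that every real number in $\det(\A)$ actually arises as the determinant of a single coherent choice of representatives. It is precisely this feature that fails for $n\geq 3$ and that makes Example~\ref{phanvidu} possible, so my proof naturally explains why the converse cannot be extended beyond $n=2$ without additional hypotheses such as the existence of a strict rank.
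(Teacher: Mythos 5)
Your proposal is correct and follows essentially the same route as the paper: the zeroless-implies-independent direction is delegated to Theorem~\ref{md14}, and the converse uses formula~\eqref{detrep} to extract representatives $a_{ij}$ with $a_{11}a_{22}-a_{21}a_{12}=0$, whence classical dependence of the representative vectors and then dependence of $\{\alpha_1,\alpha_2\}$ via Theorem~\ref{md12}, contradicting the assumed independence. No gaps; your closing remark on why the argument is confined to $n\leq 2$ matches the paper's own discussion of Example~\ref{Sarrus}.
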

\begin{proof} 
	The sufficient condition is proved in Theorem \ref{md14}. 
	Assume  that the set of vectors $\{\alpha_1,\alpha_2\}$ is linearly independent. Suppose that  $	\det(\A)=\alpha_{11}\alpha_{22}-\alpha_{21}\alpha_{12}=N$ is a neutrix. Then it follows from \eqref{detrep} that  there exist representatives $ a_{ij}\in \alpha_{ij},1\leq i,j,\leq 2 $ such that $a_{11}a_{22}-a_{21}a_{12}=0$. This implies that the set of vectors $V=\{x_1, x_2\}$ with $x_1=(a_{11}, a_{12}), x_{2}=(a_{21}, a_{22})$ is  linearly dependent. By Theorem \ref{md12}, the set of vectors $\{\alpha_1,\alpha_2\}$ is linearly dependent, a contradiction.  		
\end{proof} 	

\begin{example}\rm    The set of vectors $$\{\eta_1=(1+\oslash,2+\epsilon\oslash), \eta_2=(-1+\epsilon \oslash,\epsilon\oslash)\}\subset\mathbb{E}^2$$ is linearly independent, since
	$$\det\begin{pmatrix}
	1+\oslash&2+\epsilon\oslash\\
	-1+\epsilon \oslash&\epsilon\oslash\\
	\end{pmatrix}=2+\epsilon\oslash.$$
	
\end{example}

We show now that the minor rank is always less than or equal to the row rank, and then study the relation with the strict rank.

\begin{theorem}\label{dl1.5} Let $\A=(\alpha_{ij})_{m\times n}\in \M_{m, n}(\E)$ with $\mr(\A)=r$. Then there exists a linearly independent set of $r$ row vectors of $\A$. As a consequence $r(\A)\geq \mr(\A)$.
\end{theorem}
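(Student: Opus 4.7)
The plan is to exhibit $r$ linearly independent rows of $\A$ by exploiting the $r\times r$ zeroless minor guaranteed by the hypothesis $\mr(\A)=r$. First I would invoke Definition \ref{hangmatran}.\ref{minor} to fix row indices $i_1<\cdots<i_r$ and column indices $j_1<\cdots<j_r$ for which the submatrix $\B=(\alpha_{i_k j_l})_{1\leq k,l\leq r}$ has $\det(\B)$ zeroless. Regarding $\B$ as the coordinate matrix of its own row vectors $\beta_k=(\alpha_{i_k j_1},\dots,\alpha_{i_k j_r})\in\E^r$, Theorem \ref{md14} applies directly and yields that $\{\beta_1,\dots,\beta_r\}$ is linearly independent in $\E^r$.

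Next I would promote this independence from $\E^r$ to $\E^n$ for the corresponding full rows $\alpha_{i_k}=(\alpha_{i_k 1},\dots,\alpha_{i_k n})$ of $\A$. Suppose real numbers $t_1,\dots,t_r$ and a neutrix vector $A=(A_1,\dots,A_n)\in\E^n$ satisfy $\sum_{k=1}^{r} t_k \alpha_{i_k}=A$. Since scalar multiplication and addition of external vectors are defined componentwise, selecting the coordinates with indices $j_1,\dots,j_r$ gives
$$\sum_{k=1}^{r} t_k \beta_k=(A_{j_1},\dots,A_{j_r}),$$
which is again a neutrix vector, now in $\E^r$. By Proposition \ref{dauhieudoclaptuyentinh} and the independence of $\{\beta_1,\dots,\beta_r\}$ this forces $t_1=\cdots=t_r=0$. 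Applying Proposition \ref{dauhieudoclaptuyentinh} once more, the set $\{\alpha_{i_1},\dots,\alpha_{i_r}\}$ is linearly independent, so the row rank satisfies $r(\A)\geq r=\mr(\A)$.

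There is no serious obstacle: the entire argument is a projection-of-relations trick, with Theorem \ref{md14} providing the only nontrivial ingredient, namely the bridge from a zeroless determinant to linear independence of the row vectors of the corresponding square submatrix. The one point to handle carefully is that the right-hand side of the projected equation remains a genuine neutrix vector (each $A_{j_l}$ is a neutrix because $A$ was), which is exactly the hypothesis required to invoke Proposition \ref{dauhieudoclaptuyentinh} to conclude $t_k=0$ for all $k$; no distributivity or multiplicative subtleties from Section \ref{neutrices and external numbers} enter, since the coefficients $t_k$ are real.
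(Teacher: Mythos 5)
Your proposal is correct and follows essentially the same route as the paper: both extract the $r\times r$ zeroless minor, apply Theorem \ref{md14} to conclude independence of the truncated row vectors in $\E^r$, and then project a putative dependence relation of the full rows onto the minor's columns (noting the projected right-hand side is still a neutrix vector) to force $t_1=\cdots=t_r=0$ via Proposition \ref{dauhieudoclaptuyentinh}. The only cosmetic difference is that the paper relabels so the minor sits in the top-left corner, whereas you keep general indices $i_1,\dots,i_r$ and $j_1,\dots,j_r$.
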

\begin{proof} \rm Because $\mr(\A)=r$, we may suppose  without loss of generality that the minor $$M=\det \begin{pmatrix} \alpha_{11} & \cdots & \alpha_{1r} \\
	\vdots & \ddots & \vdots \\
	\alpha_{r1}& \cdots & \alpha_{rr}
	\end{pmatrix}  $$ 
	is zeroless. Let $\xi_i=(\alpha_{i1},\dots, \alpha_{in}),  1\leq i \leq m$ be row vectors of $\A$ and  $\xi'_i=(\alpha_{i1}, \dots, \alpha_{ir}), 1\leq i\leq m$ be vectors in $\E^r$. By Theorem \ref{md14} and the fact that $\det(M)$ is zeroless, the  set of vectors $\{\xi'_1,...,\xi'_r\}$ is linearly independent. 
	
	In order to prove that the  set of vectors $\{\xi_1,\dots, \xi_r\}$ is linearly independent, assume that $t_1 \xi_1+\cdots+ t_r \xi_r=(A_1, \dots, A_n)$, with $ A_1, \dots, A_n $ neutrices. Then $ t_1 \alpha_{1j}+t_2\alpha_{2j}+\cdots+t_r\alpha_{rj}=A_j$ for $ 1\leq j\leq n$. It follows that $t_1\xi_1'+\cdots +t_r\xi_r'=(A_1,\dots, A_r)$. Because $\{\xi_1', \dots, \xi_r'\}$ is linearly independent, it holds that $t_1=\cdots=t_r=0.$ Hence the set of vectors $\{\xi_1,...,\xi_r\} $  is linearly  independent by Proposition \ref{dauhieudoclaptuyentinh}.
\end{proof}

We show now that if the strict rank is defined, it is equal to the minor-rank and the row-rank.

\begin{theorem} \label{hanghaimatran}Let $\A$ be an $m\times n$ matrix over $\E$. If $\sr(\A)=r$, then $\mr(\A)=r(\A)=r$.
\end{theorem}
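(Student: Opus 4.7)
The plan is to combine the definition of strict rank with Theorem~\ref{dl1.5} and the representative characterization of linear dependence given in Theorem~\ref{md12}. The inequality $\mr(\A)=r$ is free from the hypothesis, and $r(\A)\geq r$ comes directly from Theorem~\ref{dl1.5}, so the only content to produce is the reverse inequality $r(\A)\leq r$.

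First I would unpack the hypothesis: $\sr(\A)=r$ means by Definition~\ref{hangmatran}.\ref{strict} that $\mr(\A)=r$ and that there exists a representative matrix $\hat{\A}$ of $\A$ with classical rank $r(\hat{\A})=r$. Then Theorem~\ref{dl1.5} immediately gives $r(\A)\geq \mr(\A)=r$, so it remains to bound $r(\A)$ from above by $r$.

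For the upper bound, I would show that any collection of $r+1$ row vectors of $\A$ is linearly dependent in the sense of Definition~\ref{defind}. Let $\xi_{i_1},\dots,\xi_{i_{r+1}}$ be such row vectors, and let $\hat{\xi}_{i_1},\dots,\hat{\xi}_{i_{r+1}}$ be the corresponding rows of $\hat{\A}$. Since these are rows of a real matrix of rank $r$, the set $\{\hat{\xi}_{i_1},\dots,\hat{\xi}_{i_{r+1}}\}\subset\R^n$ is classically linearly dependent. Now $\hat{\xi}_{i_k}$ is a representative of $\xi_{i_k}$ for each $k$, so by the ``if'' direction of Theorem~\ref{md12}.\ref{md12i} we conclude that $\{\xi_{i_1},\dots,\xi_{i_{r+1}}\}$ is linearly dependent in $\E^n$. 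Hence no set of $r+1$ rows of $\A$ is linearly independent, giving $r(\A)\leq r$.

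Combining, $r(\A)=r=\mr(\A)$, as required. There is no real obstacle here: the argument is essentially a bookkeeping exercise translating between the three notions of rank. The only point to be careful about is the direction in which Theorem~\ref{md12} is applied: we use that \emph{some} choice of representatives being dependent forces dependence of the external vectors, and this is exactly what the existence of the witnessing matrix $\hat{\A}$ in the definition of strict rank supplies for us simultaneously for all rows.
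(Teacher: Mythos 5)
Your proposal is correct and follows essentially the same route as the paper: $r(\A)\geq r$ from Theorem~\ref{dl1.5}, and the upper bound by pushing classical dependence of rows of the witnessing representative matrix $\hat{\A}$ up to the external rows via Theorem~\ref{md12}.\ref{md12i}. Two small differences are worth noting. First, you take $\mr(\A)=r$ directly from Definition~\ref{hangmatran}.\ref{strict}, whereas the paper re-derives it by observing that every minor of order $k>r$ of $\A$ contains the vanishing determinant of the corresponding submatrix of $\hat{\A}$ and is therefore a neutrix; with the definition read literally your shortcut is legitimate. Second, for the bound $r(\A)\leq r$ the paper only verifies dependence of the sets $\{\alpha_1,\dots,\alpha_r,\alpha_i\}$ containing a fixed block of $r$ rows, while you verify it for \emph{every} $(r+1)$-subset of rows; since the row rank is defined through arbitrary subsets and no exchange lemma is available in the external setting, your version is the more complete way to close the argument (together with the fact, Proposition~\ref{md16}, that subsets of independent sets are independent), and it costs nothing extra because any $r+1$ rows of the rank-$r$ real matrix $\hat{\A}$ are classically dependent.
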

\begin{proof} First, because $\sr(\A)=r$, there exists a zeroless minor of order $r$ of $\A$. By the definition of minor-rank $\mr(\A)\geq r $. Let $\A_k=\A_{i_1\dots i_k, i_1\dots i_k}$  be a minor of order $k$ of $\A$ with $k>r$. Because there exists a representative matrix $\hat{\A}=(a_{ij})$ of $\A$ such that $\rank(\hat{\A})=r$, we have $\det\big(\hat{\A}_k\big)=\det\big(\hat{\A}_{i_1\dots i_k, i_1\dots i_k}\big)=0$. So $\det \big(A_{i_1\dots i_k, i_1\dots i_k}\big)$  is a neutrix. One concludes that  $\mr(\A)=r$.
	
	As or the second part, knowing that $\mr(\A)=r$, by Theorem \ref{dl1.5} there are at least $r$ linearly independent row vectors in $\A$. On the other hand there exists a representative matrix $\hat{\A}
	$ of $\A$ such that $\rank(\hat{\A})=r$.	 Without loss of generality, we may assume that $\det(\hat{\A}_r)=\det\begin{pmatrix}
	a_{11}& \cdots & a_{1r}\\
	\vdots & \cdots & \vdots\\
	a_{r1} & \cdots & a_{rr}
	\end{pmatrix} \not= 0.$ Let $i\in \{r+1, n\}$. Then the  set  of vectors 
	$$\{a_1=(a_{11}, \dots, a_{1n}), \dots, a_{r}=(a_{r1}, \dots, a_{rn}), a_{i}=(a_{i1}, \dots, a_{in})\}$$ is linearly dependent. By Theorem \ref{md12}.\ref{md12i} the set of vectors
	$$\{\alpha_1=(\alpha_{11},\dots, \alpha_{1n}), \dots, \alpha_{r}=(\alpha_{r1},\dots, \alpha_{rn}), \alpha_i=(\alpha_{i1},\dots, \alpha_{in})\}$$ is linearly dependent. So the row rank is at most $r$. 
	
	Combining we obtain that $r(\A)=r$.
\end{proof}

It follows from the next proposition that, if we define a column-rank by analogy to \ref{hangmatran}.\ref{row}, in the presence of the strict rank it is equal to the row-rank.

\begin{proposition}
	Let $\A=(\alpha_{ij})$ be an $m\times n$ matrix over $\E$. Then $\mr(\A)=\mr(A^T)$.
\end{proposition}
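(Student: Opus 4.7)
The plan is to exploit the already-recorded fact that $\det(B)=\det(B^T)$ for any square matrix over $\E$ (listed among the elementary properties at the start of Section~\ref{section2c3}) and observe that transposition sets up a bijection between $k\times k$ submatrices of $\A$ and $k\times k$ submatrices of $\A^T$ which preserves the value of the determinant. Once this is in place, the statement is immediate from the definition of minor rank.

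More precisely, first I would fix $k\in \{1,\dots, \min(m,n)\}$ and consider the set of $k$-minors of $\A$, indexed by pairs of increasing tuples $1\leq i_1<\cdots<i_k\leq m$ and $1\leq j_1<\cdots<j_k\leq n$. For such a choice, the submatrix $\mathcal{M}$ of $\A$ on rows $i_1,\dots,i_k$ and columns $j_1,\dots,j_k$ has transpose $\mathcal{M}^T$ equal to the submatrix of $\A^T$ on rows $j_1,\dots,j_k$ and columns $i_1,\dots,i_k$. By the transposition-invariance of the determinant for external matrices, $\Delta_{i_1\dots i_k,\,j_1\dots j_k}(\A)=\det(\mathcal{M})=\det(\mathcal{M}^T)=\Delta_{j_1\dots j_k,\,i_1\dots i_k}(\A^T)$ as external numbers.

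Consequently the two collections of external numbers $\{\Delta_{i_1\dots i_k,\,j_1\dots j_k}(\A)\}$ and $\{\Delta_{j_1\dots j_k,\,i_1\dots i_k}(\A^T)\}$ coincide (with indices swapped via the obvious bijection), and in particular a zeroless $k$-minor exists for $\A$ if and only if one exists for $\A^T$. Taking the largest such $k$ on each side, we conclude $\mr(\A)=\mr(\A^T)$.

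I do not expect any obstacle here: the argument reduces to the invariance of $\det$ under transposition, which has already been granted. The only small point worth flagging is that $\mr$ is defined as the largest order of a zeroless minor, so the equality is really a statement about the existence of zeroless entries in two indexed families of external numbers, and the transposition bijection immediately identifies these two families term by term.
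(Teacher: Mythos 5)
Your proposal is correct and follows exactly the paper's route: the paper also deduces the claim from the fact that the determinant of a (sub)matrix equals the determinant of its transpose, with your argument merely making the bijection between $k\times k$ submatrices of $\A$ and of $\A^T$ explicit. No issues to flag.
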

\begin{proof}
	It is a consequence of the fact that the determinant of a (sub)matrix is equal to the determinant of its transpose.
\end{proof}
We end this section by studying several conditions such that the strict rank is well-defined, implying that the minor-rank, the row-rank and the strict rank are equal.
\begin{theorem}\label{strick rank of matrix1}
	Let $\A=(\alpha_{ij})_{m\times n}$ be a matrix over $\E$. Assume that $r(\A)=r$  and there is a zeroless minor of order $r$ of $\A$. Then $\sr(\A)=r.$
\end{theorem}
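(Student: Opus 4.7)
The plan is to verify separately the two conditions for the strict rank: that $\mr(\A) = r$, and that some representative matrix of $\A$ has real rank exactly $r$. The first is immediate: the zeroless minor of order $r$ yields $\mr(\A) \geq r$, while Theorem~\ref{dl1.5} gives $\mr(\A) \leq r(\A) = r$.

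For the construction of a representative matrix of rank $r$, I would assume without loss of generality that the top-left $r \times r$ submatrix $\B$ has zeroless determinant and fix arbitrary representatives $a_{kj} \in \alpha_{kj}$ for $1 \leq k \leq r$, $1 \leq j \leq n$. Because any real evaluation of the Leibniz formula at representatives of $\B$ is an element of $\det(\B)$ and $0 \notin \det(\B)$, no such evaluation vanishes; hence the chosen first $r$ rows of the partial representative matrix are linearly independent in $\R^n$, and any completion $\hat{\A}$ will satisfy $\rank(\hat{\A}) \geq r$. The remaining task is to choose rows $i = r+1, \ldots, m$ so that each lies in the real span of the first $r$.

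Here the hypothesis $r(\A) = r$ enters: for each $i > r$, the set $\{\xi_1, \ldots, \xi_r, \xi_i\}$ is linearly dependent, yielding real scalars $t_1, \ldots, t_r, t$ not all zero and a neutrix vector $N^{(i)}$ with $t_1 \xi_1 + \cdots + t_r \xi_r + t \xi_i = N^{(i)}$. The coefficient $t$ must be non-zero: otherwise $\{\xi_1, \ldots, \xi_r\}$ would itself be linearly dependent, and by Theorem~\ref{md12} there would exist representatives of $\xi_1, \ldots, \xi_r$ whose restrictions to the first $r$ coordinates form a singular representative matrix of $\B$, contradicting zerolessness of $\det(\B)$. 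Setting $s_k = -t_k/t$ and $M^{(i)} = N^{(i)}/t$ converts the relation into the componentwise Minkowski equality $\alpha_{ij} = \beta_j + M^{(i)}_j$, where $\beta_j := s_1 \alpha_{1j} + \cdots + s_r \alpha_{rj}$.

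The main obstacle is showing that the natural candidate $a_{ij} := \sum_k s_k a_{kj}$ really lies in $\alpha_{ij}$ rather than in some enlarged external number. The key observation is that taking neutrix parts of the equality $\alpha_{ij} = \beta_j + M^{(i)}_j$ yields $N(\alpha_{ij}) = N(\beta_j) + M^{(i)}_j$, so in particular $M^{(i)}_j \subseteq N(\alpha_{ij})$; combined with $0 \in M^{(i)}_j$ this gives $\beta_j \subseteq \beta_j + M^{(i)}_j = \alpha_{ij}$. Since $\sum_k s_k a_{kj} \in \beta_j$ by definition of the Minkowski sum, $a_{ij}$ is a genuine representative of $\alpha_{ij}$. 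Using these values for rows $r+1, \ldots, m$ produces a representative matrix $\hat{\A}$ whose lower rows are honest real linear combinations of the first $r$, so $\rank(\hat{\A}) = r$, witnessing $\sr(\A) = r$.
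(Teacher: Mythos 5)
The first half of your argument is sound and uses the same ingredients as the paper: $\mr(\A)=r$ follows from the given zeroless minor together with Theorem~\ref{dl1.5}, every evaluation of the Leibniz formula at representatives lies in $\det(\B)$ so any representative matrix has rank at least $r$, and your argument that $t\neq 0$ is correct.

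The construction of the lower rows, however, has a genuine gap. The step ``converts the relation into the componentwise Minkowski equality $\alpha_{ij}=\beta_j+M^{(i)}_j$'' is not a valid manipulation of external numbers: subtraction is not cancellative ($\beta_j-\beta_j=N(\beta_j)$, not $0$), so from $t_1\xi_{1}+\cdots+t_r\xi_{r}+t\xi_i=N^{(i)}$ you may only conclude the inclusion $\alpha_{ij}\subseteq \beta_j+M^{(i)}_j$. Moreover, since the left-hand side of the relation is a neutrix, its neutrix part is the maximum of the neutrix parts of the summands, which gives $N(\alpha_{ij})\subseteq M^{(i)}_j$ --- the reverse of the inclusion $M^{(i)}_j\subseteq N(\alpha_{ij})$ that your argument needs. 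Consequently $\beta_j\subseteq\alpha_{ij}$ may fail, and the candidate $a_{ij}=\sum_k s_k a_{kj}$ built from representatives fixed in advance need not belong to $\alpha_{ij}$. A concrete instance: take $\xi_1=(1+\oslash,\,1+\oslash)$, $\xi_2=(1,\,1)$, so $r(\A)=\mr(\A)=1$ and $\sr(\A)=1$ (all entries equal to $1$ give a rank-one representative matrix). If you pre-fix $a_{11}=1+\eps$, $a_{12}=1-\eps$ with $\eps\neq 0$ infinitesimal, then row $2$ has the unique representative $(1,1)$ and every completion of your fixed first row has rank $2$; here $\beta_1+M^{(1)}_1=1+\oslash\not\subseteq\{1\}=\alpha_{21}$. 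So the scheme ``fix arbitrary representatives of the first $r$ rows, then extend'' cannot work: the representatives of the independent rows must be chosen together with the witnesses of dependence. This is how the paper proceeds --- it applies Theorem~\ref{md12}.\ref{md12i} to transfer the dependences among the external row vectors to a suitable set of representative vectors and only then forms $\hat{\A}$, instead of imposing consistency with a preassigned choice.
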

\begin{proof}
	A linearly independent set of row vectors of $\A$ has up to $r$ elements, so by Theorem \ref{md12} the same is true for a set of representative vectors $V=\{a_1,\dots, a_m\}$, where $a_i\in \alpha_i=(\alpha_{i1}, \dots, \alpha_{in})$ for all $1\leq i\leq m$. It follows that the rank of the matrix $\hat{\A}=(a_{ij})$ is $r$. Also  there exists a zeroless minor of order $r$ of $\A$, hence $\sr(\A)=r.$
\end{proof}

Let $\A=(\alpha_{ij})_{m\times n}\equiv (a_{ij}+A_{ij})_{m\times n}\in \M_{m,n}(\E)$. It was observed in Section~\ref{section2c3} that only for $ m=n\leq 2 $ there is an obvious relation between the determinants given by Definition~\ref{defdet} and determinants of representatives. So direct conditions, without recurring to the strict rank, for the equality between row rank and minor rank possibly only can be given for matrices of low rank. For rank $ 1 $  Theorem~\ref{relationOfRankIndepent} considers the case that the minor rank is equal to the row rank, and then also equal to the strict rank, and Theorem~\ref{rowminor} the reverse case for rank $ 1 $ or $ 2 $. Observe that at least some element of $ \A $  must be zeroless, and for simplicity we assume that $\alpha_{11}$ is zeroless.

\begin{notation}\rm
	Let $\A=(\alpha_{ij})_{m\times n}\equiv (a_{ij}+A_{ij})_{m\times n}\in \M_{m,n}(\E)$ such that $\mr(\A)=1$. For $ 1\leq i\leq m $ we denote the $ i^{th} $ row vector by $ \alpha_{i} \equiv(\alpha_{i1},\cdots,\alpha_{in})$, and write $\overline{A}_{1}\equiv\displaystyle\max_{ 1\leq i\leq m}A_{i1},$ and  $\underline{A}^C_{1}=\displaystyle\min_{\substack{2\leq j\leq n\\ 1\leq i\leq m}} A_{ij}$. 
\end{notation}

\begin{theorem}
	\label{relationOfRankIndepent}
	Let $\A=(\alpha_{ij})$ be an $m\times n$ reduced matrix over $\E$, with $\alpha_{ij}=a_{ij}+A_{ij}$ for all $1\leq i\leq m$ and $1\leq j\leq n$.
	Assume that and $\mr(\A)=1$ and  $\alpha_{11}$ is zeroless. Suppose 
	that (i) $\dfrac{\oA_{1}}{\alpha_{11}}\subseteq \underline{A}^C_{1}$ for $1\leq i\leq m$, or (ii) all $A_{ij}$ are equal to some neutrix $ A $, where $1\leq i\leq m$ and $1\leq j\leq n$. Then $r(\A) =\sr(\A)=1.$
\end{theorem}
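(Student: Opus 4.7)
The plan is to reduce the whole statement to the single assertion $r(\A)=1$: once that is established, Theorem~\ref{strick rank of matrix1} gives $\sr(\A)=1$ immediately, since $\mr(\A)=1$ already furnishes a zeroless minor of order~$1$. The lower bound $r(\A)\geq 1$ is immediate: $\{\alpha_1\}$ is linearly independent because if $t\alpha_1$ were a neutrix vector for some $t\neq 0$, its first coordinate $t\alpha_{11}$ would be a neutrix, contradicting that $\alpha_{11}$ is zeroless.

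For $r(\A)\leq 1$ I would show that every pair $\{\alpha_1,\alpha_k\}$ with $k\geq 2$ is linearly dependent, using the real scalars $s=a_{k1}$ and $t=-a_{11}$ (not both zero). The $j$-th coordinate of $s\alpha_1+t\alpha_k$ is
$$
\bigl(a_{k1}a_{1j}-a_{11}a_{kj}\bigr)+|a_{k1}|A_{1j}+|a_{11}|A_{kj},
$$
and $\mr(\A)=1$ forces the $2\times 2$ minor $\alpha_{11}\alpha_{kj}-\alpha_{1j}\alpha_{k1}$ to be a neutrix, coinciding with its own neutrix part $\mathcal{N}_{kj}$, so $a_{k1}a_{1j}-a_{11}a_{kj}\in\mathcal{N}_{kj}$. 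Expanding via Definition~\ref{defnam} and using $|a_{ij}|\leq 1+\oslash$ (reducedness) gives
$$
\mathcal{N}_{kj}\subseteq |a_{11}|A_{kj}+|a_{kj}|A_{11}+A_{11}A_{kj}+|a_{1j}|A_{k1}+|a_{k1}|A_{1j}+A_{1j}A_{k1},
$$
of which the first and fifth terms already sit inside the target neutrix $|a_{11}|A_{kj}+|a_{k1}|A_{1j}$; the whole problem thus reduces to absorbing the four remaining terms into this target.

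Under condition~(i) the inclusion $\oA_1/\alpha_{11}\subseteq\underline{A}^C_1$, together with $A_{11},A_{k1}\subseteq\oA_1$ and Proposition~\ref{propnam}.\ref{tcoptiv}, yields $A_{11},A_{k1}\subseteq\alpha_{11}A_{kj}=|a_{11}|A_{kj}$ for every $j\geq 2$, and then $|a_{ij}|\in\pounds$ and $A_{ij}\subseteq\oslash$ force the four extra terms to collapse into $|a_{11}|A_{kj}$. Under condition~(ii) all neutrix parts equal a common $A$, and if $\max(|a_{11}|,|a_{k1}|)$ is not an absorber of $A$ the same absorption goes through verbatim. The main obstacle will be the remaining sub-case of~(ii) in which both $|a_{11}|$ and $|a_{k1}|$ absorb~$A$; there I plan to switch the pivot to an index $\ell$ for which column $\ell$ contains an appreciable representative of row~$1$ or row~$k$ (such an $\ell$ exists because $\A$ is reduced, after at most exchanging rows~$1$ and~$k$), replacing the scalar pair $(a_{k1},-a_{11})$ by $(a_{k\ell},-a_{1\ell})$ and re-running the same computation with the minor $\alpha_{1\ell}\alpha_{kj}-\alpha_{1j}\alpha_{k\ell}$. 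The uniformity of the neutrices in case~(ii) then guarantees the absorption, and combining all sub-cases yields $r(\A)\leq 1$, hence $r(\A)=\sr(\A)=1$.
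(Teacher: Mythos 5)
Your overall strategy --- verifying Definition~\ref{defind} directly with explicit real scalars taken from a pivot column, rather than the paper's column-by-column construction of a linearly dependent pair of representative vectors followed by Theorem~\ref{md12} --- is genuinely different from the paper's, and where you carry it out it is correct: in case (i) the hypothesis gives $\overline{A}_{1}\subseteq |a_{11}|\underline{A}^C_{1}$, and together with $|a_{pq}|\leq 1+\oslash$ and $A_{pq}\subseteq\oslash$ this absorbs all four extra terms of the minor's neutrix into $|a_{11}|A_{kj}$; likewise the sub-case of (ii) in which $\max(|a_{11}|,|a_{k1}|)$ is not an absorber of $A$ works as you say, since then the target neutrix is all of $A$ and the minor's neutrix is contained in $A$.

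The gap is the remaining sub-case of (ii). Reducedness only guarantees an appreciable entry \emph{somewhere} in the matrix; it does not place one in row $1$ or row $k$, so the column $\ell$ you appeal to need not exist, and exchanging rows $1$ and $k$ does not help. For instance, take $A=\eps^{100}\pounds$ and
$$\A=\begin{pmatrix}\eps^{10}+A & \eps+A\\ \eps^{10}+A & \eps+\eps^{95}+A\\ \eps^{9}+A & 1+A\end{pmatrix}.$$
This matrix is reduced, all neutrix parts equal $A$, $\alpha_{11}$ is zeroless, and $\mr(\A)=1$ (all three $2\times 2$ minors are neutrices, e.g.\ the one from rows $1,2$ equals $\eps^{105}+\eps^{101}\pounds=\eps^{101}\pounds$). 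Yet every representative of rows $1$ and $2$ is infinitesimal, so no column with an appreciable representative in these two rows exists, and your default scalars $(a_{21},-a_{11})=(\eps^{10},-\eps^{10})$ give in the second coordinate $-\eps^{105}+\eps^{110}\pounds$, which is not a neutrix. The theorem still holds here (the scalars $(\eps+\eps^{95},-\eps)$ work), which indicates the repair: choose the pivot column $\ell$ maximizing $\max(|a_{1\ell}|,|a_{k\ell}|)$ over all columns. Then for every $j$ the neutrix of $\alpha_{1\ell}\alpha_{kj}-\alpha_{1j}\alpha_{k\ell}$ is contained in $\max(|a_{1\ell}|,|a_{k\ell}|)A$ (the term $A^{2}$ is controlled by $A\subseteq|a_{11}|\oslash$, which follows from $\alpha_{11}$ being zeroless), and this is exactly the neutrix reachable with the scalars $(a_{k\ell},-a_{1\ell})$, which are not both zero since $\max(|a_{1\ell}|,|a_{k\ell}|)\geq |a_{11}|>0$. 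Alternatively, follow the paper: in case (ii) the hypotheses and conclusion are invariant under row and column permutations, so one may assume from the outset that $|\alpha_{11}|=1+A$ is the maximal entry; that appreciable pivot is precisely what the paper uses to keep the ratio of representatives limited when adjusting column $k$.
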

\begin{proof} The result is obvious for $m=1$. Assume that $1<m$. We will show that every set $\{\alpha_1,   \alpha_i\}$ is linearly dependent, where $i\in \{2, \dots, m\}$. In view of Theorem~\ref{strick rank of matrix1} we prove first that there exists a set of representative vectors $$\{a_{1}=(a_{11}, \dots, a_{1n}),  a_{1}=(a_{i1}, \dots, a_{in})\}$$ of  $\{\alpha_1,  \alpha_i\},$ such that the set of vectors $\{a_{1}, a_i\}$ is linearly dependent.	
	To do so, we prove that there is a set of vectors $$\{a_1=(a_{11}, \dots, a_{1n}),  a_{i}=(a_{i1},\dots, a_{in})\},$$ with  $a_{pq}\in \alpha_{pq}, p\in \{1,i\}, q\in \{1, \dots, n\}$  satisfying 
	\begin{equation}\label{equdtn}
	\det\begin{pmatrix}
	a_{11}& a_{1j}\\
	a_{i1} &  a_{ij}
	\end{pmatrix}=0,
	\end{equation}
	for all $j\in \{2, \dots, n\}$.
	
	For $j=2$, because $\mr(\A)=1$ the determinant $$\det\begin{pmatrix}
	\alpha_{11}& \alpha_{12}\\
	\alpha_{i1} & \alpha_{i2}
	\end{pmatrix}$$ is a neutrix. Consequently, there exists $a_{ps}\in \alpha_{ps}$ for all $p\in \{1, i\}, s\in \{1,2\} $ such that
	\begin{equation} \label{fixed element} \det\begin{pmatrix}
	a_{11}& a_{12}\\
	a_{i1} & a_{i2}
	\end{pmatrix}=0.
	\end{equation} 
	Hence formula  \eqref{equdtn}  is true for $j=2$. Let  $k\in \N, 2<k\leq n$ be arbitrary. We need to prove that there is a column $a_k=(a_{1k}, a_{ik})^T$ such that $a_{pk}\in \alpha_{pk}$ for $p\in \{1, i\}$ and 
	\begin{equation}\label{dieukiendoclaptuyetinh}  \det \begin{pmatrix}
	a_{11}&a_{1k}\\
	a_{i1}& a_{ik}
	\end{pmatrix}=0,
	\end{equation} 
	where $a_{11}, a_{i1}$ are defined by \eqref{fixed element}. Again because   $\mr(\A)=1$, the determinant 
	$$\det \begin{pmatrix}
	\alpha_{11}& \alpha_{1k}\\
	\alpha_{i1}  & \alpha_{ik}
	\end{pmatrix}$$ is a neutrix. As a result, there exists a matrix of representatives $\begin{pmatrix}
	a'_{11}& a'_{1k}\\
	a'_{i1} & a'_{ik}
	\end{pmatrix}$ with $a'_{ij}\in \alpha_{ij}$ such that \begin{equation}\label{equphu3n}
	\det \begin{pmatrix}
	a'_{11}& a'_{1k}\\
	a'_{i1} & a'_{ik}
	\end{pmatrix}=0.
	\end{equation}
	
	Case (i): We put 	
	$$d=a_{11},t= \det\begin{pmatrix}
	a_{11} & a'_{1k}\\
	a_{i1} & a'_{ik}
	\end{pmatrix}, $$ 
	and 
	\begin{equation*}\label{equphun} 
	\eps_{11}=a_{11}-a'_{11}, \eps_{i1}=a_{i1}-a'_{i1}, \eps_{ik}= -\dfrac{t}{d}.
	\end{equation*} 	
	
	Observe first that $\eps_{q1}\in A_{q1}$ for all $ q\in \{1,i\}$. We show that also $\eps_{ik}\in A_{ik} $. By \ref{equphu3n} one has 	
	\begin{alignat*}{2}
	t=&\det\begin{pmatrix}
	a'_{11}+\eps_{11} &  a'_{1k}\\
	a'_{i1}+\eps_{i1} &  a'_{ik}
	\end{pmatrix}\\
	=&\det\begin{pmatrix}
	a'_{11} &  a'_{1k}\\
	a'_{i1} &  a'_{ik}
	\end{pmatrix}+ \det\begin{pmatrix}
	\eps_{11} &  a'_{1k}\\
	\eps_{i1} &  a'_{ik}
	\end{pmatrix}=\eps_{11}a'_{ik}-\eps_{i1}a'_{1k}.
	\end{alignat*}
	Because 	$\eps_{p1}\in A_{p1}\subseteq \overline{A}_{1}$ for  $ p\in \{1, i\}$ and $|a'_{hk}|\leq |\alpha_{hk}|\leq 1+\oslash$ for $  h\in \{1, i\}$,  it follows that $t\in \overline{A}_{1}.$ Also  $d=a_{11}\in \alpha_{11}$. We conclude that
	\begin{equation*}   \label{equphu7n}
	\eps_{ik}= -\dfrac{t}{d}\in  \dfrac{\oA_{1}}{d}\subseteq \underline{A}^C_{1}\subseteq A_{ik}.
	\end{equation*} 
	Hence $a_{pk}\in \alpha_{pk}$ for all $p\in \{1, i\}$ with $a_{.k}=(a_{1k}, a_{ik})\equiv(a'_{1k}, a'_{ik}+\eps_{ik})^{T} $. In addition $a_{.k}$ satisfies formula \eqref{dieukiendoclaptuyetinh}, for
	\begin{align*}
	\det \begin{pmatrix}
	a_{11} & a'_{1k}\\
	a_{i1} & a'_{ik}+\eps_{ik}
	\end{pmatrix}&=t+ 	
	\det\begin{pmatrix}
	a_{11} &0\\
	a_{i1} &  \eps_{ik}
	\end{pmatrix}\\
	&=t+\eps_{ik}d=t-\dfrac{t}{d}d=0.
	\end{align*}
	
	Case (ii), Without loss of generality, we assume also that $|\alpha_{11}|$ is maximal. Put $u_1=( a_{11},a_{i1}	) $. The set of column vectors  
	$$\left\{ u'_1=(a'_{11},  a'_{i1})^{T}, 	u'_{k}=(a'_{1k}, a'_{ik})^T\right\} $$
	is linearly dependent. As a consequence, there exist real numbers $s$ and $ (\delta_{11},  \delta_{i1}) \in (A, A) $ such that 
	\begin{alignat}{2}\label{proofdlii} 
	u'_{k}=& su'_{1}	= s(u_1+\delta_1)	= su_1 + s\delta_1
	\end{alignat}
	where $\delta_1\equiv(\delta_{11},  \delta_{i1}) \in (A, A)$ and $s=a'_{1k}/a'_{11}$. Moreover $|s|\leq 1+\oslash$, since $|\alpha_{11}|$ is maximal. So $s\delta_1\in (A, A)$.

	Put $$u_{k}=u'_{k}-s\delta_1\equiv (a_{1k}, a_{ik})^T.$$ Then $a_{qk}\in \alpha_{qk}$ for $ q\in \{1, i\}$. By \eqref{proofdlii} one has $ u_{k}=su_1 $, so $\{u_1, u_{k}\}$ is linearly dependent. Hence
	$$\det\begin{pmatrix}
	a_{11}&  a_{1k}\\
	a_{i1} & a_{ik}
	\end{pmatrix}=0,$$ 
	which amounts again to \eqref{dieukiendoclaptuyetinh}.
	
	In both cases, because $ k $ is arbitrary, formula \eqref{equdtn} holds for $j=2, \dots, n$. 	
	We conclude that the set of vectors $\{a_1, a_p\}$ is linearly dependent. Then  $\{\alpha_1,\alpha_p\}$ is linearly dependent for all $p\in \{2, \dots, m\}$. So $r(\A)=1$ by Theorem \ref{md12}.	
	The last conclusion follows by Theorem \ref{strick rank of matrix1}. 
\end{proof}
%{\color{red} Note that conditions in (i) and (ii) are not the same. For example, the matrix \begin{equation*}
%\A=\begin{pmatrix}
%\eps+\eps\oslash &\eps\oslash\\
%\eps\oslash & \eps+\eps\oslash
%\end{pmatrix} 
%\end{equation*} with $\eps\simeq 0, \eps\not=0,$
%satisfies the condition in (ii), but not in (i). We have also that $r(\A)=2$ because $\mr(\A)=2.$ Of course, there are many matrices which satisfy condition in (i), but not in (ii).} 

%{\color{blue} The following example shows that the conditions are necessary, that is,  even for $\mr(\A)=1$, it may occur that the minor rank is less than te row-rank if a matrix is not square. (note that it holds for r=1 or r=2 without condition if the given matrix is square). 

%Consider the matrx $$\A=\begin{pmatrix}
%	1+\oslash & 1 &1+\eps\\
%	1& 1+\oslash & 1
%	\end{pmatrix}$$ with $\eps\simeq 0, \eps\not=0.$
%Then $\mr(\A)=1$ but $r(\A)=2.$ Indeed, we just need to verify that $r(\A)=2.$ Let $x_1=(1+\lambda, 1, 1+\eps)\in \alpha_{1}, x_2=(1, 1+\nu, 1)\in \alpha_{2}$ where $\lambda, \nu\in\oslash.$ We show that $\{x_1, x_2\}$ is linearly independent. 

%Suppose $\begin{vmatrix}
%1+\lambda &1\\
%1& 1+\nu
%\end{vmatrix}=0.$ It follows that $\nu=\lambda=0$. However, $\begin{vmatrix}
%1& 1+\eps\\
%1& 1
%\end{vmatrix}\not=0$. This means $\{x_1, x_2\}$ is linearly independent. it follows that $\{\alpha_1, \alpha_2\}$ is linearly independent. 

%} 
\begin{theorem}\label{rowminor} Let $\A=(\alpha_{ij})_{m\times n}\in \M_{m\times n}(\E)$. 
	Assume that $r(\A)=r\leq\min\{m, n\}$. If (i) $r=1$ or (ii) $r=2$ and all $A_{ij}$ are equal to some neutrix $A$, then  $\mr(\A)=r.$ As a result, $\sr(\A)=r.$
\end{theorem}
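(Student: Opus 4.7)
The plan is to combine Theorem~\ref{dl1.5}, which already gives $\mr(\A)\leq r(\A)=r$, with a matching lower bound $\mr(\A)\geq r$; once this is in hand, the conclusion $\sr(\A)=r$ falls out immediately from Theorem~\ref{strick rank of matrix1}, applied with the zeroless minor of order $r$ witnessing $\mr(\A)=r$ together with the hypothesis $r(\A)=r$.

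For case~(i), with $r=1$, the argument is short. Since $r(\A)=1$, some singleton row set $\{\alpha_i\}$ is linearly independent, and in particular $\alpha_i$ is not a neutrix vector (otherwise it would be linearly dependent by Proposition~\ref{md16}). Hence $\alpha_i$ has a zeroless entry, which constitutes a zeroless minor of order $1$, yielding $\mr(\A)\geq 1$ as required.

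For case~(ii), with $r=2$ and all $A_{ij}$ equal to a common neutrix $A$, I would argue by contradiction. The case~(i) reasoning gives $\mr(\A)\geq 1$; suppose for contradiction that $\mr(\A)=1$. After permuting rows and columns (which preserves both ranks and the uniform-neutrix hypothesis), I may assume that $\alpha_{11}$ is zeroless and $|\alpha_{11}|=|\overline{\alpha}|$. Picking a representative $\overline{a}\in\alpha_{11}$ and forming $\mathcal{R}=(1/\overline{a})\A$, I obtain a reduced matrix whose entries all share the same neutrix part $A/\overline{a}$, whose $(1,1)$-entry is zeroless, and whose minor rank is still $1$. Theorem~\ref{relationOfRankIndepent} case~(ii) then forces $r(\mathcal{R})=1$; since scaling by a nonzero real preserves linear dependence relations among external vectors, this gives $r(\A)=r(\mathcal{R})=1$, contradicting $r(\A)=2$. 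Hence $\mr(\A)=2$.

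The only mildly delicate step, as I anticipate, is the bookkeeping around the reduction $\mathcal{R}=(1/\overline{a})\A$: I must observe that dividing by a nonzero real scalar sends neutrices to neutrices, rescales every $k\times k$ minor by $1/\overline{a}^{k}$ (so zerolessness of minors is preserved and $\mr(\mathcal{R})=\mr(\A)$), and sends neutrix vectors to neutrix vectors (so $r(\mathcal{R})=r(\A)$). Each verification is a single line, but together they are what legitimately reduces the situation to the hypotheses of Theorem~\ref{relationOfRankIndepent}. No further tools are needed beyond the results already established in this section.
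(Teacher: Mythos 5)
Your proposal is correct and takes essentially the same route as the paper's proof: the upper bound $\mr(\A)\leq r(\A)$ from Theorem~\ref{dl1.5}, a zeroless entry furnishing the lower bound in case (i), a contradiction via part (ii) of Theorem~\ref{relationOfRankIndepent} in case (ii), and finally $\sr(\A)=r$ from Theorem~\ref{strick rank of matrix1}. Your explicit normalization (permuting so that $\alpha_{11}$ is zeroless and maximal, dividing by a representative $\overline{a}$, and checking that this preserves neutrices, minors and the two ranks) merely fills in a reduction step that the paper leaves implicit when it cites Theorem~\ref{relationOfRankIndepent}, so it is a useful detail rather than a different argument.
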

\begin{proof}\begin{enumerate}[(i)]
		
		\item Because the row rank of $\A$ is $1$, some  $\alpha_{pq}$ is zeroless, where $p\in \{1,\dots, m\}, q\in \{1,\dots, n\}$. This implies that $\mr(\A)\geq 1$. Also  $\mr(\A)\leq r(\A)=1$ by Theorem \ref{dl1.5}. It follows that $\mr(\A)=1.$ 
		
		\item By Theorem \ref{dl1.5} it holds that $\mr(\A)\leq r(\A)=2.$ Suppose that all minors of order $2$ are neutricial. Then $\mr(\A)\leq 1.$ If $\mr(\A)=0$, then $r=0$, a contradiction. If $\mr(\A)=1$, by part (ii) of Theorem \ref{relationOfRankIndepent} also $r(\A)=1$, again a contradiction.  Hence there exists a minor of order 2 which is zeroless. This means $\mr(\A)\geq 2$. Combining, we obtain that $\mr(\A)=2$.
	\end{enumerate}
\end{proof}
If it exists, the strict rank is more operational than the other notions, for example it permitted us to prove equality of the minor-rank and the row-rank, and then also the column-rank. In \cite{Nam2} it was helpful in solving singular systems of linear equations with coefficients and second member in terms of external numbers (the flexible systems of \cite{Jus}). By applying Gauss-Jordan elimination we may obtain a upper triangular matrix, with possibly some lines entirely composed by neutrices. If among others the strict rank is well-defined, Theorem 3.11 of \cite{Nam2} allows us to neglect these lines, leading to a closed form for the solution.

\end{document}